\documentclass{amsart}
\usepackage{graphicx} % Required for inserting images
\usepackage{amsmath,amssymb,amsxtra,anysize,adjustbox,xcolor,combelow,hyperref} % Required for inserting images
\usepackage{mathrsfs}  
\usepackage{tikz-cd,comment}
\usetikzlibrary{calc, babel}%
\usetikzlibrary{shapes}%
\usetikzlibrary{patterns}%
\usetikzlibrary{positioning}%
\usetikzlibrary{arrows.meta}
\usetikzlibrary{knots}
\usetikzlibrary{decorations.markings}
\usetikzlibrary{hobby}
\tikzset{curve/.style={settings={#1},to path={(\tikztostart)
    .. controls ($(\tikztostart)!\pv{pos}!(\tikztotarget)!\pv{height}!270:(\tikztotarget)$)
    and ($(\tikztostart)!1-\pv{pos}!(\tikztotarget)!\pv{height}!270:(\tikztotarget)$)
    .. (\tikztotarget)\tikztonodes}},
    settings/.code={\tikzset{quiver/.cd,#1}
        \def\pv##1{\pgfkeysvalueof{/tikz/quiver/##1}}},
    quiver/.cd,pos/.initial=0.35,height/.initial=0}

\usepackage{braket}

\newcommand{\Bqt}{\mathbb{B}_{q,t}}

\newcommand{\C}{\mathbb{C}}

\newcommand{\Z}{\mathbb{Z}}

\DeclareMathOperator{\Rep}{Rep}

\newcommand{\nak}{\mathcal{M}}

\DeclareMathOperator{\Hom}{Hom}
\DeclareMathOperator{\End}{End}
\newcommand{\uk}{\underline{k}}
\newcommand{\ur}{\underline{r}}

\newcommand{\GL}{\mathrm{GL}}
\newcommand{\gl}{\mathfrak{gl}}
\newcommand{\dimvec}{\underline{d}}
\newcommand{\framvec}{\underline{f}}
\newcommand{\dimv}[1]{\dimvec^{(#1)}}
\newcommand{\double}[1]{\mathrm{D}\!\left(#1\right)}
\newcommand{\framed}[1]{#1^{\heartsuit}}
\newcommand{\rep}[2]{\widehat{#1}_{#2}}
\newcommand{\framedrep}[2]{\widehat{#1}_{#2, \heartsuit}}
\newcommand{\prequot}[2]{\mathcal{P}_{#1}\!\left(#2\right)}
\newcommand{\pnak}[2]{\mathcal{M}_{#1}\!\left(#2\right)}
\newcommand{\parnak}[3]{\mathcal{M}^{#1}_{#2}\!\left(#3\right)}
\newcommand{\affnak}[2]{^{0}\!\mathcal{M}_{#1}(#2)}
\newcommand{\can}{\delta}
\newcommand{\comp}{c}
\newcommand{\Hilb}{\mathrm{Hilb}}

\newcommand{\Gr}{\mathrm{Gr}}
\newcommand{\Fl}{\mathrm{Fl}}

\newcommand{\diff}[1]{\mathrm{diff}(#1)}
\newcommand{\dimvector}{\underline{\mathrm{dim}}}

\def\CK{\mathcal{K}}

\newcommand{\BN}{\mathrm{BN}}

\DeclareMathOperator{\spann}{span}

\newcommand{\Imm}{\mathrm{Im}}
\newcommand{\Tr}{\mathrm{Tr}}

\newcommand{\CL}{\mathcal{L}}
\newcommand{\CD}{\mathcal{D}}

\newcommand{\Lie}{\mathrm{Lie}}
\newcommand{\target}{\mathtt{in}}
\newcommand{\source}{\mathtt{out}}

\newcommand{\st}{\mathrm{st}}
\newcommand{\Cartan}{\mathbf{C}}

\DeclareMathOperator{\rk}{rk}
\newcommand{\reg}{\mathrm{reg}}

\usepackage{enumitem}

\usepackage{mathtools}

\numberwithin{equation}{section}

\newtheorem{theorem}{Theorem}[section]
\newtheorem{proposition}[theorem]{Proposition}
\newtheorem{corollary}[theorem]{Corollary}
\newtheorem{lemma}[theorem]{Lemma}

\theoremstyle{definition}
\newtheorem{remark}[theorem]{Remark}

\newtheorem{definition}[theorem]{Definition}
\newtheorem{example}[theorem]{Example}

\title{Carlsson-Mellit algebra for arbitrary quivers}

\author{Nicolle Gonz\'alez}
\address{Department of Mathematics, The University of British Columbia,
Vancouver, BC Canada}
\email{nicolle@math.ubc.ca}

\author{Eugene Gorsky}
\address{Department of Mathematics, University of California Davis\\ One Shields Avenue, Davis CA 95616 USA}
\email{egorskiy@ucdavis.edu}

\author{Jos\'e Simental}
\address{Instituto de Matemáticas, Universidad Nacional Autónoma de México \\ Ciudad Universitaria, CDMX, México }
\email{simental@im.unam.mx}

\title{Smooth correspondences between quiver varieties}

\keywords{Nakajima quiver varieties, Brill-Noether locus}

\date{}

\begin{document}

\begin{abstract}
We introduce a new class of smooth correspondences between Nakajima quiver varieties called split parabolic quiver varieties, and study their properties. We use these correspondences to construct an explicit resolution of singularities of quiver Brill--Noether loci and prove that the latter are  irreducible and Cohen-Macaulay of expected dimension (if non-empty). This generalizes 
the results of Nakajima--Yoshioka and  Bayer--Chen--Jiang for Hilbert schemes of points on surfaces.
\end{abstract}

\maketitle

\section{Introduction}

\subsection{Quiver varieties}
 
Nakajima quiver varieties constitute a cornerstone of geometric representation theory. Initially introduced by Nakajima \cite{nakajima-instantons} in order to geometrically construct integrable irreducible representations of Kac-Moody algebras, their different geometric invariants (such as (equivariant) homology or $K$-theory) afford representations of quantum algebras such as Yangians, quantum affine algebras, or $K$-theoretic Hall algebras of quivers, see e.g. \cite{nakajima-quiver, nakajima-fixed, varagnolo-yangians, NSS}. From a purely geometric point of view, quiver varieties make an important source of examples and simultaneously generalize cotangent bundles to (partial) flag varieties, type A Slodowy varieties, affine Grassmannian slices, ADHM spaces, and the Hilbert scheme of points in the plane, see \cite{ginzburg, maffei, mirkovic-vibornov, nakajima-book}.

A quiver variety $\nak_Q(\dimvec; \framvec)$ depends on the data of a quiver $Q = (Q_0, Q_1, \source, \target)$, where $Q_0$ is the set of vertices, $Q_1$ the set of arrows, and $\source, \target: Q_1 \to Q_0$ the functions that to each vertex associate its source and target vertices, respectively. We take two dimension vectors $\dimvec, \framvec \in \Z_{\geq 0}^{Q_0}$, that we think of as column vectors. The vector $\framvec$ will be referred to as the \emph{framing vector}, while $\dimvec$ will simply be referred to as the \emph{dimension vector}. In a nutshell, the variety $\nak_Q(\dimvec; \framvec)$ is defined to be the GIT Marsden-Weinstein (aka Hamiltonian) reduction of the symplectic vector space of representations of the framed double quiver $\double{\framed{Q}}$ with a natural action of the group $\GL_{\dimvec} := \prod_{v \in Q_0} \GL(d_v)$, see Section \ref{sec:quiver-varieties} for a precise definition and details on the construction. In particular, the GIT construction involves the choice of a \emph{stability condition}, and in this paper we only work with a specific choice of stability condition. 

An important role in the geometric representation theory of quiver varieties \cite{nakajima-km, varagnolo-yangians, NSS} is played by the smooth \emph{Nakajima correspondences} $N_Q(\dimvec; \dimvec - \can_v; \framvec)$ between the two quiver varieties $\nak_Q(\dimvec; \framvec)$ and $\nak_Q(\dimvec - \can_v; \framvec)$, where $\can_v \in \Z_{\geq 0}^{Q_0}$ is the vector that has $1$ in the $v$-th position and zeroes elsewhere. In this paper, we generalize the construction of Nakajima correspondences and define a family of smooth correspondences between the quiver varieties $\nak_Q(\dimvec; \framvec)$ and $\nak_Q(\dimvec - \uk; \framvec)$ for arbitrary $\uk \in \Z^{Q_0}_{\geq 0}$. More precisely, given a tuple of dimension vectors $\dimv{0}, \dots, \dimv{\ell}$ and a framing vector $\framvec$, we define a smooth variety
\[
\pnak{Q}{\dimv{0}, \dots, \dimv{\ell}; \framvec} \hookrightarrow \nak_{Q}(\dimv{0}; \framvec) \times \cdots \times \nak_Q(\dimv{\ell}; \framvec) 
\]
which we call the \emph{split parabolic quiver variety}. When $\dimv{0} = \dimvec$, $\dimv{1} = \dimvec - \can_v$, and the quiver $Q$ has no loops at the vertex $v$, the variety $\pnak{Q}{\dimv{0}, \dimv{1}; \framvec}$ coincides with the Nakajima correspondence $N_Q(\dimvec; \dimvec - \can_v; \framvec)$, see Proposition \ref{prop: Nakajima correspondence intro} for a more general statement in case of loops at $v$.

Our goal in this paper is to study the geometry of the split parabolic quiver varieties $\pnak{Q}{\dimv{0}, \dots, \dimv{\ell}; \framvec}$. In particular, we will show that they are smooth, and show that they provide natural resolutions of singularities of varieties motivated by Brill-Noether theory \cite{Bayer, nakajima-km, yamakawa}. In the  follow-up paper \cite{BqtQ} we  study the geometric representation theory of split parabolic quiver varieties. In particular, their equivariant $K$-theory affords representations of generalizations of the Carlsson-Mellit algebra $\Bqt$ \cite{CM, CGM} to arbitrary quivers. The definition of such a generalization is the main objective of \cite{BqtQ}.

\subsection{Brill-Noether theory for quiver varieties}
Our construction is in part motivated by the Brill-Noether theory which we review first. If $Q$ is a quiver, we denote by $\framed{Q}$ its framed version, and by $\double{\framed{Q}}$ the double of $\framed{Q}$, see Section \ref{sec:quiver-varieties} for details. For $v$ a vertex of $Q$ (i.e., not a framing vertex) we denote by $\C_v$ the one-dimensional representation of $\double{\framed{Q}}$ supported at $v$ where all paths of positive length act by zero.

\begin{definition}\label{def:BN-intro}
Let $\uk \in \Z_{\geq 0}^{Q_0}$ be a dimension vector, and assume $k_v \leq d_v$ for every $v \in Q_0$. The \emph{quiver Brill-Noether locus} $\BN_Q^{\uk}(\dimvec;\framvec)$ is the subvariety of $\nak_Q(\dimvec;\framvec)$ defined by
$$
\BN_Q^{\uk}(\dimvec;\framvec)=\left\{D\in \nak_Q(\dimvec;\framvec)\mid \dim \Hom_{\double{\framed{Q}}}(\C_v,D)\ge k_v\ \mathrm{for\ all}\ v\right\}.
$$
\end{definition}

It is easy to see that $\BN_Q^{\uk}(\dimvec;\framvec)$ is closed in $\nak_Q(\dimvec;\framvec)$. When $\uk=k\delta_v$ is supported at one vertex $v$, the quiver Brill Noether loci were considered in \cite[Section 4]{nakajima-km}. More general Brill-Noether loci can be obtained as intersections of these:
\begin{equation}
\label{eq: BN as intersection}
\BN_Q^{\uk}(\dimvec;\framvec)=\bigcap_{v}\BN_Q^{k_v\delta_v}(\dimvec;\framvec).
\end{equation}

\begin{remark}
\label{rem: Nakajima BN}
    More precisely, \cite[(4.3)]{nakajima-km} defines varieties by bounding  the dimension of $\Hom_{\double{\framed{Q}}}(D, \C_v)$ \emph{from above}.  The change from $\Hom_{\double{\framed{Q}}}(\C_v, D)$ to $\Hom_{\double{\framed{Q}}}(D, \C_v)$ is explained by the choice of stability condition: our choice of stability condition differs from that in \cite{nakajima-km} by a sign. Note also that while the variety $\BN^{\uk}_{Q}(\dimvec; \framvec)$ is closed, those varieties considered in \cite{nakajima-km} are open in $\nak_{Q}(\dimvec; \framvec)$.
\end{remark}

Let $Q_1(v,w)$ denote the set of edges from $v$ to $w$ in the quiver $Q$. In Lemma \ref{lem: k bound} we show that if $D$ is a stable representation then  
\begin{equation}\label{eq:def-k0-intro}
\dim \Hom(\C_v,D)\ge k^0_v,\quad \text{where} \quad k^0_v:=2d_v-\sum _{w}\left(|Q_1(v,w)|+|Q_1(w,v)|\right)d_w-f_v,
\end{equation}
so without loss of generality one can assume $k_v\ge k_v^0$. We can now state our first main result.

\begin{theorem}
\label{thm: BN intro}
Assume that $k_v\ge k_v^0$ for all $v$ and $\BN_Q^{\uk}(\dimvec;\framvec)$ is not empty. Then it is an irreducible  Cohen-Macaulay variety of (expected) dimension 
$$
\dim \BN_Q^{\uk}(\dimvec;\framvec)=\dim \nak_Q(\dimvec;\framvec)-\sum_{v}k_v(k_v-k^0_v).
$$
\end{theorem}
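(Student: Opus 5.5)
We plan to realize $\BN_Q^{\uk}(\dimvec;\framvec)$ as the image of a smooth variety under a proper map, compute dimensions, and deduce the Cohen--Macaulay property from a determinantal description.

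\textbf{Resolution.} Consider the split parabolic quiver variety $\pnak{Q}{\dimvec, \dimvec-\uk; \framvec}$ (the case $\ell=1$). Its points are pairs $(D, D')$ where $D'$ is a stable quotient of $D$ (or sub-object, depending on the stability convention), and the kernel is $\bigoplus_v \C_v^{k_v}$, which is killed by all arrows. Equivalently, a point is a stable $D$ together with a $\dimvec$-graded subspace $K=\bigoplus_v K_v$, $\dim K_v = k_v$, with $K_v\subset \Hom(\C_v,D)$. We take as given that $\pnak{Q}{\dimvec,\dimvec-\uk;\framvec}$ is smooth. The projection $\pi$ to $\nak_Q(\dimvec;\framvec)$ is proper, since its fibers are closed in products of Grassmannians $\prod_v \Gr(k_v, d_v)$. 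Its image is exactly $\BN_Q^{\uk}$.

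\textbf{Dimension count.} Over the open locus where $\dim\Hom(\C_v,D)=k_v$ for all $v$, the map $\pi$ is bijective, so it is birational onto its image. We compute $\dim\pnak{Q}{\dimvec,\dimvec-\uk;\framvec}$ from its description as a bundle over $\nak_Q(\dimvec-\uk;\framvec)$. The fiber parametrizes extensions of $D'$ by $\bigoplus \C_v^{k_v}$, modulo $\prod_v\GL(k_v)$. Using the Euler form, the extension space at $v$ has dimension of the form $k_v\bigl(\dim\mathrm{Ext}^1(D',\C_v)\bigr)$, and $\dim \mathrm{Ext}^1$ is generically computed by $-k^0_v$-type quantities. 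The expected equality is
$$\dim\nak_Q(\dimvec;\framvec)-\sum_v k_v(k_v-k^0_v).$$
This should follow directly from $\dim\nak = \underline{d}\cdot(2\framvec - \Cartan\dimvec)$ together with the fact that $\mathrm{Hom}(D',\C_v)$ vanishes or is controlled by stability. We would verify the identity algebraically. Irreducibility of the image then follows once the resolution is shown to be connected, hence irreducible since it is smooth. We would prove connectedness inductively, factoring through the chain $\dimvec\to\dimvec-\can_v\to\cdots$ as iterated Grassmannian-type bundles over a connected base. Nakajima quiver varieties are connected by Crawley-Boevey.

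\textbf{Cohen--Macaulay.} Locally on $\nak_Q(\dimvec;\framvec)$, $\Hom(\C_v,D)$ is the kernel of a map of vector bundles
$$\phi_v:\mathcal V_v\to \bigoplus_{a:v\to w}\mathcal V_w\oplus\bigoplus_{a:w\to v}\mathcal V_w\oplus \mathcal W_v,$$
using the doubled arrows out of $v$. Stability forces $\phi_v$ to have corank at least $k^0_v$ in a suitable sense, or rather the image lies in the kernel of the moment-map component. So $\BN^{k_v\delta_v}$ is a degeneracy locus $\{\rk\phi_v\le d_v-k_v\}$ of a map $\mathcal V_v\to\mathcal E_v$ with $\rk\mathcal E_v = d_v-k^0_v$, after cutting out by the moment map relation. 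The expected codimension of such a degeneracy locus is
$$k_v\bigl(\rk\mathcal E_v-(d_v-k_v)\bigr)=k_v(k_v-k_v^0).$$
Since the dimension computation shows that $\BN_Q^{\uk}$, as an intersection of these loci, has the expected codimension, the Eagon--Northcott/Hochster--Eagon theorem gives that each locus is Cohen--Macaulay. The intersection in the expected codimension of the product of these determinantal conditions is then also Cohen--Macaulay. This can be done simultaneously by viewing the condition as a single degeneracy locus of $\bigoplus_v\phi_v$ with a block structure, pulled back from a product of generic determinantal varieties, each CM, via a map of the correct codimension.

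\textbf{Main obstacle.} The hardest step is the precise identification of the target bundle $\mathcal E_v$ of rank $d_v-k_v^0$, i.e.\ making the moment-map and stability conditions cut the naive target down by exactly $d_v$ dimensions. This is the same mechanism as in Lemma \ref{lem: k bound}, and it is what makes the codimension exactly $k_v(k_v-k_v^0)$. We would instead rely on the smooth resolution's dimension to certify the codimension and argue CM via the determinantal structure.
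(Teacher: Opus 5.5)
Your resolution via $p_-$, the description of $\BN^{k_v\can_v}_Q(\dimvec;\framvec)$ as the degeneracy locus $\{\rk(M_1\colon \mathcal{V}_v\to\ker M_2)\le d_v-k_v\}$ with $\rk\ker M_2=d_v-k^0_v$, and the Cohen--Macaulay conclusion by generic perfection are the paper's argument (Lemma~\ref{lem: expected dimension} and Theorem~\ref{thm: BN projection 2}). The genuine gap is irreducibility. You describe the resolution as ``iterated Grassmannian-type bundles over a connected base'', with the base a Nakajima variety connected by Crawley-Boevey. That is not correct: $p_+$, like each $P_{\pm,v}$, has jumping fibres and is in general not surjective. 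By Lemma~\ref{lem: second projection}, the image of $p_+$ is $\BN^{\uk'}_Q(\dimvec-\uk;\framvec)$ with $\uk'=\max(\uk+\uk^0[\dimvec-\uk],0)$. The fibre over $\BN^{=\ur}_Q(\dimvec-\uk;\framvec)$ is $\prod_v\Gr(k_v,r_v-k^0_v[\dimvec-\uk])$. For instance, for the Jordan quiver with $f=1$ and $k\ge 2$, the image is $\BN^{k-1}_Q(d-k;1)\subsetneq\Hilb^{d-k}(\C^2)$. So connectedness of the actual base is itself a Brill--Noether irreducibility statement, and the sketch is circular as written.

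What is missing is a simultaneous induction on $|\dimvec|$ for $\pnak{Q}{\dimvec,\dimvec-\uk;\framvec}$ and $\BN^{\uk}_Q(\dimvec;\framvec)$. For $\uk\neq 0$, the base $\BN^{\uk'}_Q(\dimvec-\uk;\framvec)=p_-\bigl(\pnak{Q}{\dimvec-\uk,\dimvec-\uk-\uk';\framvec}\bigr)$ is irreducible by induction, and one must still exclude extra components upstairs. The paper does this with Lemma~\ref{lem: second projection dim preimage}. The $p_+$-preimage of $\BN^{=\ur}_Q(\dimvec-\uk;\framvec)$ has codimension $\sum_v r_v(r_v-k_v-k^0_v[\dimvec-\uk])$, which is positive for $\ur\neq\uk'$. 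Hence the Grassmannian bundle over the open stratum $\BN^{=\uk'}_Q(\dimvec-\uk;\framvec)$ is dense. Your connectedness idea also works once aimed at this base. First, $p_+$ is proper: globally, the variety is the closed locus $\Imm\widehat M_{1,v}\subset\mathscr K_v$ in the relative Grassmannian of the vector bundle $\ker\widehat M_{2,v}$ over $\nak_Q(\dimvec-\uk;\framvec)$. A closed surjection with nonempty connected fibres onto a connected space has connected source, and smooth plus connected gives irreducible. Two smaller points. Birationality of $p_-$ presupposes $\BN^{=\uk}_Q(\dimvec;\framvec)\neq\emptyset$, which you do not justify. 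You do not need it: $\dim\BN^{\uk}_Q(\dimvec;\framvec)\le\dim\pnak{Q}{\dimvec,\dimvec-\uk;\framvec}$ because it is an image, and together with the determinantal lower bound this already forces pure expected dimension. Also, $\dim\pnak{Q}{\dimvec,\dimvec-\uk;\framvec}$ should be read off the tangent complex of Theorem~\ref{thm: smooth}, not from $p_+$ viewed as a bundle. The fibre of $p_+$ is a product of Grassmannians whose size jumps, not a space of dimension $k_v\dim\mathrm{Ext}^1$.
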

\begin{remark}
For $\uk=k\delta_v$ supported at one vertex this is proven in \cite[Proposition 4.5]{nakajima-km}, modulo comparison as in Remark \ref{rem: Nakajima BN}. 
Theorem \ref{thm: BN intro} shows that the intersection  \eqref{eq: BN as intersection} also has the expected dimension.
\end{remark}

For the Hilbert scheme of points on the plane, the Brill-Noether locus admits a more explicit description. If $Q$ is the Jordan quiver then
$$
\BN^k_Q(d;1)=\{I\in \Hilb^n(\C^2)\mid \dim\Hom_{\C[x,y]}(\C,\C[x,y]/I)\ge k\}=
$$
$$
\{I\in \Hilb^n(\C^2)\mid \dim \C\otimes_{\C[x,y]}I\ge k+1\}.
$$
Here $\C=\C[x,y]/(x,y)$ is the skyscraper sheaf at the origin. In this case, Theorem \ref{thm: BN intro} (and its generalization to $\Hilb^d(S)$ for an arbitrary smooth surface $S$) was proved in \cite{Bayer, NakajimaYoshioka2}. 

\subsection{Split parabolic quiver varieties}

To prove Theorem \ref{thm: BN intro}, we introduce a collection of varieties $\pnak{Q}{\dimvec^{(0)},\ldots,\dimvec^{(\ell)};\framvec}$ which we call \emph{split parabolic quiver varieties}. The precise definition is somewhat technical (see Definitions \ref{def: parabolic-variety-1} and \ref{def: parabolic-variety-2}) but the key ingredients are the following:
\begin{itemize}
\item We have $d^{(0)}_v\ge d^{(1)}_v\ge \ldots\ge d^{(\ell)}_v\ge 0$ for all $v$.
\item For each $m=0,\ldots,\ell$, a representation $D^{(m)}$ of $\double{\framed{Q}}$ with vector spaces $D^{(m)}_v$ at each vertex and linear maps $A_e^{(m)},B_e^{(m)},i_v^{(m)},j_v^{(m)}$ satisfying moment map equations and stability conditions.  
\item The dimension vector of $D^{(m)}$ is given by $\dimvec^{(m)}$,  the framing spaces $F_v$ have dimension $f_v$ and are independent of $m$.
\item A collection of surjective maps $\pi^{(m)}_v:D^{(m)}_v\twoheadrightarrow D^{(m+1)}_v$ for all $m=0,\ldots,\ell-1$. We require that $\pi^{(m)}_v$ commute with $A_e^{(m)},B_e^{(m)},i_v^{(m)},j_v^{(m)}$ in appropriate sense.
\item Last but not least, we consider the composition $\pi_v=\pi^{(\ell-1)}_v\circ \cdots \circ \pi^{(0)}_v:D^{(0)}_v\twoheadrightarrow D^{(\ell)}_v$. We require that the operators $A_e^{(0)},B_e^{(0)},j_v^{(0)}$ vanish on the kernel of $\pi_v$ for all $v$.
\end{itemize}

Our main structural result about these varieties reads as follows.

\begin{theorem}
\label{thm: parabolic intro}
If non-empty, $\pnak{Q}{\dimvec^{(0)},\ldots,\dimvec^{(\ell)};\framvec}$ is a smooth connected quasi-projective variety.
\end{theorem}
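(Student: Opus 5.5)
Since $\pnak{Q}{\dimvec^{(0)},\ldots,\dimvec^{(\ell)};\framvec}$ is defined as a closed subvariety of a product of quiver varieties, quasi-projectivity is immediate from the corresponding property of each $\nak_Q(\dimv{m};\framvec)$. The substance is smoothness and connectedness. The plan is to realize the split parabolic variety as a free quotient of a smooth, connected ``parabolic prequotient'' by a product of general linear groups. Concretely, I would fix flags: take $V_v := D^{(0)}_v$ together with a filtration $V_v = K^{(-1)}_v \supseteq \ker(\pi^{(0)}) \subseteq \ker(\pi^{(1)}\circ\pi^{(0)}) \subseteq \dots \subseteq \ker \pi_v$. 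Equivalently, one chooses a splitting $D^{(0)}_v = D^{(\ell)}_v \oplus \bigoplus_{m} W^{(m)}_v$ with $\dim W^{(m)}_v = d^{(m)}_v - d^{(m+1)}_v$.

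\textbf{Step 1: rewrite the data.} The compatibility of the $\pi^{(m)}$ with the maps, together with the vanishing of $A^{(0)},B^{(0)},j^{(0)}$ on $\ker\pi$, says the following. The representation $D^{(0)}$ is determined by $D^{(\ell)}$ plus linear maps landing in the kernel parts. Moreover, each intermediate $D^{(m)}$ is determined by $D^{(0)}$ via the quotient. So the whole datum is a single $\double{\framed{Q}}$-representation on $D^{(0)}$ that preserves the flag of kernels, whose associated graded pieces $\ker\pi$-layers are annihilated by all outgoing arrows. Each $D^{(m)}$ must also satisfy the moment map equation and stability. I would write the space $\widetilde{\mathcal P}$ of such block-triangular data and the moment map $\mu$ restricted to it. The variety is then $\mu^{-1}(0)^{\mathrm{st}}/P$, where $P\subseteq \GL_{\dimv{0}}$ is the parabolic stabilizer of the flag, or equivalently a quotient by $\prod_m \GL_{\dimv{m}}$ after unfolding. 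Freeness of the action follows from stability of $D^{(0)}$ by the usual Nakajima argument (Schur's lemma/stability kills stabilizers).

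\textbf{Step 2: smoothness.} The key is to show that the restricted moment map (valued in the dual of $\Lie P$, or in the relevant quotient) is submersive at stable points. I would do this inductively in $\ell$. By stability of $D^{(\ell)}$, the map $\mu^{(\ell)}$ on the bottom layer is smooth by Nakajima's theorem. Adding a kernel layer $W^{(m)}$ contributes new variables: the incoming maps into $W$, namely the components of $A,B,i$ landing in $W$, and the components of $\mu$ valued in $\Hom(\text{upper layers}, W)$. Those $\mu$-components are linear in the new variables. Their differential is dual to a map whose injectivity is exactly a stability statement: a nonzero element killed by it would give a subrepresentation contained in the kernel, or violating costability. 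So surjectivity is a direct consequence of the stability of each $D^{(m)}$. The dimension count then gives the expected dimension.

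\textbf{Step 3: connectedness.} This is the main obstacle. The natural approach is the tower of projections $\pnak{Q}{\dimv{0},\ldots,\dimv{\ell}} \to \pnak{Q}{\dimv{1},\ldots,\dimv{\ell}}$, using induction with base case $\nak_Q$, which is connected by Crawley-Boevey. Step 2 shows that the fibers are open (stability) subsets of solution spaces of linear equations in the new variables, modulo $\GL(W)$. Given a point downstairs, the new-variable data is a linear space of fixed dimension with a free group action. The stability locus is Zariski open in an irreducible affine space, hence irreducible when nonempty. Since the map is smooth with irreducible fibers over an irreducible base, the total space is irreducible, hence connected. The delicate point is the constancy of the fiber dimension: the rank of the linear equations could jump. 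I would first try to show, using the cohomological bound of Lemma \ref{lem: k bound} applied to each layer, that the rank is forced to be maximal at stable points. If that fails, I would instead deduce connectedness from a $\C^*$-contraction argument, using a scaling on the arrows to flow every point into a proper fixed locus and arguing as for Nakajima varieties.
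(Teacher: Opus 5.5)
\textbf{Step 2 (smoothness).} Your Step~1 matches the paper's parabolic description, and proving submersivity by a dual stability argument is the right idea. However, Steps~2 and~3 both break down at the same point.

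In Step~2 you linearize only in the new variables. Write $M'_{2,v}\colon\widehat\Xi_v\to K_v$ for the components of $A,B,i$ landing in $K_v=\ker\pi_v$, and $\widehat M_{1,v}\colon\widehat D_v\to\widehat\Xi_v$ for $(\widehat A_e,\widehat B_e,\widehat j_v)$. The new block of $\mu$ is $M'_{2,v}\widehat M_{1,v}\in\Hom(\widehat D_v,K_v)$. Its differential in $M'_2$ has dual $C\mapsto \widehat M_{1,v}C$ on $\Hom(K_v,\widehat D_v)$. This dual is injective iff $\ker\widehat M_{1,v}=\Hom(\C_v,\widehat D)=0$. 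That is a \emph{costability} condition, as your own wording signals, and the stability used here (the image of $i$ generates) does not exclude it. For example, every $I\in\Hilb^n(\C^2)$ supported at $0$ has $\Hom(\C_0,\C[x,y]/I)\neq 0$.

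So your argument only gives smoothness over the locus $\Hom(\C_v,\widehat D)=0$ for $k_v>0$, which need not meet $\pnak{Q}{\dimvec,\dimvec-\uk;\framvec}$ at all. Also, if your Step~2 claim held, the rank you worry about in Step~3 could not jump, yet it does. The paper's fix is to differentiate in all variables $(\widehat A,\widehat B,i,\widehat j)$ at once. If $C=(C_v)\in\bigoplus_v\Hom(D_v,\widehat D_v)$ is orthogonal to $\Imm d\mu$, then $C_va_v=0$ and the orthogonality relations make $\ker C$ stable under $X=\widehat X\pi$ and $Y=\widehat Y\pi$. Stability of $D^{(0)}$ itself, not of the layers, then forces $C=0$ (Theorem~\ref{thm: smooth}).

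\textbf{Step 3 (connectedness).} Your proposed remedy, that the rank is maximal at stable points, is false, so the ``smooth map with irreducible fibers'' argument is unavailable. For $\ell=1$, $P_+=p_+$ has image the Brill--Noether locus $\BN^{\ur_0}_Q(\dimvec-\uk;\framvec)$ with $\ur_0=\max(\uk+\uk^0[\dimvec-\uk],0)$. Its fiber over $\BN^{=\ur}_Q(\dimvec-\uk;\framvec)$ is $\prod_v\Gr(k_v,r_v-k^0_v[\dimvec-\uk])$, which jumps with $\ur$. For the Jordan quiver with $f=1$, $d=2$, $k=1$, the map $p_+$ is the blow-up $\mathrm{Bl}_0\C^2\to\C^2$. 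Irreducibility of $\nak_Q(\dimv{1};\framvec)$ is also not the relevant input, since the image is a proper closed BN locus whose irreducibility must itself be proved.

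The paper proceeds as follows.
\begin{enumerate}
\item It reduces to $\ell=1$ via the projection to $\pnak{Q}{\dimv{0},\dimv{\ell};\framvec}$, which \emph{is} a locally trivial partial-flag bundle (Proposition~\ref{prop: flag fibration intro}).
\item For $\ell=1$ it stratifies by $p_+^{-1}(\BN^{=\ur})$ and shows $\dim p_+^{-1}(\BN^{=\ur})=\dim\pnak{Q}{\dimvec,\dimvec-\uk;\framvec}-\sum_v r_v(r_v-k_v-k^0_v[\dimvec-\uk])$. This uses the BN dimension formula from the $p_-$ analysis (Theorem~\ref{thm: BN projection 2}). By equidimensionality from smoothness, only the stratum $\ur=\ur_0$ carries components.
\item It obtains irreducibility of $\BN^{\ur_0}_Q(\dimvec-\uk;\framvec)$ by induction on $|\dimvec|$: by Theorem~\ref{thm: BN} this locus is the birational image under $p_-$ of $\pnak{Q}{\dimvec-\uk,\dimvec-\uk-\ur_0;\framvec}$. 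Crawley-Boevey's theorem enters only at $\uk=0$.
\end{enumerate}
Your $\C^*$-contraction fallback does not substitute for this: it only reduces connectedness to that of the central fiber, which is no easier.

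\textbf{Minor point (quasi-projectivity).} This is not immediate as you state it. The variety is defined as a quotient, and Lemma~\ref{lem: quiver-variety-embedding} only gives injectivity into $\prod_m\nak_Q(\dimv{m};\framvec)$. You need, for instance, that the prequotient is closed and $G$-stable in the vector bundle $\prod_m\Hom(D^{(m)},D^{(m+1)})$ over $\prod_m\mu_m^{-1}(0)^{\st}$. Condition~(3) is closed there because $\pi$ is automatically surjective.
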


\begin{remark}
    In the case when $Q$ is the Jordan quiver, the split parabolic quiver variety has already appeared in work of Nakajima--Yoshioka \cite{NakajimaYoshioka1, NakajimaYoshioka2} and of Bayer--Chen--Jiang \cite{Bayer}, where it is denoted by $\widehat{M}^0(c)$ and $\Hilb^{\dagger}$, respectively. In particular, Theorem \ref{thm: parabolic intro} is proved in \cite{NakajimaYoshioka2} in this case.
\end{remark}

\begin{remark}
It is known \cite{crawley2001geometry} when the Nakajima quiver variety $\nak_Q(\dimvec;\framvec)$ is not empty. In Corollary \ref{cor: nonempty} we give a somewhat involved combinatorial algorithm determining when $\pnak{Q}{\dimvec^{(0)},\ldots,\dimvec^{(\ell)};\framvec}$ is not empty.
\end{remark}

We give an explicit formula for the dimension of $\pnak{Q}{\dimvec^{(0)},\ldots,\dimvec^{(\ell)};\framvec}$, and an explicit description of the tangent space at every point.
The split parabolic quiver varieties generalize classical Nakajima correspondences \cite{nakajima-km} in the following sense.
\begin{proposition}
\label{prop: Nakajima correspondence intro}
Let $N_{Q}(\dimvec,\dimvec-\delta_v;\framvec)$ be the Nakajima correspondence between $\nak_{Q}(\dimvec;\framvec)$ and $\nak_Q(\dimvec-\delta_v;\framvec)$. Then 
$$
N_{Q}(\dimvec,\dimvec-\delta_v;\framvec)\simeq \pnak{Q}{\dimvec,\dimvec-\delta_v;\framvec}\times \C^{2|Q_1(v,v)|}.
$$
\end{proposition}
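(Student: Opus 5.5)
Recall that the Nakajima correspondence $N_Q(\dimvec,\dimvec-\delta_v;\framvec)$ parametrizes pairs $(D,D')$ with $D\in\nak_Q(\dimvec;\framvec)$ and $D'\in\nak_Q(\dimvec-\delta_v;\framvec)$, together with a surjection $D\twoheadrightarrow D'$ of $\double{\framed{Q}}$-representations compatible with the framing. Equivalently, it is a subrepresentation $\ker\subset D$ of dimension $\delta_v$. The kernel is then a one-dimensional representation $L$ supported at $v$. On $L$ all edges to other vertices and $j_v$ vanish, but each loop $e\in Q_1(v,v)$ may act on $L$ by a scalar $a_e$ (through $A_e$) and $b_e$ (through $B_e$).

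By contrast, $\pnak{Q}{\dimvec,\dimvec-\delta_v;\framvec}$ demands that $A_e,B_e,j_v$ vanish on the kernel of $\pi_v$, which forces all these scalars to be zero, so $L\cong\C_v$.

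The plan is to construct an action of the additive group $\C^{2|Q_1(v,v)|}$ on $N_Q(\dimvec,\dimvec-\delta_v;\framvec)$ by shifting the loops,
\[
(A_e,B_e)\mapsto (A_e+a_e\,\mathrm{Id}_{D_v},\,B_e+b_e\,\mathrm{Id}_{D_v}),\qquad e\in Q_1(v,v),
\]
applied simultaneously to $D$ and $D'$. I would check three things about this action:
\begin{enumerate}
\item It preserves the moment map equation. The contribution of a loop $e$ to the component at $v$ is the commutator $[A_e,B_e]$, and adding scalars does not change a commutator.
\item It preserves stability. Stability is phrased via subspaces invariant under all arrows, together with the image of $i$ (or the kernel of $j$). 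A subspace is invariant under $A_e$ if and only if it is invariant under $A_e+a\,\mathrm{Id}$, so the relevant collections of subspaces are unchanged.
\item It commutes with $\pi$, preserves $\ker\pi$, and is $\GL$-equivariant.
\end{enumerate}
Together these show the action descends to both quiver varieties and to the correspondence, where it shifts the scalars $(a_e,b_e)$ of $L$ by the translation vector.

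Define the map $N\to\C^{2|Q_1(v,v)|}$ recording the scalars $(a_e,b_e)_e$ by which the loops act on the line $\ker\pi_v$. This is a regular function: locally choose a generator $x$ of the kernel and read off $A_ex=a_ex$ and $B_ex=b_ex$. The map is equivariant for the translation action. Its zero fiber is exactly the locus where $A_e,B_e$ vanish on $\ker\pi_v$. On the kernel of $\pi_v$, the non-loop edges leaving $v$ and the map $j_v$ automatically vanish, since the kernel is a subrepresentation supported at $v$. Hence the zero fiber is precisely $\pnak{Q}{\dimvec,\dimvec-\delta_v;\framvec}$.

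The isomorphism is then
\[
\pnak{Q}{\dimvec,\dimvec-\delta_v;\framvec}\times\C^{2|Q_1(v,v)|}\to N,\qquad (p,c)\mapsto c\cdot p,
\]
with inverse $n\mapsto\bigl((-\text{scalars}(n))\cdot n,\ \text{scalars}(n)\bigr)$. Both maps are algebraic.

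The main obstacle is making the definitions match precisely. I must verify that Definitions \ref{def: parabolic-variety-1} and \ref{def: parabolic-variety-2}, in the case $\ell=1$, describe exactly the scheme structure of the Nakajima correspondence restricted to the condition on the scalars, rather than merely agreeing on points. I would handle this by working at the level of the prequotients, where both sides are $\GL$-invariant subvarieties of the same space of (representation, surjection) data. The translation isomorphism is defined there and is equivariant, so it descends to the free quotients.

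A secondary point is that shifting must keep the stability condition chosen in this paper, which has the opposite sign to \cite{nakajima-km}. By the invariance argument in item 2 above this holds for either sign. Finally, the convention that $\pi$ lands in $D'$ means $D'$ has the shifted loops, consistent with the fact that applying the shift to both $D$ and $D'$ intertwines $\pi$.
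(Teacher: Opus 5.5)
Your proposal is correct and follows the paper's approach: the kernel of $\pi$ is a line at $v$ on which only the loops at $v$ can act, by scalars $(a_e,b_e)$, and translating the loops by scalars preserves the moment map equations (scalars are central) and stability, so every point of $N_Q(\dimvec,\dimvec-\delta_v;\framvec)$ can be moved into $\pnak{Q}{\dimvec,\dimvec-\delta_v;\framvec}$. Your equivariant scalar map and translation action just make explicit the product decomposition that the paper leaves at ``the result follows.''
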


The next easy observation shows that for many purposes it is sufficient to consider these varieties for $\ell=1$.

\begin{proposition}
\label{prop: flag fibration intro}
The natural projection 
$$
\pnak{Q}{\dimvec^{(0)},\ldots,\dimvec^{(\ell)};\framvec}\to \pnak{Q}{\dimvec^{(0)},\dimvec^{(\ell)};\framvec}
$$
is a locally trivial fibration with fiber the product of partial flag varieties 
$$
\prod_{v}\Fl\left(d_v^{(\ell - 1)} - d_v^{(\ell)}, \dots, d_v^{(1)} - d_v^{(\ell)}; d_v^{(0)} - d_v^{(\ell)} \right).
$$
\end{proposition}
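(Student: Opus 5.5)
The plan is to exhibit the fiber over a point of $\pnak{Q}{\dimvec^{(0)},\dimvec^{(\ell)};\framvec}$ as the space of chains of subspaces inside $\ker\pi_v$, and then to globalize using the tautological bundles.

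Fix a point of $\pnak{Q}{\dimvec^{(0)},\dimvec^{(\ell)};\framvec}$, given by stable data $D^{(0)}, D^{(\ell)}$ and a surjection $\pi_v\colon D^{(0)}_v\twoheadrightarrow D^{(\ell)}_v$. By hypothesis $A^{(0)}_e, B^{(0)}_e, j^{(0)}_v$ vanish on $K_v:=\ker\pi_v$. The first step is to show that lifting this point to $\pnak{Q}{\dimvec^{(0)},\ldots,\dimvec^{(\ell)};\framvec}$ amounts to choosing, for each $v$, a flag of subspaces
\[
K_v = K_v^{(\ell)}\supseteq K_v^{(\ell-1)}\supseteq\cdots\supseteq K_v^{(1)}\supseteq K_v^{(0)}=0, \qquad \dim K_v^{(m)}=d_v^{(0)}-d_v^{(m)}.
$$
Given such a flag, set $D^{(m)}_v:=D^{(0)}_v/K^{(m)}_v$. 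Since the maps $A^{(0)},B^{(0)},j^{(0)}$ kill $K^{(m)}\subseteq K$, they descend to $D^{(m)}$. The map $i^{(m)}$ is defined as the composition $F\to D^{(0)}\to D^{(m)}$. The moment map equation descends because the quotient is by a subrepresentation (all arrows act by zero on it). Conversely, the kernels of the compositions $\pi^{(m-1)}\circ\cdots\circ\pi^{(0)}$ give such a flag, and the compatibility conditions force $D^{(m)}$ to be this quotient up to the $\GL$-action. This is exactly the reason the vanishing condition is imposed on $\ker\pi_v$ for the full composite.

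Next comes stability. The stability condition used in the paper (the sign-reversed one, as in Remark \ref{rem: Nakajima BN}) should be the one under which quotients of a stable representation remain stable. I would check this directly: stability ought to say that no nonzero subrepresentation lies in $\ker j$, or dually, that the image of $i$ generates. In the second case, $D^{(m)}$ is a quotient of $D^{(0)}$, so it is still generated by the image of $i^{(m)}$ and therefore stable. Checking that the paper's chosen stability is the quotient-closed one is the step I expect to need the most care. The fact that it is $\Hom(\C_v,D)$, i.e.\ subobjects, that appear in the Brill--Noether locus supports this. Granting it, the fiber is canonically $\prod_v \Fl\bigl(d_v^{(\ell-1)}-d_v^{(\ell)},\dots,d_v^{(1)}-d_v^{(\ell)};\,d_v^{(0)}-d_v^{(\ell)}\bigr)$, the flags in $K_v$. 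Here I read the dimensions as those of the quotients $K_v/K_v^{(m)}$, which agrees with the stated ordering. I would also verify that the construction is independent of the representatives, so that the fiber really is this flag variety and not a quotient of it.

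For local triviality, note that $K_v$ forms an algebraic vector bundle $\mathcal{K}_v$ of rank $d_v^{(0)}-d_v^{(\ell)}$ on the base: it is the kernel of the surjection between the tautological bundles $\mathcal{V}^{(0)}_v\to\mathcal{V}^{(\ell)}_v$, which have constant rank. The construction above then identifies the total space with the relative partial flag bundle $\prod_v \Fl(\mathcal{K}_v)$ over the base, compatibly with the projection. I would confirm that this identification is an isomorphism of varieties by carrying out the construction at the level of the prequotient spaces before taking the $\GL$-quotient, where the flag choices appear as a free $\GL$-equivariant datum. A flag bundle of a vector bundle is Zariski-locally trivial, since the vector bundle is locally trivial, and this gives the claim.
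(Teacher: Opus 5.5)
Your proposal is correct and follows essentially the argument the paper has in mind; the paper only says the result follows immediately from the definitions. In that argument a point of the source is a point of the target together with a partial flag inside $\ker\pi_v$, and the intermediate quotients are automatically stable by Lemma \ref{lem:stable-quotient}: the stability in Definition \ref{def: stability} is exactly the ``image of $i$ generates'' condition, so the step you flagged as delicate is immediate. Local triviality is your kernel-bundle argument, equivalently the realization of both spaces as quotients of the same space by nested parabolic subgroups $P\subseteq P'$ (the remark generalizing Proposition \ref{prop: isomorphism}), with fiber $P'/P$.
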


As a special case of Proposition \ref{prop: flag fibration intro} we get the following.

\begin{corollary}
\label{cor: full flag intro}
Assume that $\dimvec^{(i)}-\dimvec^{(i+1)}=\delta_{v_i}$ for all $i=0,\ldots,\ell-1$ and some $v_i$. Furthermore, let $\uk=\dimvec^{(0)}-\dimvec^{(\ell)}$. Then the space
$$
\parnak{\uk}{Q}{\dimvec;\framvec}:=\pnak{Q}{\dimvec^{(0)},\ldots,\dimvec^{(\ell)};\framvec}
$$
depends only on $\uk,\dimvec,\framvec$ but not on a specific sequence $\dimvec^{(i)}$. The natural projection
$$
\parnak{\uk}{Q}{\dimvec;\framvec}\to \pnak{Q}{\dimvec,\dimvec-\uk;\framvec}
$$
is a locally trivial fibration with fiber the product of complete flag varieties
$
\prod_{v}\Fl(k_v).
$
\end{corollary}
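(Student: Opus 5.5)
The plan is to deduce both claims of Corollary \ref{cor: full flag intro} from Proposition \ref{prop: flag fibration intro}, applied to the chain $\dimvec^{(0)},\ldots,\dimvec^{(\ell)}$ with $\dimvec^{(0)}=\dimvec$ and $\dimvec^{(\ell)}=\dimvec-\uk$.

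\textbf{Step 1 (the projection).} Since each step lowers one coordinate by one, $\ell=\sum_v k_v$, and at each vertex $v$ the sequence $d^{(m)}_v-d^{(\ell)}_v$ decreases from $k_v$ to $0$. It drops by exactly one at the indices $m$ with $v_m=v$ and stays constant otherwise. Proposition \ref{prop: flag fibration intro} gives a locally trivial fibration
\[
\pnak{Q}{\dimvec^{(0)},\ldots,\dimvec^{(\ell)};\framvec}\to\pnak{Q}{\dimvec,\dimvec-\uk;\framvec}
\]
with fibre $\prod_v \Fl(d_v^{(\ell-1)}-d_v^{(\ell)},\dots,d_v^{(1)}-d_v^{(\ell)};k_v)$. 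Repeated entries impose no extra condition on the flag, because consecutive equal subspaces are forced to coincide. After deleting repetitions the entries are $1,2,\ldots,k_v-1$, so the $v$-factor is the complete flag variety $\Fl(k_v)$.

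\textbf{Step 2 (independence of the sequence).} For this I would unwind the definition rather than quote it. A point of the split parabolic variety is determined by the point $(D^{(0)},\pi)$ of $\pnak{Q}{\dimvec,\dimvec-\uk;\framvec}$ together with the chain of kernels $\ker\pi_v\supseteq \ker(\pi^{(m-1)}_v\circ\dots\circ\pi^{(0)}_v)\supseteq\dots\supseteq 0$ at each vertex. I expect to check the following.

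(a) Each intermediate $D^{(m)}$ is recovered as the quotient of $D^{(0)}$ by these subspaces. This quotient is a well-defined stable $\double{\framed{Q}}$-representation, because $A^{(0)},B^{(0)},j^{(0)}$ vanish on $\ker\pi$.

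(b) Conversely, any choice of flags $(V_v^{\bullet})_v$ inside $\ker\pi_v$ gives a valid point. The vanishing condition makes every subspace of $\ker\pi$ a subrepresentation, so the moment map equations and stability descend to the quotients.

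This identifies the space, for any ordering, with the total space of the bundle of products of complete flags in the tautological bundles $\ker\pi_v$ over $\pnak{Q}{\dimvec,\dimvec-\uk;\framvec}$. Within a single vertex a complete flag is a complete flag; across different vertices the subspaces are independent, so only the multiset of $v_i$, i.e.\ $\uk$, matters. The canonical isomorphisms between different orderings commute with the projections.

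\textbf{Main obstacle.} The key step is (b): showing that quotients by arbitrary subspaces of $\ker\pi_v$ preserve stability and that the intermediate $\pi^{(m)}$ satisfy all the required compatibilities. This is where the condition $A^{(0)}=B^{(0)}=j^{(0)}=0$ on $\ker\pi$ enters. If the paper's proof of Proposition \ref{prop: flag fibration intro} already gives this description of the fibre, Step 2 reduces to observing that the description is symmetric in the ordering of the $v_i$.
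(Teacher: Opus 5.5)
Your proposal is correct and follows essentially the paper's route. Step 2 is the content of Proposition \ref{prop: isomorphism}, which identifies the space with $\Rep^{\uk}_{\double{\framed{Q}}}(\dimvec;\framvec)^{\st}_{0}/P_{\uk}$, equivalently a point of $\nak_Q(\dimvec;\framvec)$ together with complete flags in the joint kernels, with stability of the intermediate quotients supplied by Lemma \ref{lem:stable-quotient}; this visibly depends only on $\uk$, and Step 1 is the specialization of Proposition \ref{prop: flag fibration intro} (where you should also discard the trivial entries $0$ and $k_v$, not just the repetitions).
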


Finally, we prove that split parabolic quiver varieties yield resolutions of singularities for Brill-Noether loci.

\begin{theorem}
\label{thm: BN resolution intro}
As in Theorem \ref{thm: BN intro}, assume that $k_v\ge k^0_v$ for all $v$ and $\BN^{\uk}_{Q}(\dimvec;\framvec)$ is nonempty. Consider the natural projection
$$
p_{-}: \pnak{Q}{\dimvec,\dimvec-\uk;\framvec}\to \nak_Q(\dimvec;\framvec)
$$  
which sends a pair $(D^{(0)},D^{(1)})$ to $D^{(0)}$. Then  $p_{-}$ is a resolution of singularities of the Brill-Noether locus $\BN^{\uk}_{Q}(\dimvec;\framvec)\subset \nak_Q(\dimvec;\framvec)$, which is the image of $p_{-}$.
\end{theorem}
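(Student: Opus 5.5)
We need to show three things about $p_{-}$: it is proper, its image is exactly $\BN^{\uk}_{Q}(\dimvec;\framvec)$, and it is birational onto that image. Smoothness of the source is Theorem \ref{thm: parabolic intro}, so these three facts give a resolution of singularities.

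\textbf{Image.} A point of $\pnak{Q}{\dimvec,\dimvec-\uk;\framvec}$ is a pair $(D^{(0)},D^{(1)})$ with surjections $\pi_v:D^{(0)}_v\twoheadrightarrow D^{(1)}_v$. The operators $A^{(0)}_e,B^{(0)}_e,j^{(0)}_v$ all vanish on $K_v:=\ker\pi_v$, and $\dim K_v=k_v$.
\begin{itemize}
\item Each vector of $K_v$ is therefore annihilated by every outgoing arrow of $\double{\framed{Q}}$. This gives an injection $K_v\hookrightarrow \Hom_{\double{\framed{Q}}}(\C_v,D^{(0)})$, so $p_{-}$ lands in $\BN^{\uk}_Q(\dimvec;\framvec)$.
\item Conversely, let $D\in\BN^{\uk}_Q$. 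For each $v$, choose a $k_v$-dimensional subspace $K_v$ of the socle part $\{x\in D_v : \text{all outgoing arrows kill }x\}$. Then $K=\bigoplus_v K_v$ is a subrepresentation isomorphic to $\bigoplus_v \C_v^{k_v}$. Put $D^{(1)}=D/K$.
\item The quotient still satisfies the moment map equation. It inherits the maps $i$ from $D$, and the quotient of a stable representation by a subrepresentation not meeting the framing stays stable for our stability condition (the image of $i$ still generates). Hence $(D,D/K)$ is a preimage, and the image is all of $\BN^{\uk}_Q$.
\end{itemize}
These verifications should be routine unwinding of Definitions \ref{def: parabolic-variety-1} and \ref{def: parabolic-variety-2}. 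The fibre of $p_{-}$ over $D$ is then $\prod_v \Gr\bigl(k_v,\Hom(\C_v,D)\bigr)$, which is projective.

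\textbf{Properness.} I would realize $\pnak{Q}{\dimvec,\dimvec-\uk;\framvec}$ as a closed subvariety of the relative Grassmannian bundle $\prod_v \Gr(k_v,\mathcal{V}_v)$ over $\nak_Q(\dimvec;\framvec)$, where $\mathcal{V}_v$ is the tautological bundle. The closed conditions are that $K_v$ is killed by the outgoing maps. Since $\prod_v \Gr(k_v,\mathcal{V}_v)$ is proper over $\nak_Q(\dimvec;\framvec)$, so is $p_{-}$. One must check that the quotient $D/K$ is automatically stable (shown above) and that this identification is scheme-theoretic, so that $\pnak{Q}{\dimvec,\dimvec-\uk;\framvec}$ really is this closed subscheme.

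\textbf{Birationality.} By the fibre description, $p_{-}$ is an isomorphism over the open locus $U$ where $\dim\Hom(\C_v,D)=k_v$ for all $v$: there the fibre is a point, and the map is a bijective proper map onto a locus whose closure we still need to control. The main obstacle is showing that $U$ is dense in $\BN^{\uk}_Q$ and that $p_{-}$ is an isomorphism (not merely bijective) over $U$. I would proceed in three steps.
\begin{itemize}
\item Compare dimensions. Using the explicit dimension formula for split parabolic varieties proved later, check that $\dim\pnak{Q}{\dimvec,\dimvec-\uk;\framvec}=\dim\nak_Q(\dimvec;\framvec)-\sum_v k_v(k_v-k^0_v)$. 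The source is connected and smooth, hence irreducible, so its image is irreducible of dimension at most this number.
\item Stratify the image by $r_v=\dim\Hom(\C_v,D)\ge k_v$. The preimage of the stratum with vector $\ur$ fibres over that stratum with fibre $\prod_v\Gr(k_v,r_v)$. It is contained in the image of $\pnak{Q}{\dimvec,\dimvec-\ur;\framvec}$, whose dimension is smaller.
\item Counting dimensions then shows that strata with some $r_v>k_v$ have preimage of strictly smaller dimension than the source. So the generic point of the irreducible source lies over $U$, and $U$ is dense.
\end{itemize}
Finally, I would compute the tangent map of $p_{-}$ at points over $U$ with the paper's tangent space description to get an isomorphism onto its image. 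Smoothness of the source and Zariski's main theorem then identify $p_{-}$ as birational onto $\BN^{\uk}_Q$. This also yields irreducibility and the expected dimension of the Brill--Noether locus, consistent with Theorem \ref{thm: BN intro}.
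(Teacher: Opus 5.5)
Your proposal is correct and follows essentially the paper's argument (Lemma \ref{lem: BN projection 1}, Lemma \ref{lem: dim preimage}, Theorem \ref{thm: BN}). You identify the image and the Grassmannian fibres $\prod_v\Gr(k_v,r_v)$. You then use the count $\sum_v(r_v-k_v)(r_v-k^0_v)>0$, which is where $k_v\ge k^0_v$ enters, to show that strata with $\ur\neq\uk$ have preimage of positive codimension. Finally you conclude that $p_{-}$ is an isomorphism between dense opens lying over $\BN^{=\uk}_Q(\dimvec;\framvec)$. The differences are minor: you bound $\dim\BN^{=\ur}_Q(\dimvec;\framvec)$ by $\dim\pnak{Q}{\dimvec,\dimvec-\ur;\framvec}$ and use irreducibility of the source, whereas the paper uses the exact stratum dimension from Theorem \ref{thm: BN projection 2} and equidimensionality; your relative-Grassmannian and tangent-map arguments also make explicit the properness and the isomorphism over the open stratum, which the paper reads off directly from the fibre description.
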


\begin{remark}
When $Q$ is the Jordan quiver, this is essentially contained in \cite[Proposition 3.23]{NakajimaYoshioka2}
\end{remark}

\subsection{Organization of the paper}

The paper is organized as follows. In Section \ref{sec: geometry} we give two definitions of split parabolic quiver varieties and prove that they are equivalent. Proposition \ref{prop: flag fibration intro} immediately follows from the definitions.
Furthermore, we study properties of split parabolic quiver varieties and in particular prove in Theorem \ref{thm: smooth} that these are always smooth, this thus proves most of Theorem \ref{thm: parabolic intro}.
We also define various natural maps $P_{+},P_{-}$ between  split parabolic quiver varieties, see Lemma \ref{lem: maps}. 

In Section  \ref{sec: BN} we define and study quiver Brill-Noether  loci. The key tool is a pair of projections $p_{+},p_{-}$, see \eqref{eq: def p and q}, which are special cases of $P_{+}$ and $P_{-}$ above. We identify the images of $p_{+}$ and $p_{-}$ and prove that these are stratified Grassmannian fibrations, estimate dimensions of strata and their preimages. Overall, this allows us to prove Theorems \ref{thm: BN intro} and \ref{thm: BN resolution intro}. In Theorem \ref{thm: irreducible} we prove that both quiver Brill-Noether   loci and split parabolic quiver varieties are irreducible, if not empty. As a byproduct of the proof, we show that any quiver Brill-Noether  locus or split parabolic quiver variety is birational to an iterated Grassmannian bundle over an open dense subset in some quiver variety. We finish this section with a sequence of examples of the varieties and maps introduced here. Finally, in Section \ref{sec: compositions-correspondences} we study the compositions of split parabolic quiver varieties when viewed as correspondences that, besides being of intrinsic interest, will form an essential part of the upcoming work \cite{BqtQ}.

\subsection{Future work}

In the follow-up paper \cite{BqtQ} we study the equivariant $K$-theory of the varieties $\parnak{\uk}{Q}{\dimvec;\framvec}$ from Corollary \ref{cor: full flag intro}. More precisely, given an arbitrary quiver $Q$, we define an algebra $\Bqt(Q)$ which generalizes the double Dyck path algebra $\Bqt$ of Carlsson and Mellit \cite{CM,CGM,GGS1,GGS2}. We conjecture that $\Bqt(Q)$ acts on the equivariant (with respect to the torus action from Section \ref{sec:torus-action}) $K$-theory of  $\parnak{\uk}{Q}{\dimvec;\framvec}$ and prove this conjecture in many cases, in particular when the torus action has isolated fixed points. We also investigate the relation between the spherical subalgebra of $\Bqt(Q)$ and the $K$-theoretic Hall algebra of $Q$ \cite{NSS}.

\section*{Acknowledgments}

The authors would like to thank Andrei Negu\cb{t} and Andrei Okounkov for their interest in this work and many valuable suggestions.  Special thanks to Hiraku Nakajima and Alexei Oblomkov for pointing us towards the references \cite{NakajimaYoshioka1,NakajimaYoshioka2}. We would also like to thank Arend Bayer, Hunter Dinkins and Ruslan Maksimau for helpful correspondence.

The work of E. G. was partially supported  by the NSF grant DMS-2302305. The work of J. S. was partially supported by UNAM's PAPIIT Grant IA101526 and SECIHTI Project C-F-2023-G-106.

This material is based upon work supported by the National Science Foundation under Grant No. DMS-1929284 while the authors were in residence at the Institute for Computational and Experimental Research in Mathematics in Providence, RI, during the Categorification and Computation in Algebraic Combinatorics semester program.

\section{Split parabolic quiver varieties}
\label{sec: geometry}

\subsection{Quiver varieties}\label{sec:quiver-varieties} We first carefully review the construction of Nakajima quiver varieties, following \cite{ginzburg}. None of the constructions or results in this subsection are new. 

Let $Q = (Q_0, Q_1, \source, \target)$ be a quiver, with vertex set $Q_0$ and arrow set $Q_1$, and $\source, \target: Q_1 \to Q_0$ the maps that to each arrow associate its source and target vertices, respectively. For $v, w \in Q_0$, we will denote by $Q_1(v,w)$ the set of arrows from $v$ to $w$ in $Q$. Let $\dimvec = (d_v)_{v \in Q_0} \in \Z_{\geq 0}^{Q_0}$, that will be referred to as a \emph{dimension vector}. Fixing vector spaces $(D_v)_{v \in Q_0}$
of dimension $\dim(D_v) = d_v$, we have the \emph{representation space}
\begin{equation}\label{eq:rep-space}
\Rep_Q(\dimvec) = \bigoplus_{e \in Q_1}\Hom\left(D_{\source(e)}, D_{\target(e)}\right).
\end{equation}

We will need several variants of this construction. 

\begin{enumerate}
\item First, we define the \emph{double quiver} $\double{Q}$, that has the same vertex set as $Q$ and all the arrows of $Q$, but we add an arrow $e^{\ast}: w \to v$ for every arrow $e:  v \to w$. Note that:
\[
\Rep_{\double{Q}}(\dimvec) = \bigoplus_{e \in Q_1} \Hom\left(D_{\source(e)}, D_{\target(e)}\right) \oplus \Hom\left(D_{\target(e)}, D_{\source(e)}\right).
\]
We will denote an element of $\Rep_{\double{Q}}$ by $(A, B) = (A_{e}, B_{e})_{e \in Q_1}$.

\item Second, we define the \emph{framed quiver} $\framed{Q}$, as follows. It contains all the vertices and arrows of $Q$, plus extra vertices $\overline{v}$ for $v \in Q$, together with a framing arrow $\overline{v} \to v$. Note that a dimension vector for $\framed{Q}$ consists of a pair of dimension vectors $\dimvec, \framvec$ for $Q$. We call $\dimvec$ the \emph{dimension vector} and $\framvec$ the \emph{framing vector}. Besides the vector spaces $D_v$, we also fix vector spaces $F_v$ of dimension $f_v$, $v \in Q_0$. The representation space is
\[
\Rep_{\framed{Q}}(\dimvec; \framvec) = \bigoplus_{e \in Q_1}\Hom\left(D_{\source(e)}, D_{\target(e)}\right) \oplus \bigoplus_{v \in Q_0} \Hom\left(F_v, D_v\right). 
\]
We denote an element of $\Rep_{\framed{Q}}(\dimvec; \framvec)$ by $(A, i) = (A_{e}, i_v)_{e \in Q_1, v \in Q_0}$. 

\item Last, we combine the two constructions above into the \emph{double framed quiver} $\double{\framed{Q}}$. Note that
\begin{multline}\label{eq:framed-reps-doubled}
\Rep_{\double{\framed{Q}}}( \dimvec; \framvec) = \bigoplus_{e \in Q_1} \left(\Hom
\left(D_{\source(e)}, D_{\target(e)}\right) \oplus \Hom\left(D_{\target(e)}, D_{\source(e)}\right)\right) \oplus \\
\bigoplus_{v \in Q_0}\left(\Hom\left(F_v, D_v\right) \oplus \Hom\left(D_v, F_v\right)\right). 
\end{multline}
We denote an element of $\Rep_{\double{\framed{Q}}}( \dimvec; \framvec)$ by $(A, B, i, j) = (A_{e}, B_{e}, i_v, j_v)_{e \in Q_1, v \in Q_0}$. 
\end{enumerate}

Using the trace pairing, we see that $\Rep_{\double{\framed{Q}}}(\dimvec; \framvec) = T^{\ast}\!\Rep_{\framed{Q}}(\dimvec; \framvec)$, which is a symplectic variety. The group $G_{\dimvec} := \prod_{v \in Q_0} \GL(D_v)$ acts on $\Rep_{\double{\framed{Q}}}(\dimvec; \framvec)$ by symplectomorphisms: if $g = (g_v)_{v \in Q_0} \in \GL_{\dimvec}$ then
\[
g.(A,B,i,j) = \left(g_{\target(e)}A_{e}g_{\source(e)}^{-1}, g_{\source(e)}B_{e}g_{\target(e)}^{-1}, g_vi_v, j_vg_v^{-1}\right).
\]
This action admits a moment map, which is a $G_{\dimvec}$-equivariant map $\mu: \Rep_{\double{\framed{Q}}}( \dimvec; \framvec) \to \gl_{\dimvec} = \bigoplus_{v \in Q_0}\gl(D_v)$ that can be described as follows: the $v$-th component of $\mu(A, B, i, j)$ is
\begin{equation}\label{eq:moment-map-vertex}
\sum_{\substack{e \in Q_1 \\ \target(e) = v}}A_{e}B_{e} - \sum_{\substack{e \in Q_1 \\ \source(e) = v}}B_{e}A_{e} + i_vj_v,
\end{equation}
and we simplify this by simply writing $\mu(A, B, i, j) = [A, B] + ij$. Thus, $\mu^{-1}(0)$ consists of all elements $(A, B, i, j)$ such that \eqref{eq:moment-map-vertex} is zero for every $v \in Q_0$. Note that the action of $G_{\dimvec}$ on $\Rep_{\double{\framed{Q}}}(\dimvec; \framvec)$ preserves $\mu^{-1}(0)$.

\begin{definition}
The \emph{affine quiver variety} is the categorical quotient
\[
\affnak{Q}{\dimvec; \framvec} := \mu^{-1}(0)/\!/G_{\dimvec} = \mathrm{Spec}\left(\C[\mu^{-1}(0)]^{G_{\dimvec}}\right).
\]
\end{definition}

By standard results in geometric invariant theory, $\affnak{Q}{\dimvec; \framvec}$ parametrizes closed $G_{\dimvec}$-orbits in $\mu^{-1}(0)$.  By definition, the regular part of the affine quiver variety $\affnak{Q}{\dimvec; \framvec}^{\reg}$ consists of those closed orbits in $\mu^{-1}(0)$ with trivial stabilizer. We warn the reader that $\affnak{Q}{\dimvec; \framvec}^{\reg}$ may be empty. 

To obtain a space with finer geometric properties, we use geometric invariant theory and restrict to a class of stable representations.

\begin{definition}\label{def: stability}
    We say that an element $(A, B, i, j) \in \Rep_{\double{\framed{Q}}}( \dimvec; \framvec)$ is stable if the only collection of subspaces $(D'_v \subseteq D_v)_{v \in Q_0}$ satisfying:
\begin{enumerate}
\item $A_{e}\left(D'_{\source(e)}\right) \subseteq D'_{\target(e)}$, $B_{e}\left(D'_{\target(e)}\right) \subseteq D'_{\source(e)}$ for every $e \in Q_1$, and
\item $i(F_v) \subseteq D'_v$ for every $v \in Q_0$
\end{enumerate}
is $D'_v = D_v$. We denote by $\Rep_{\double{\framed{Q}}}( \dimvec; \framvec)^{\st}$ the subset consisting of stable representations. It is a Zariski open set in $\Rep_{\double{\framed{Q}}}( \dimvec; \framvec)$. 
\end{definition}

We denote by $\mu^{-1}(0)^{\st}$ the set of stable elements in $\mu^{-1}(0)$. It is well-known (see e.g. \cite[Theorem 5.2.2]{ginzburg}) that the group $G_{\dimvec}$ acts freely on $\mu^{-1}(0)^{\st}$. 

\begin{definition} 
The \emph{Nakajima quiver variety} is
\begin{equation}\label{eq:quiver-variety}
\nak_Q(\dimvec; \framvec) := \mu^{-1}(0)^{\st}/G_{\dimvec}.
\end{equation}
\end{definition}

The variety $\nak_Q(\dimvec; \framvec)$ admits a projective map $\nak_Q(\dimvec; \framvec) \rightarrow \, \affnak{Q}{\dimvec; \framvec}$. By \cite[Theorem 5.2.2]{ginzburg}, when $\nak_Q(\dimvec; \framvec)$ is nonempty, it is a smooth and symplectic algebraic variety of dimension 
\begin{equation}
\label{eq: dim nakajima}
\dim \nak_Q(\dimvec; \framvec)=2\sum_{e \in Q_1}d_{\source{(e)}}d_{\target{(e)}}+2\sum_{v\in Q_0}(d_vf_v-d_v^2).
\end{equation}

It will be useful to rephrase \eqref{eq: dim nakajima} as follows. We define the \emph{Cartan matrix} of $Q$ by
\begin{equation}
\label{eq: def Cartan}
\Cartan=(\Cartan_{v,w})_{v,w \in Q_0},\quad 
\Cartan_{v,w}=2\delta_{vw}-|Q_1(v,w)|-|Q_1(w,v)|.
\end{equation}
Note that $\Cartan$ is manifestly symmetric. Then
\begin{equation}
\label{eq: dim nakajima rephrased}
\dim \nak_Q(\dimvec; \framvec)=
2\framvec^T\dimvec-\dimvec^T\Cartan\dimvec.
\end{equation}
where here and below, if $M$ is a matrix then $M^{T}$ is its transpose. 

\begin{remark}\label{rmk: stability}
    More generally, each character $\theta: \GL_{\dimvec} \to \C^{\times}$ defines a stability condition on the representation space $\Rep_{\double{\framed{Q}}}(\dimvec; \framvec)$ and we have the Nakajima quiver variety
    \[
    ^{\theta}\!\nak_{Q}(\dimvec; \framvec) := \mu^{-1}(0)^{\theta-\st}/G_{\dimvec}.
    \]
    The stability condition in Definition \ref{def: stability} corresponds to the character $\theta(g_{v}) = \prod_{v}\det(g_v)^{-1}$. Taking duals, all the results and constructions in this paper have a straightforward analogue for the opposite stability condition corresponding to the character $\theta^{-1}(g_v) = \prod_{v} \det(g_v)$, see e.g. \cite[Section 5.1.4]{HC-typeA}. It would be interesting to define and study Brill-Noether loci and split parabolic quiver varieties for more general stability conditions, see Remark \ref{rmk: stability-2} below. 
\end{remark}

Finally, we recall an important torus action on $\nak(\dimvec; \framvec)$. We consider the torus
\[
\widetilde{T} := \widetilde{T}_{\framvec} = (\C^{\times}) \times (\C^{\times})^{Q_1} \times \prod_{v \in Q_0}\widetilde{T}_v,
\]
where $\widetilde{T}_v \subseteq \GL(f_v)$ is the torus of diagonal matrices. We denote an element of $\widetilde{T}$ by $(q, (t_e)_{e \in Q_1}, (U_v)_{v \in Q_0})$. The torus $\widetilde{T}$ acts on $\Rep_{\double{\framed{Q}}}( \dimvec; \framvec)$ as follows:
\begin{equation}\label{eq:torus-action}
(q, (t_e)_{e \in Q_1}, (U_v)_{v \in Q_0})\cdot (A, B, i, j) = (t_eA_e, q^{-1}t_e^{-1}B_e, q^{-1}i_vU_v^{-1}, U_vj_v).
\end{equation}
We note that under this action the moment map $\mu$ is homogeneous of weight $q^{-1}$, so the $\widetilde{T}$-action preserves the locus $\mu^{-1}(0)$. It is easy to verify that the $\widetilde{T}$-action preserves the locus of stable representations, and that it commutes with the action of $G_{\dimvec}$. Thus, we obtain an action of $\widetilde{T}$ on $\nak_Q(\dimvec; \framvec)$. Note that this action factors through an action
\[
T = T_{\framvec} \curvearrowright \nak_Q(\dimvec; \framvec)
\]
where $T_{\framvec} = \C^{\times} \times (\C^{\times})^{Q_1} \times \prod_{v \in Q_0}T_v,$
and $T_v$ is the quotient of $\widetilde{T}_v$ by the subgroup of scalar matrices. 

\begin{remark}\label{rmk:representations}
Given a quiver $Q$, one can form its path algebra $\C Q$, that has a basis consisting of all paths in $Q$, including lazy paths of length $0$, and multiplication is bilinearly extended from concatenation of paths. Given a finite-dimensional representation $D$ of $\C Q$, we denote $D_v = \varepsilon_v D$, where $\varepsilon_v \in \C Q$ is the lazy path of length $0$ at $v$. The dimension vector of $D$ is
\[
\dimvector(D) := (\dim_\C D_v)_{v \in Q_0} \in \Z_{\geq 0}^{Q_0}.
\]
Note that $D = \bigoplus_{v \in Q_0} D_v$ and that $\Rep_Q(\dimvec)$ is the algebraic variety parametrizing all representations of $\C Q$ with dimension vector $\dimvec$. Sometimes, we will want to consider the representation $D$ corresponding to an element in $\Rep_Q( \dimvec)$ and we will do so without further comment.

We note that $\mu^{-1}(0) \subseteq \Rep_{\double{Q}}(\dimvec)$ 
is the algebraic variety parametrizing representations of the \emph{preprojective algebra} $\Pi_Q$ of $Q$, which is the quotient of the path algebra $\C \double{Q}$ by the two-sided ideal generated by elements of the form $\sum_{e \in Q_1, \target(e) = v}ee^{\ast} - \sum_{e \in Q_1, \source(e) = v} e^{\ast}e$ where $v \in Q_0$. By definition, the category of preprojective algebra representations is a full subcategory of the category of representations of $\C \double{Q}$ which is closed under subquotients. 

Note that the Nakajima quiver variety $\nak_Q(\dimvec; \framvec)$ does not parametrize representations of the preprojective algebra $\Pi_{\framed{Q}}$, because we are not imposing the preprojective relations at the framing vertices. Nevertheless, $\nak_Q(\dimvec; \framvec)$ parametrizes stable representations of a quotient $\Pi$ of the path algebra $\C\double{\framed{Q}}$, and the category of $\Pi$-representations is a full subcategory of the category of $\C\double{\framed{Q}}$-representations which is closed under subquotients. 
\end{remark}

\subsection{Split parabolic quiver varieties} 
Now let $\uk = (k_v)_{v \in Q_0} \in \Z_{\geq 0}^{Q_0}$ be a vector, and assume that $k_v \leq d_v$ for every $v \in Q_0$. Let $\widehat{d}_v := d_{v} - k_v \geq 0$. Let $\widehat{D}_v$ be a vector space of dimension $\widehat{d}_v$, and fix a surjective map
\[
\pi_v: D_v \twoheadrightarrow \widehat{D}_v.
\]
Let $K_v := \ker(\pi_v)$, so that $\dim(K_v) = k_v$, and let $K := \bigoplus_{v \in Q_0} K_v$. We will consider the vector space
\[
\Rep^{\uk}_{\double{\framed{Q}}}(\dimvec; \framvec) = \bigoplus_{e \in Q_1}\Hom\left(\widehat{D}_{\source(e)}, D_{\target(e)}\right) \oplus \Hom\left(\widehat{D}_{\target(e)}, D_{\source(e)}\right) \oplus \bigoplus_{v \in Q_0} \Hom\left(F_v, D_v\right) \oplus \Hom\left(\widehat{D}_v, F_v\right). 
\]
We will denote an element of $\Rep^{\uk}_{\double{\framed{Q}}}\left(\dimvec; \framvec\right)$ by $\left(\widehat{A}_e, \widehat{B}_e, i_v, \widehat{j}_v\right).$

\begin{lemma}\label{lem: geometric embedding}
The map
\begin{equation}\label{eq: geometric embedding}
\begin{array}{rcl}
\iota: \Rep_{\double{\framed{Q}}}^{\uk}\left(\dimvec; \framvec\right) & \to & \Rep_{\double{\framed{Q}}}\left(\dimvec; \framvec\right) \\
\left(\widehat{A}_e, \widehat{B}_e, i_v, \widehat{j}_v\right) & \mapsto &  \left(\widehat{A}_e\pi_{\source(e)}, \widehat{B}_e\pi_{\target(e)}, i_v, \widehat{j}_v\pi_v\right)
\end{array}
\end{equation}
is injective, and its image consists of the set of all $\double{\framed{Q}}$-modules $D \oplus F$ that admit $K \oplus 0$ as a subrepresentation, where the latter is the representation where all paths of positive length act by zero.
\end{lemma}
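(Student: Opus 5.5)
The proof is linear algebra, resting on one observation: precomposition with a surjection is injective, and its image consists exactly of the maps vanishing on the kernel. Concretely, for vector spaces $U, W$ and the surjection $\pi_v: D_v \twoheadrightarrow \widehat{D}_v$ with kernel $K_v$, the map
\[
\Hom\left(\widehat{D}_v, W\right) \to \Hom\left(D_v, W\right), \qquad \phi \mapsto \phi\pi_v,
\]
is injective, since $\pi_v$ is surjective. Its image is $\{\psi \in \Hom(D_v, W) \mid \psi(K_v) = 0\}$: any such $\psi$ factors uniquely through $D_v/K_v \cong \widehat{D}_v$. The map $\iota$ is a direct sum of such maps, one for each component $\widehat{A}_e$, $\widehat{B}_e$ and $\widehat{j}_v$, together with the identity on the $i_v$ components. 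Injectivity of $\iota$ follows componentwise.

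Next I identify the image of $\iota$. By the factorization statement, the image consists of all tuples $(A_e, B_e, i_v, j_v)$ satisfying
\[
A_e\bigl(K_{\source(e)}\bigr) = 0, \qquad B_e\bigl(K_{\target(e)}\bigr) = 0, \qquad j_v(K_v) = 0,
\]
for all $e \in Q_1$ and $v \in Q_0$. The maps $i_v$ are unconstrained.

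It remains to match this with the subrepresentation condition. Consider the subspace $K \oplus 0 \subseteq D \oplus F$, with $K_v$ at vertex $v$ and $0$ at each framing vertex. It is a $\double{\framed{Q}}$-subrepresentation on which all positive-length paths act by zero exactly when every arrow sends it to zero. The relevant arrows are:
\begin{enumerate}
\item $A_e$ sends $K_{\source(e)}$ to zero;
\item $B_e$ sends $K_{\target(e)}$ to zero;
\item $j_v$ sends $K_v$ to zero;
\item $i_v$ restricted to the zero subspace of $F_v$, which is automatic.
\end{enumerate}
Vanishing of all arrows gives vanishing of all positive-length paths, since these are compositions of arrows. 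Conversely, if positive-length paths act by zero on $K \oplus 0$, then in particular each arrow does. So this is precisely the condition describing the image.

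There is a small interpretive point. The phrase "admit $K\oplus 0$ as a subrepresentation, where the latter is the representation where all paths of positive length act by zero" should be read as: the fixed subspace $K \oplus 0$ is stable under the action, and the induced action on it is trivial. With that reading the two conditions coincide. No step is a genuine obstacle; the only care needed is getting the sources and targets of $A_e$ and $B_e$ right and checking that the framing arrows $i_v$ impose no condition.
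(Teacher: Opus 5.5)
Your proposal is correct and matches the paper's proof: injectivity comes from surjectivity of the $\pi_v$, and the image is characterized by $A_e$, $B_e$, $j_v$ vanishing on $K$, equivalently by these maps factoring through $D/K \cong \widehat{D}$. Your reading of ``subrepresentation'' as a stable subspace on which the induced action is trivial is the one the paper uses as well.
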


\begin{proof}
    The map $\iota$ is injective because all maps $\pi_{v}$ are surjective. If $(A, B, i, j)$ is in the image of $\iota$, then for every edge $e \in Q_1$,
    \[
    A_e\left(K_{\source(e)}\right) = \widehat{A}_e\pi_{\source(e)}\left(K_{\source(e)}\right) = 0, \quad B_e\left(K_{\target(e)}\right) = 0, \quad j_v(K_v) = 0,
    \]
    so that $K \oplus 0$ is indeed a subrepresentation of $D \oplus F$. Conversely, if $K \oplus 0$ is a subrepresentation of $D \oplus F$ then, for every $e \in Q_1$, $A_e$ factors through $D_{\source(e)}/K_{\source(e)} \cong \widehat{D}_{\source(e)}$, $B_e$ factors through $D_{\target(e)}/K_{\target(e)} \cong \widehat{D}_{\target(e)}$, and $j_v$ factors through $D_v/K_v \cong \widehat{D}_v$, so it falls in the image of $\iota$. 
\end{proof}

Using the surjection $\pi_v$, we have an embedding $\Hom\left(\widehat{D}_v, D_v\right) \hookrightarrow \mathfrak{gl}(D_v)$. Note that $\mu \circ \iota: \Rep_{\double{\framed{Q}}}(\dimvec; \framvec) \to \mathfrak{gl}_{\dimvec}$ has its image contained in $\bigoplus_{v} \Hom\left(\widehat{D}_v, D_v\right)$. Also note that $\iota\left(\widehat{A}, \widehat{B}, i, \widehat{j}\right)$ belongs to $\mu^{-1}(0)$ if and only if, for every vertex $v \in Q_0$,
\[
\sum_{\substack{e \in Q_1 \\ \target(e) = v}} \widehat{A}_e\pi_{\source(e)}\widehat{B}_e - \sum_{\substack{e \in Q_1 \\ \source(e) = v}} \widehat{B}_e\pi_{\target(e)}\widehat{A}_e + i_v\widehat{j}_v = 0 \in \Hom\left(\widehat{D}_v, D_v\right).
\]

\begin{definition}
We let
\[
\Rep_{\double{\framed{Q}}}^{\uk}\left(\dimvec; \framvec\right)^{\st}_{0} := \iota^{-1}\left(\mu^{-1}(0)^{\st}\right)
\]
be the set consisting of those elements $\left(\widehat{A}, \widehat{B}, i, \widehat{j}\right)$ such that $\iota\left(\widehat{A}, \widehat{B}, i, \widehat{j}\right)$ is stable and satisfies the moment map equations \eqref{eq:moment-map-vertex}. 
\end{definition}

For each $v \in Q_0$, we fix a basis of $K_v$, that we complete to a basis of $D_v$, so that we identify the group $\GL(D_v)$ with $\GL(d_v)$, the group of invertible $d_v\times d_v$-matrices. Inside $\GL(d_v)$, we consider the group $P_v$ consisting of $(k_v, d_v - k_v)$-block upper triangular matrices, with upper triangular $k_v \times k_v$-block. We let $P_{\uk} := \prod_{v \in Q_0} P_v \subseteq \GL_{\dimvec}$. 

Note that, since $P_v$ preserves $K_v$ for each $v \in Q_0$, it follows from Lemma \ref{lem: geometric embedding} that the image of $\iota: \Rep_{\double{\framed{Q}}}^{\uk}\left(\dimvec; \framvec\right) \to \Rep_{\double{\framed{Q}}}\left(\dimvec; \framvec\right)$ is stable under the action of $P_{\uk}$. We can also give a formula for the action of $P_{\uk}$ on $\Rep_{\double{\framed{Q}}}^{\uk}\left(\dimvec; \framvec\right)$. Note that we have a well-defined map $P_{v} \to \GL(\widehat{D}_v)$, $g _v\mapsto \widehat{g_v}$ satisfying $\pi_v g_v = \widehat{g_v}\pi_v$. Thus, defining the action $P_{\uk} \curvearrowright \Rep_{\double{\framed{Q}}}^{\uk}\left(\dimvec; \framvec\right)$ by
\[
(g_v) \cdot \left(\widehat{A}, \widehat{B}, i, \widehat{j}\right) = \left(g_{\target(e)}\widehat{A}_e\widehat{g_{\source(e)}}^{-1}, g_{\target(e)}\widehat{B}_e\widehat{g_{\source(e)}}^{-1}, g_vi_v, \widehat{j}_v\widehat{g_v}^{-1}\right) 
\]
makes the map $\iota$ $P_{\uk}$-equivariant. We are ready to define the varieties of interest.

\begin{definition}\label{def: parabolic-variety-1}
    We define the \emph{split parabolic quiver variety} 
    \[
    \parnak{\uk}{Q}{\dimvec; \framvec} := \Rep^{\uk}_{\double{\framed{Q}}}\left(\dimvec; \framvec\right)^{\st}_{0}/P_{\uk} \cong \left(\iota\left(\Rep^{\uk}_{\double{\framed{Q}}}(\dimvec; \framvec)\right) \cap \mu^{-1}(0)^{\st}\right)\!\Big/ P_{\uk}. 
    \]
\end{definition}

Note that the group $P_{\uk}$ is not reductive, so we will need some work to show that the variety $\parnak{\uk}{Q}{\dimvec; \framvec}$ is indeed well-defined. In fact, we will give an alternative (equivalent) definition that, although significantly more involved, shows that $\parnak{\uk}{Q}{\dimvec; \framvec}$ can also be constructed as a quotient of a quasi-projective algebraic variety by the action of a reductive group.

\subsection{Alternative definition} To give an alternative definition of $\parnak{\uk}{Q}{\dimvec; \framvec}$ we consider several auxiliary quivers, that are very similar to those considered in \cite[Section 2]{Maksimau}, except that we consider framed versions. Let $\ell \geq 0$. 

\begin{enumerate}
    \item We consider the $\ell$-th repetition quiver $\rep{Q}{\ell}$. Its set of vertices is $Q_0 \times \{0,1, \dots, \ell\}$. For each arrow $e: v \to w$ in $Q$, there are arrows $(e, 0): (v, 0) \to (w,0), \dots, (e, \ell): (v, \ell) \to w(\ell)$ in $\rep{Q}{\ell}$. Finally, for every $v \in Q_0$ and $i = 0, \dots, \ell -1$, there is an arrow $(p_v, i): (v, i) \to (v, i+1)$.
    \item We also consider the $\ell$-th framed repetition quiver $\framedrep{Q}{\ell}$. 
    Its arrows and vertices include those of the repetition quiver $\rep{Q}{\ell}$, together with an extra set of (framing) vertices $\overline{Q_0} = \{\overline{v} \mid v \in Q_0\}$, and arrows $\overline{v} \to (v,i)$ and $(v, i) \to \overline{v}$ for every $i = 0, \dots, \ell$. 
    \item Finally, we consider the  $\ell$-th framed repetition quiver of the double quiver $\framedrep{\double{Q}}{\ell}$. Note that there is no arrow opposite to the arrows $(p_v, i)$ in the repetition quiver.   
\end{enumerate}

\begin{remark}
We note that the quiver $\framedrep{Q}{\ell}$ does \emph{not} coincide with the quiver $(\rep{Q}{\ell})^{\heartsuit}$, or with the quiver $\rep{(\framed{Q})}{\ell}$. For example, if $Q$ is the quiver with a single vertex and no arrows we have:
\begin{center}
    \begin{tikzcd}
\node at (0,0) {\framedrep{Q}{1} = };
\node at (1,1) {\square};
\node at (1,0) {\bullet};
\node at (1, -1) {\bullet};
\draw[->, out=225, in=135] (1,0.9) to (1,0.2);
\draw[->, out=45, in=315] (1,0.2) to (1,0.9);
\draw[->, out=225, in=135] (1,0.9) to (0.9,-0.8);
\draw[->, out=45, in=315] (1.1, -0.8) to (1.1,0.9);

\draw[->] (1, -0.1) to (1, -0.8);

\node at (3,0) {(\rep{Q}{1})^{\heartsuit} = };
\node at (4, 0.5) {\bullet}; \node at (5, 0.5) {\square};
\node at (4, -0.5) {\bullet}; \node at (5, -0.5) {\square};
\draw[->] (4, 0.4) to (4, -0.3);
\draw[->] (4.8, 0.6) to (4.2, 0.6);
\draw[->] (4.8, -0.4) to (4.2, -0.4);

\node at (7,0) {\widehat{(\framed{Q})}_{1} =};
\node at (8, 0.5) {\bullet}; \node at (9, 0.5) {\square};
\node at (8, -0.5) {\bullet}; \node at (9, -0.5) {\square};
\draw[->] (8, 0.4) to (8, -0.3);
\draw[->] (9, 0.4) to (9, -0.2);
\draw[->] (8.8, 0.6) to (8.2, 0.6);
\draw[->] (8.8, -0.4) to (8.2, -0.4);
    \end{tikzcd}
\end{center}
\end{remark}

\begin{example}
Consider the quiver $Q = \bullet \longrightarrow \bullet \longleftarrow \bullet$. The quiver $\rep{Q}{2}$ is
\[
\begin{tikzcd}
\bullet \arrow{rr} \arrow{d} & & \bullet \arrow{d} & & \bullet \arrow{ll} \arrow{d}
\\
\bullet \arrow{rr} \arrow{d} & & \bullet \arrow{d} & & \bullet \arrow{ll} \arrow{d}
\\
\bullet \arrow{rr}  & & \bullet  & & \bullet \arrow{ll} 
\end{tikzcd}
\]
while the quiver $\framedrep{Q}{2}$ is 
\[\begin{tikzcd}
	\square && \square && \square \\
	\bullet && \bullet && \bullet \\
	\bullet && \bullet && \bullet \\
	\bullet && \bullet && \bullet
	\arrow[curve={height=6pt}, from=1-1, to=2-1]
	\arrow[curve={height=12pt}, from=1-1, to=3-1]
	\arrow[curve={height=18pt}, from=1-1, to=4-1]
	\arrow[curve={height=6pt}, from=1-3, to=2-3]
	\arrow[curve={height=12pt}, from=1-3, to=3-3]
	\arrow[curve={height=18pt}, from=1-3, to=4-3]
	\arrow[curve={height=6pt}, from=1-5, to=2-5]
	\arrow[curve={height=12pt}, from=1-5, to=3-5]
	\arrow[curve={height=18pt}, from=1-5, to=4-5]
	\arrow[curve={height=6pt}, from=2-1, to=1-1]
	\arrow[from=2-1, to=2-3]
	\arrow[from=2-1, to=3-1]
	\arrow[curve={height=6pt}, from=2-3, to=1-3]
	\arrow[from=2-3, to=3-3]
	\arrow[curve={height=6pt}, from=2-5, to=1-5]
	\arrow[from=2-5, to=2-3]
	\arrow[from=2-5, to=3-5]
	\arrow[curve={height=12pt}, from=3-1, to=1-1]
	\arrow[from=3-1, to=3-3]
	\arrow[from=3-1, to=4-1]
	\arrow[curve={height=12pt}, from=3-3, to=1-3]
	\arrow[from=3-3, to=4-3]
	\arrow[curve={height=12pt}, from=3-5, to=1-5]
	\arrow[from=3-5, to=3-3]
	\arrow[from=3-5, to=4-5]
	\arrow[curve={height=18pt}, from=4-1, to=1-1]
	\arrow[from=4-1, to=4-3]
	\arrow[curve={height=18pt}, from=4-3, to=1-3]
	\arrow[curve={height=18pt}, from=4-5, to=1-5]
	\arrow[from=4-5, to=4-3]
\end{tikzcd}\]
and the quiver $\framedrep{\double{Q}}{2}$ is

\[\begin{tikzcd}
	\square && \square && \square \\
	\textcolor{rgb,255:red,92;green,92;blue,214}{\bullet} && \textcolor{rgb,255:red,92;green,92;blue,214}{\bullet} && \textcolor{rgb,255:red,92;green,92;blue,214}{\bullet} \\
	\textcolor{rgb,255:red,214;green,92;blue,92}{\bullet} && \textcolor{rgb,255:red,214;green,92;blue,92}{\bullet} && \textcolor{rgb,255:red,214;green,92;blue,92}{\bullet} \\
	\textcolor{rgb,255:red,214;green,92;blue,214}{\bullet} && \textcolor{rgb,255:red,214;green,92;blue,214}{\bullet} && \textcolor{rgb,255:red,214;green,92;blue,214}{\bullet}
	\arrow[color={rgb,255:red,92;green,92;blue,214}, curve={height=6pt}, from=1-1, to=2-1]
	\arrow[color={rgb,255:red,214;green,92;blue,92}, curve={height=12pt}, from=1-1, to=3-1]
	\arrow[color={rgb,255:red,214;green,92;blue,214}, curve={height=18pt}, from=1-1, to=4-1]
	\arrow[color={rgb,255:red,92;green,92;blue,214}, curve={height=6pt}, from=1-3, to=2-3]
	\arrow[color={rgb,255:red,214;green,92;blue,92}, curve={height=12pt}, from=1-3, to=3-3]
	\arrow[color={rgb,255:red,214;green,92;blue,214}, curve={height=18pt}, from=1-3, to=4-3]
	\arrow[color={rgb,255:red,92;green,92;blue,214}, curve={height=6pt}, from=1-5, to=2-5]
	\arrow[color={rgb,255:red,214;green,92;blue,92}, curve={height=12pt}, from=1-5, to=3-5]
	\arrow[color={rgb,255:red,214;green,92;blue,214}, curve={height=18pt}, from=1-5, to=4-5]
	\arrow[color={rgb,255:red,92;green,92;blue,214}, curve={height=6pt}, from=2-1, to=1-1]
	\arrow[color={rgb,255:red,92;green,92;blue,214}, curve={height=-6pt}, from=2-1, to=2-3]
	\arrow[from=2-1, to=3-1]
	\arrow[color={rgb,255:red,92;green,92;blue,214}, curve={height=6pt}, from=2-3, to=1-3]
	\arrow[color={rgb,255:red,92;green,92;blue,214}, curve={height=-6pt}, from=2-3, to=2-1]
	\arrow[color={rgb,255:red,92;green,92;blue,214}, curve={height=-6pt}, from=2-3, to=2-5]
	\arrow[from=2-3, to=3-3]
	\arrow[color={rgb,255:red,92;green,92;blue,214}, curve={height=6pt}, from=2-5, to=1-5]
	\arrow[color={rgb,255:red,92;green,92;blue,214}, curve={height=-6pt}, from=2-5, to=2-3]
	\arrow[from=2-5, to=3-5]
	\arrow[color={rgb,255:red,214;green,92;blue,92}, curve={height=12pt}, from=3-1, to=1-1]
	\arrow[color={rgb,255:red,214;green,92;blue,92}, curve={height=-6pt}, from=3-1, to=3-3]
	\arrow[from=3-1, to=4-1]
	\arrow[color={rgb,255:red,214;green,92;blue,92}, curve={height=12pt}, from=3-3, to=1-3]
	\arrow[color={rgb,255:red,214;green,92;blue,92}, curve={height=-6pt}, from=3-3, to=3-1]
	\arrow[color={rgb,255:red,214;green,92;blue,92}, curve={height=-6pt}, from=3-3, to=3-5]
	\arrow[from=3-3, to=4-3]
	\arrow[color={rgb,255:red,214;green,92;blue,92}, curve={height=12pt}, from=3-5, to=1-5]
	\arrow[color={rgb,255:red,214;green,92;blue,92}, curve={height=-6pt}, from=3-5, to=3-3]
	\arrow[from=3-5, to=4-5]
	\arrow[color={rgb,255:red,214;green,92;blue,214}, curve={height=18pt}, from=4-1, to=1-1]
	\arrow[color={rgb,255:red,214;green,92;blue,214}, curve={height=-6pt}, from=4-1, to=4-3]
	\arrow[color={rgb,255:red,214;green,92;blue,214}, curve={height=18pt}, from=4-3, to=1-3]
	\arrow[color={rgb,255:red,214;green,92;blue,214}, curve={height=-6pt}, from=4-3, to=4-1]
	\arrow[color={rgb,255:red,214;green,92;blue,214}, curve={height=-6pt}, from=4-3, to=4-5]
	\arrow[color={rgb,255:red,214;green,92;blue,214}, curve={height=18pt}, from=4-5, to=1-5]
	\arrow[color={rgb,255:red,214;green,92;blue,214}, curve={height=-6pt}, from=4-5, to=4-3]
\end{tikzcd}\]
Note that for each of the colors blue, red, and purple, the arrows of the corresponding color form a copy of the quiver $\double{\framed{Q}}$. 
\end{example}

A dimension vector for the quiver $\framedrep{\double{Q}}{\ell}$ consists of $\ell+1$ dimension vectors $\dimv{0}, \dots, \dimv{\ell}$, together with a framing vector $\framvec$. Note that an element of $\Rep_{\framedrep{\double{Q}}{\ell}}\left(\dimv{0}, \dots, \dimv{\ell}; \framvec\right)$ consists of:
\begin{itemize}
\item an element $(A^{(m)}, B^{(m)}, i^{(m)}, j^{(m)}) \in \Rep_{\double{\framed{Q}}}\left( \dimv{m}; \framvec\right)$ for every $m = 0, \dots, \ell$, and 
\item maps $\pi_{v}^{(m)}: D^{(m)}_v \to D_v^{(m+1)}$ for every $m = 0, \dots, \ell -1$ and $v \in Q_0$.
\end{itemize}

\begin{definition}
The set $\Rep^0_{\framedrep{\double{Q}}{\ell}}\left(\dimv{0}, \dots, \dimv{\ell}; \framvec\right) \subseteq \Rep_{\framedrep{\double{Q}}{\ell}}\left(\dimv{0}, \dots, \dimv{\ell}; \framvec\right)$ consists of those elements $\left(A^{(\ast)}, B^{(\ast)}, i^{(\ast)}, j^{(\ast)}, \pi^{(\ast)}\right)$ such that the following two conditions are satisfied.

\begin{enumerate}
    \item For every arrow $e: v \to w$  of $Q$ and for every $m = 0, \dots, \ell -1$, the following diagrams commute.
    \begin{equation}\label{eq:compatibility-1}
    \begin{tikzcd}
        D_v^{(m)} \arrow{r}{A^{(m)}_{e}} \arrow[d,"\pi_{v}^{(m)}"'] & D_w^{(m)} \arrow{d}{\pi_{w}^{(m)}} \\  D_v^{(m+1)} \arrow{r}{A^{(m+1)}_{e}} & D_w^{(m+1)}
    \end{tikzcd} \qquad \qquad 
    \begin{tikzcd}
        D_w^{(m)} \arrow{r}{B^{(m)}_{e}} \arrow[d,"\pi_{w}^{(m)}"'] & D_v^{(m)} \arrow[d, "\pi_{v}^{(m)}"] \\  D_w^{(m+1)} \arrow{r}{B^{(m+1)}_{e}} & D_v^{(m+1)}
    \end{tikzcd}
    \end{equation}

    \item For every vertex $v$ of $Q$ and every $m = 0, \dots, \ell-1$, the following diagrams commute.
    \begin{equation}\label{eq:compatibility-2}
    \begin{tikzcd}
 & F_v \arrow[dl,"i_v^{(m)}"'] \arrow{dr}{i_{v}^{(m+1)}} & \\ D_v^{(m)} \arrow{rr}{\pi_{v}^{(m)}} & & D_v^{(m+1)}
 \end{tikzcd} \qquad \qquad  \begin{tikzcd}
 & F_v  & \\ D_v^{(m)} \arrow{ur}{j_v^{(m)}} \arrow{rr}{\pi_{v}^{(m)}} & & D_v^{(m+1)} \arrow[ul, "j_v^{(m+1)}"'].
 \end{tikzcd} 
    \end{equation}
\end{enumerate}
Note that $\Rep^{0}_{\framedrep{\double{Q}}{\ell}}\left(\dimv{0}, \dots, \dimv{\ell}; \framvec\right)$ is closed in $\Rep_{\framedrep{\double{Q}}{\ell}}\left(\dimv{0}, \dots, \dimv{\ell}; \framvec\right)$.
\end{definition}

\begin{remark}
Note that for every $m = 0, \dots, \ell$, the elements $\left(A^{(m)}, B^{(m)}, i^{(m)}, j^{(m)}\right)$ define a representation
\[
D^{(m)} \oplus F = \bigoplus_{v \in Q_0} D^{(m)}_v \oplus \bigoplus_{v \in Q_0} F_v
\]
of the algebra $\C\double{\framed{Q}}$, cf. Remark \ref{rmk:representations}. From this point of view, \eqref{eq:compatibility-1} and \eqref{eq:compatibility-2} simply say that the map
\[
\bigoplus_{v \in Q_0} \pi_v^{(m)} \oplus \bigoplus_{v \in Q_0} \mathrm{id}_{F_v}: D^{(m)} \oplus F \to D^{(m+1)} \oplus F
\]
is a morphism of $\C\double{\framed{Q}}$-modules.
\end{remark}

The following lemma follows easily from the definitions, but will be crucial in what follows.

\begin{lemma}\label{lem:surjective}
    Let $\left(A^{(\ast)}, B^{(\ast)}, i^{(\ast)}, j^{(\ast)}, \pi^{(\ast)}\right) \in \Rep^0_{\framedrep{\double{Q}}{\ell}}\left(\dimv{0}, \dots, \dimv{\ell}; \framvec\right)$, and assume that the element $\left(A^{(m+1)}, B^{(m+1)}, i^{(m+1)}, j^{(m+1)}\right) \in \Rep_{\double{\framed{Q}}}\left(\dimv{m+1}; \framvec\right)$ is stable. Then $\pi_{v}^{(m)}: D^{(m)}_v \to D^{(m+1)}_v$ is surjective for every $v \in Q_0$. 
\end{lemma}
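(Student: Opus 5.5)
The natural candidate is the image of $\pi^{(m)}$. For each $v \in Q_0$ set
\[
D'_v := \pi^{(m)}_v\!\left(D^{(m)}_v\right) \subseteq D^{(m+1)}_v .
\]
I will check that the collection $(D'_v)_{v \in Q_0}$ satisfies conditions (1) and (2) of Definition \ref{def: stability} for the representation $\left(A^{(m+1)}, B^{(m+1)}, i^{(m+1)}, j^{(m+1)}\right)$. Stability of that representation then forces $D'_v = D^{(m+1)}_v$ for every $v$, which is exactly surjectivity of each $\pi^{(m)}_v$.

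Condition (1) follows from the commutative squares \eqref{eq:compatibility-1}. For an arrow $e\colon v\to w$ we have
\[
A^{(m+1)}_e\!\left(\pi^{(m)}_v(D^{(m)}_v)\right) = \pi^{(m)}_w\!\left(A^{(m)}_e(D^{(m)}_v)\right) \subseteq \pi^{(m)}_w\!\left(D^{(m)}_w\right) = D'_w .
\]
Similarly, $B^{(m+1)}_e(D'_w) = \pi^{(m)}_v\!\left(B^{(m)}_e(D^{(m)}_w)\right) \subseteq D'_v$.

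Condition (2) follows from the left triangle in \eqref{eq:compatibility-2}. It gives $i^{(m+1)}_v = \pi^{(m)}_v \circ i^{(m)}_v$, so
\[
i^{(m+1)}_v(F_v) = \pi^{(m)}_v\!\left(i^{(m)}_v(F_v)\right) \subseteq D'_v .
\]
The maps $j$ play no role, since the stability condition only constrains $A$, $B$ and $i$.

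There is no real obstacle here. The only care needed is to match the direction of each commuting square with the invariance conditions required by Definition \ref{def: stability}. Note also that stability of the $m$-th representation $D^{(m)}$ is not needed; only stability of $D^{(m+1)}$ is used.
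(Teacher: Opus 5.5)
Your proposal is correct and follows the paper's proof: you take $D'_v = \pi^{(m)}_v\bigl(D^{(m)}_v\bigr)$, use the squares \eqref{eq:compatibility-1} for invariance under $A^{(m+1)}, B^{(m+1)}$ and the left triangle in \eqref{eq:compatibility-2} to contain $i^{(m+1)}_v(F_v)$, and then conclude by stability of $D^{(m+1)}$. Your directional checks of the commuting squares are accurate.
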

\begin{proof}
Let $D'_v := \pi_{v}^{(m)}\left(D^{(m)}_v\right) \subseteq D^{(m+1)}_v$. By the commutativity of the diagrams \eqref{eq:compatibility-1}, the subspaces $D'_v$ are closed under the action of $A_{e}^{(m+1)}, B_{e}^{(m+1)}$. By the commutativity of the left diagram in \eqref{eq:compatibility-2}, $D'_v$ contains $i_v^{(m+1)}(F_v)$. By stability, it follows that $D'_v = D^{(m+1)}_v$ for every $v \in Q_0$. 
\end{proof}

\begin{definition}\label{def:kernel}
    Let $\left(A^{(\ast)}, B^{(\ast)}, i^{(\ast)}, j^{(\ast)}, \pi^{(\ast)}\right) \in \Rep^0_{\framedrep{\double{Q}}{\ell}}\left(\dimv{0}, \dots, \dimv{\ell}; \framvec\right)$. For each $v \in Q_0$, let us denote $K_v:= \ker\left(\pi_v^{(\ell-1)} \circ \cdots \circ \pi_v^{(0)}\right) \subseteq D_v^{(0)}$, and $K = \bigoplus_{v \in Q_0} K_v$ Note that 
\[
K \oplus 0 \subseteq D^{(0)} \oplus F
\]
is a $\C\double{\framed{Q}}$-submodule. Moreover, if $\left(A^{(\ell)}, B^{(\ell)}, i^{(\ell)}, j^{(\ell)}\right)$ is stable then $D^{(\ell)} \oplus F \cong \left(D^{(0)} \oplus F\right)/(K \oplus 0)$.
\end{definition}

\begin{definition}\label{def:prequotient}
Let $Q$ be a quiver, $\dimv{0}, \dots, \dimv{\ell} \in \Z_{\geq 0}^{Q_0}$ dimension vectors, and $\framvec \in \Z_{\geq 0}^{Q_0}$ a framing vector. We define the pre-quotient parabolic variety $\prequot{Q}{\dimv{0}, \dots, \dimv{\ell}; \framvec}$ to be the set of elements $\left(A^{(\ast)}, B^{(\ast)}, i^{(\ast)}, j^{(\ast)}, \pi^{(\ast)}\right) \in \Rep^0_{\framedrep{\double{Q}}{\ell}}\left(\dimv{0}, \dots, \dimv{\ell}; \framvec\right)$ satisfying the following conditions:
\begin{enumerate}
    \item For every $m = 0, \dots, \ell$, $\left(A^{(m)}, B^{(m)}, i^{(m)}, j^{(m)}\right) \in \Rep_{\double{\framed{Q}}}( \dimv{m}; \framvec)$ is stable. 
    \item For every $m = 0, \dots, \ell$, $\left(A^{(m)}, B^{(m)}, i^{(m)}, j^{(m)}\right)$ satisfies the moment map relations \eqref{eq:moment-map-vertex}. 
    \item The representation $K \oplus 0$ of $\double{\framed{Q}}$ defined in Definition \ref{def:kernel} is trivial, i.e., all paths of positive length act by zero on it. 
\end{enumerate}
\end{definition}

\begin{remark}
    We remark that Condition (3) of Definition \ref{def:prequotient} can be restated as
    \[
    K_v \subseteq \bigcap_{\substack{e \in Q_1 \\ \source(e) = v}} \ker\left(A_e^{(0)}\right) \cap \bigcap_{\substack{e \in Q_1 \\ \target(e) = v}} \ker\left(B_e^{(0)}\right) \cap \ker\left(j^{(0)}_v\right) \qquad \text{for every} \; v \in Q_0.
    \]
\end{remark}

Consider the group $G = G_{\dimv{0}} \times \cdots \times G_{\dimv{\ell}}$ 
Note that:

\begin{itemize}
\item The group $G$ acts naturally on $\Rep_{\framedrep{\double{Q}}{\ell}}\left( \dimv{0}, \dots, \dimv{\ell}; \framvec\right)$, as follows:
\[
(g_{0}, \dots, g_{\ell})\cdot \left(A^{(\ast)}, B^{(\ast)}, i^{(\ast)}, j^{(\ast)}, \pi^{(\ast)}\right) = \left(g_{\ast}\cdot \left(A^{(\ast)}, B^{(\ast)}, i^{(\ast)}, j^{(\ast)}\right), \left(g_{{\ast +1}, v}\pi_v^{(\ast)}g_{\ast,v}^{-1}\right)_{v}\right),
\]
where $g_m = (g_{m,v})_{v \in Q_0} \in G_{\dimv{m}}$. 
\item This action preserves the closed subset $\Rep^{0}$ of elements satisfying \eqref{eq:compatibility-1} and \eqref{eq:compatibility-2}. 
\item The action preserves the variety $\prequot{Q}{\dimv{0}, \dots, \dimv{\ell}; \framvec}$. 
\end{itemize}

\begin{lemma}
   The group $G$ acts freely on $\prequot{Q}{\dimv{0}, \dots, \dimv{\ell}; \framvec}$.
\end{lemma}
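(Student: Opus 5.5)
The plan is to reduce the statement to the classical fact, recalled above from \cite[Theorem 5.2.2]{ginzburg}, that $G_{\dimvec}$ acts freely on $\mu^{-1}(0)^{\st}$. In fact only stability is needed: the argument below shows that $G_{\dimvec}$ acts freely on $\Rep_{\double{\framed{Q}}}(\dimvec;\framvec)^{\st}$.

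Suppose $g = (g_0, \dots, g_\ell) \in G$ stabilizes a point $x = \left(A^{(\ast)}, B^{(\ast)}, i^{(\ast)}, j^{(\ast)}, \pi^{(\ast)}\right)$ of $\prequot{Q}{\dimv{0}, \dots, \dimv{\ell}; \framvec}$. Since $G$ acts componentwise, each $g_m \in G_{\dimv{m}}$ stabilizes $\left(A^{(m)}, B^{(m)}, i^{(m)}, j^{(m)}\right)$. By condition (1) of Definition \ref{def:prequotient}, this quadruple is stable, and by condition (2) it lies in $\mu^{-1}(0)$. Freeness on $\mu^{-1}(0)^{\st}$ then forces $g_m = 1$ for every $m$, so $g = 1$.

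For completeness, I would also recall the direct argument for $G_{\dimvec}$. If $g$ fixes $(A,B,i,j)$, set $D'_v := \ker(g_v - 1)$. The equations $g_{\target(e)}A_e = A_e g_{\source(e)}$ and $g_{\source(e)}B_e = B_e g_{\target(e)}$ show that $D'$ is preserved by all $A_e$ and $B_e$. The equation $g_v i_v = i_v$ shows that $i_v(F_v) \subseteq D'_v$. Stability then gives $D'_v = D_v$, hence $g_v = 1$. Note that the conditions on $\pi^{(\ast)}$ and on $K$ are not used.

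There is no real obstacle here. The only point to check is that the action on the $\pi$-components adds no subtlety, and it does not, since triviality of each $g_m$ is forced separately.
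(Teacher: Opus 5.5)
Your proof is correct and is essentially the paper's argument. The paper also observes that each component $g_m$ must stabilize the stable quadruple $\left(A^{(m)}, B^{(m)}, i^{(m)}, j^{(m)}\right)$, and it concludes by \cite[Theorem 5.2.2]{ginzburg}; your direct argument via the subspaces $\ker(g_v - 1)$ is a valid self-contained proof of that cited freeness, and you correctly note that neither the moment map condition nor the $\pi$-data is needed.
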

\begin{proof}
Since for each $m$ the tuple $\left(A^{(m)}, B^{(m)}, i^{(m)}, j^{(m)}\right) \in \Rep_{\double{\framed{Q}}}\left( \dimv{m}; \framvec\right)$ is stable, this follows directly from \cite[Theorem 5.2.2]{ginzburg}. 
\end{proof}

\begin{definition}\label{def: parabolic-variety-2}
Let $Q$ be a quiver, $\dimv{0}, \dots, \dimv{\ell}$ dimension vectors, and $\framvec$ a framing vector. Let $G := \prod_{m = 1}^{\ell} G_{\dimv{m}}$. We define the \emph{split parabolic quiver variety} as
\[
\pnak{Q}{\dimv{0}, \dots, \dimv{\ell}; \framvec} := \prequot{Q}{\dimv{0}, \dots, \dimv{\ell}; \framvec}/G. 
\]
\end{definition}

We note that, by Lemma \ref{lem:surjective}, if $\pnak{Q}{\dimv{1}, \dots, \dimv{\ell};\framvec}$   is nonempty then we must have $d_v^{(m)} - d_v^{(m+1)} \geq 0$ for every $v \in Q_0$ and every $m = 0, \dots, \ell -1$. 
In other words, we must have
\[
\dimv{m} - \dimv{m+1} \in \Z_{\geq 0}^{Q_0}.
\]

Let us now show that, in fact, the variety $\parnak{\uk}{Q}{\dimvec;\framvec}$ from Definition \ref{def: parabolic-variety-1} is a special case of the varieties we have defined in Definition \ref{def: parabolic-variety-2}. Recall that for the variety $\parnak{\uk}{Q}{\dimvec; \framvec}$ to be defined, we need that $d_v - k_v \geq 0$ for every $v \in Q_0$. We set
\[
\widehat{\dimvec} := \dimvec - \uk \in \Z_{\geq 0}^{Q_0}. 
\]
Now let $\ell := |\uk| = \sum_{v \in Q_0} k_v$, and we let $\dimv{0}, \dots, \dimv{\ell}$ be any sequence of dimension vectors satisfying:
\begin{enumerate}
    \item[(1)] $\dimv{0} = \dimvec$, $\dimv{\ell} = \widehat{\dimvec}$, and  
    \item[(2)] for any $m = 0, \dots, \ell -1$, there exists a vertex $v \in Q_0$ such that $\dimv{m} - \dimv{m+1} = \can_v$. 
\end{enumerate}

\begin{proposition}\label{prop: isomorphism}
Let $\dimvec, \uk \in \Z_{\geq 0}$, and assume $\widehat{\dimvec} = \dimvec - \uk \in \Z_{\geq 0}$. Consider a sequence of dimension vectors $\dimv{0}, \dots, \dimv{\ell}$ as above. Then,
\[
\parnak{\uk}{Q}{\dimvec; \framvec} \cong \pnak{Q}{\dimv{0}, \dots, \dimv{\ell}; \framvec}.
\]
\end{proposition}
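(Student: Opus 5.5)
Fix the sequence $v_0,\dots,v_{\ell-1}$ with $\dimv{m}-\dimv{m+1}=\can_{v_m}$. The plan is to map $\Rep^{\uk}_{\double{\framed{Q}}}(\dimvec;\framvec)^{\st}_0$ onto a slice of $\prequot{Q}{\dimv{0},\dots,\dimv{\ell};\framvec}$, and to show that this slice meets every $G$-orbit in a single $P_{\uk}$-orbit. Since $G_{\dimv{0}}$ is not quotiented out in Definition \ref{def: parabolic-variety-2} (there $G=\prod_{m\ge1}G_{\dimv{m}}$), the natural slice keeps $D^{(0)}=D$ fixed. It then normalizes the intermediate spaces using the fixed basis of $K_v$ that defines $P_v$.

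\textbf{Construction of the map.} Concretely, for each $v$ order the fixed basis $e_{v,1},\dots,e_{v,k_v}$ of $K_v$. Define a flag of subspaces $K^{(m)}_v\subseteq K_v$ by letting $K^{(m)}_v$ be spanned by the first $\#\{i<m: v_i=v\}$ basis vectors. Set $D^{(m)}_v:=D_v/K^{(m)}_v$ and let $\pi^{(m)}$ be the natural projections, so $D^{(\ell)}=\widehat{D}$. Since $A_e,B_e,j_v$ for $(A,B,i,j)=\iota(\widehat A,\widehat B,i,\widehat j)$ kill $K$, they descend to every quotient $D^{(m)}$. Together with $i^{(m)}=$ the image of $i$, this yields an element of $\Rep^0$. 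Each $D^{(m)}\oplus F$ is a quotient of $D\oplus F$ by a submodule of $K\oplus 0$, so by Remark \ref{rmk:representations} the moment map equations hold for every $D^{(m)}$. Stability descends to quotients, because the preimage of a destabilizing subspace containing $i(F)$ is a destabilizing subspace upstairs. Condition (3) of Definition \ref{def:prequotient} holds by construction. This gives a $P_{\uk}$-equivariant closed embedding $\Phi$ into $\prequot{Q}{\dots}$, where $P_{\uk}$ acts on the target through the homomorphism $P_{\uk}\to G_{\dimv{0}}\times\prod_{m\ge1}G_{\dimv{m}}$. This homomorphism is well defined because $P_v$ preserves each $K^{(m)}_v$, thanks to the upper triangular block. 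One should double check here whether $G_{\dimv{0}}$ is meant to be included in $G$. If it is (as the earlier definition $G=G_{\dimv{0}}\times\cdots$ suggests), the slice is simply the locus where $D^{(0)}=D$ and the flag $\ker(\pi^{(m-1)}\circ\cdots\circ\pi^{(0)})$ equals $K^{(\bullet)}$. I will adopt this reading.

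\textbf{Orbit correspondence.} Take any point of $\prequot{Q}{\dots}$. By Lemma \ref{lem:surjective} all $\pi^{(m)}$ are surjective, so the kernels $\ker(\pi^{(m-1)}\circ\cdots\circ\pi^{(0)})$ form a complete-type flag in $K_v^{\mathrm{tot}}$ with one-dimensional steps at the prescribed vertices. Using $G_{\dimv{0}}$, move this flag to $K^{(\bullet)}$. Then use $G_{\dimv{m}}$ for $m\ge1$ to identify $D^{(m)}$ with $D/K^{(m)}$ so that the $\pi^{(m)}$ become the standard projections. Commutativity \eqref{eq:compatibility-1}--\eqref{eq:compatibility-2} then forces $(A^{(m)},B^{(m)},i^{(m)},j^{(m)})$ to be the induced maps. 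Condition (3) together with Lemma \ref{lem: geometric embedding} shows that $D^{(0)}$ lies in the image of $\iota$. The stabilizer in $G$ of the slice is exactly the image of $P_{\uk}$. Indeed, $g_0$ must preserve the flag, hence lies in $P_{\uk}$. Each $g_m$ is then determined by $g_0$ via $\pi^{(m)}g_{m}=g_{m+1}\pi^{(m)}$ and surjectivity.

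\textbf{Main obstacle.} The delicate step is upgrading this bijection of orbit sets to an isomorphism of varieties, since $P_{\uk}$ is non-reductive and Definition \ref{def: parabolic-variety-1} is a priori only set-theoretic. I would first show that $\prequot{Q}{\dots}\to\{\text{flags}\}$ is locally trivial. I would then write
\[
\prequot{Q}{\dots}\cong G\times^{P_{\uk}}\Phi\bigl(\Rep^{\uk}_{\double{\framed{Q}}}(\dimvec;\framvec)^{\st}_0\bigr).
\]
To prove this, I would check that the action map $G\times^{P_{\uk}}\mathrm{slice}\to\prequot{}$ is bijective on points. I would also check that it is an isomorphism, using local sections of $G\to G/P_{\uk}$ (a flag variety times free torsors) to construct the inverse. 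Then $\prequot{}/G\cong \mathrm{slice}/P_{\uk}$. This simultaneously shows that the latter quotient exists as a geometric quotient and that it is isomorphic to $\pnak{Q}{\dimv{0},\dots,\dimv{\ell};\framvec}$. In particular, it is independent of the chosen ordering of the vertices $v_m$.
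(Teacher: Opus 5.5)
Your proposal is correct and follows essentially the same route as the paper: the flag $K^{(m)}_v$ built from the ordered basis of $K_v$, the quotients $D/K^{(m)}$ with stability from Lemma \ref{lem:stable-quotient}, surjectivity by normalizing the $\pi^{(m)}$ with the $G$-action, and injectivity because the stabilizer of the normalized flag is $P_{\uk}$. The reading of $G$ you finally adopt, with $G_{\dimv{0}}$ included, is the one the paper's proof actually uses (the index $m=1$ in Definition \ref{def: parabolic-variety-2} is a typo), and your extra identification of $\prequot{Q}{\dimv{0},\dots,\dimv{\ell};\framvec}$ with $G\times^{P_{\uk}}S$, where $S$ is the image of $\Phi$, is a genuine strengthening: the paper only proves a bijection on orbits and tacitly relies on the $G$-quotient to give $\parnak{\uk}{Q}{\dimvec;\framvec}$ its variety structure.
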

\begin{proof}
First, we build a map $\prequot{Q}{\dimv{0}, \dots, \dimv{\ell}; \framvec} \to \Rep^{\uk}_{{\double{\framed{Q}}}}\left(\dimvec; \framvec\right)^{\st}_{0}$. We set $D_v := D_v^{(0)}$, $\widehat{D}_v := D_v^{(\ell)}$, and consider $\pi_v := \pi_{v}^{(\ell-1)} \circ \cdots \circ \pi_v^{(0)}: D_v \to \widehat{D}_v$. By Lemma \ref{lem:surjective}, $\pi_v$ is surjective. Let $K_v := \ker(\pi_v)$ so that $\dim(K_v) = k_v$. By definition, $K_v \subseteq \ker\left(A^{(0)}_e\right)$ for every arrow $e$ such that $\source(e) = v$. Thus, $A^{(0)}_{e}: D_v \to D_{\target(e)}$ factors through a map $\widehat{A}_e: \widehat{D}_v \to D_{\target(e)}$ and similarly for maps $B_{e}$ and $j_v$. This defines a map $\prequot{Q}{\dimv{0}, \dots, \dimv{\ell}; \framvec} \to \Rep^{\uk}_{\double{\framed{Q}}}\left(\dimvec; \framvec\right)$. That the image is contained in $\Rep^{\uk}_{\double{\framed{Q}}}\left(\dimvec; \framvec\right)^{\st}_{0}$ follows since $\left(A^{(0)}, B^{(0)}, i^{(0)}, j^{(0)}\right) \in \mu^{-1}(0)^{\st}$. 

Conversely, let us construct a map $\Rep^{\uk}_{\double{\framed{Q}}}\left(\dimvec; \framvec\right)^{\st}_{0} \to \prequot{Q}{\dimv{0}, \dots, \dimv{\ell}; \framvec}$. To this end, we choose an ordered basis $(\nu_{i,v})_{i = 1, \dots, k_v}$ of the vector space $K_v$ for each $v \in Q_0$, so that $P_v \subseteq \GL(D_v)$ is the subgroup of maps $g_v: D_v \to D_v$ satisfying $g_v(\nu_{i, v}) \in \spann\left\{\nu_{j,v} \mid j \leq i\right\}$.  We define the vector spaces $D^{(m)}_{v}$ as follows. Let $\uk^{(m)} := \dimv{0} - \dimv{m}$, so that $\uk^{(0)} = 0$ and $\uk^{(m)} = \widehat{\dimvec}$. Now, define $K_v^{(m)} = \spann\left\{\nu_{i,v} \mid i \leq k_v^{(m)}\right\}$ and $D_v^{(m)} := D_v/K_v^{(m)}$. By definition, the dimension vector of $\left(D_{v}^{(m)}\right)_{v\in Q_0}$ is precisely $\dimv{m}$. In particular, we have natural maps $\pi_v^{(m)}: D_v^{(m)} \to D_v^{(m+1)}$.

Given an element $\left(\widehat{A}, \widehat{B}, i, \widehat{j}\right) \in \Rep^{\uk}_{\double{\framed{Q}}}\left(\dimvec; \framvec\right)^{\st}_{0}$, it remains to define an element $\left(A^{(m)}, B^{(m)}, i^{(m)}, j^{(m)}\right)$ for every $m = 0, \dots, \ell$. Recall the embedding $\iota: \Rep^{\uk}_{\double{\framed{Q}}}(\dimvec; \framvec) \to \Rep_{\double{\framed{Q}}}(\dimvec; \framvec)$ from Lemma \ref{lem: geometric embedding}. We define $\left(A^{(0)}, B^{(0)}, i^{(0)}, j^{(0)}\right) := \iota\left(\widehat{A}, \widehat{B}, i, \widehat{j}\right)$. To define $\left(A^{(m)}, B^{(m)}, i^{(m)}, j^{(m)}\right)$ for higher $m$, we note that we have projections:
\[
D_v \twoheadrightarrow D_v^{(m)} \twoheadrightarrow \widehat{D}_v
\]
so that, post-composing with the first projection, the element $\left(\widehat{A}, \widehat{B}, i, \widehat{j}\right)$ defines a representation in $\Rep_{\double{\framed{Q}}}^{\uk - \uk^{(m)}}(\dimvec - \uk^{(m)}; \framvec)$. Thus, applying $\iota$ from Lemma \ref{lem: geometric embedding} again, this yields an element $\left(A^{(m)}, B^{(m)}, i^{(m)}, j^{(m)}\right)$. By definition, the diagrams \eqref{eq:compatibility-1} and \eqref{eq:compatibility-2} commute. We still need to verify that $\left(A^{(m)}, B^{(m)}, i^{(m)}, j^{(m)}\right)$ is stable and satisfies the moment map equations. By assumption, both of these properties hold when $m = 0$. Now, the moment map equations clearly hold on $K^{(m)}$, since all maps $A^{(0)}, B^{(0)}$ and $j^{(0)}$ act by zero on it, hence they hold on $D^{(m)} = D^{(0)}/K^{(m)}$. 

In particular, stability follows from Lemma \ref{lem:stable-quotient} below.  This Lemma is of independent interest and also independent of the intervening material. We have then constructed a map
\[
\Phi: \Rep^{\uk}_{\double{\framed{Q}}}\left(\dimvec; \framvec\right) \to \prequot{Q}{\dimv{0}, \dots, \dimv{\ell}; \framvec}.
\]
Since the group $P_{\uk}$ preserves $\spann\left\{\nu_{i,v} \mid i \leq k_v^{(m)}\right\}$, we have an induced map $P_{\uk} \to G = \prod_{m = 0}^{\ell} G_{\dimv{m}}$ that intertwines the $P_{\uk}$-action on the source of $\Phi$, with the $G$-action on the target. This gives us the desired map
\[
\Phi: \parnak{\uk}{Q}{\dimvec; \framvec} \to \pnak{Q}{\dimv{0}, \dots, \dimv{\ell}; \framvec}. 
\]
Note that if $\left(A^{(\ast)}, B^{(\ast)}, i^{(\ast)}, j^{(\ast)}, \pi^{(\ast)}\right) \in \prequot{Q}{\dimv{0}, \dots, \dimv{\ell}; \framvec}$ then, up to the $G$-action, we may assume that the map $\pi_v^{(m)} \circ \cdots \circ \pi_v^{(0)}: D_v^{(0)} \to D_v^{(0)}/K_v^{(m)}$ is the natural projection, and this implies that $\Phi$ is surjective. 

For injectivity, if $\Phi\left(\widehat{A}, \widehat{B}, i, \widehat{j}\right)$ and $\Phi(\widehat{A}', \widehat{B}', i', \widehat{j}')$ are in the same $G$-orbit then an element in $G_{\dimv{0}}$ relating them must preserve the subspaces $\spann\left\{\nu_{i, v} \mid i \leq k^{(m)}_v\right\}$, which implies that the element belongs to $P_{\uk}$. The result follows. 
\end{proof}

\begin{lemma}\label{lem:stable-quotient}
    Let $D^{(1)} = (A, B, i, j) \in \Rep_{\double{\framed{Q}}}\left( \dimv{1}; \framvec\right)$ be a stable representation, and $D^{(2)} = (A', B', i', j') \in \Rep_{\double{\framed{Q}}}\left( \dimv{2}; \framvec\right)$ a representation that is \textbf{not} assumed to be stable. Assume there exist surjective maps $\pi_v: D_v^{(1)}  \to D_v^{(2)} $ that make the diagrams \eqref{eq:compatibility-1} and \eqref{eq:compatibility-2} commute. Then, $(A', B', i', j')$ is stable. 
\end{lemma}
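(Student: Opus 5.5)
The plan is to verify stability of $D^{(2)}$ directly from Definition \ref{def: stability}, by pulling back subrepresentations along $\pi$. Let $(D'_v \subseteq D^{(2)}_v)_{v \in Q_0}$ be a collection of subspaces that is invariant under all $A'_e$ and $B'_e$ and contains $i'_v(F_v)$ for every $v$. We must show that $D'_v = D^{(2)}_v$ for all $v$. To do so, set
\[
D''_v := \pi_v^{-1}\left(D'_v\right) \subseteq D^{(1)}_v, \qquad v \in Q_0,
\]
and check that $(D''_v)_{v\in Q_0}$ satisfies the two conditions of Definition \ref{def: stability} for the stable representation $D^{(1)}$.

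First we check invariance. Let $e: v \to w$ be an arrow and $x \in D''_v$. By the left square in \eqref{eq:compatibility-1},
\[
\pi_w\left(A_e x\right) = A'_e \pi_v(x) \in A'_e\left(D'_v\right) \subseteq D'_w,
\]
so $A_e x \in D''_w$. The same argument with the right square in \eqref{eq:compatibility-1} gives $B_e\left(D''_w\right) \subseteq D''_v$.

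Next we check the framing condition. By the left triangle in \eqref{eq:compatibility-2}, $\pi_v \circ i_v = i'_v$, so $\pi_v\left(i_v(F_v)\right) = i'_v(F_v) \subseteq D'_v$. Hence $i_v(F_v) \subseteq D''_v$. The maps $j, j'$ and the right triangle of \eqref{eq:compatibility-2} play no role here, since stability only involves $A$, $B$ and $i$.

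Since $D^{(1)}$ is stable, we conclude that $D''_v = D^{(1)}_v$ for all $v$. Applying $\pi_v$ and using surjectivity gives
\[
D'_v \supseteq \pi_v\left(\pi_v^{-1}(D'_v)\right) = \pi_v\left(D^{(1)}_v\right) = D^{(2)}_v,
\]
so $D'_v = D^{(2)}_v$. This proves that $(A', B', i', j')$ is stable.

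There is no real obstacle in this argument; it is formal. The only points needing care are two. The preimage, rather than the image, is the right subspace to test against stability of $D^{(1)}$. Surjectivity of $\pi_v$ is exactly what recovers all of $D^{(2)}_v$ at the end.
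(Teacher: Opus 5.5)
Your proof is correct and is essentially the same as the paper's. The paper also takes the preimage $\pi_v^{-1}$ of a candidate subrepresentation, uses the commuting diagrams to show it is $(A,B)$-invariant and contains $i_v(F_v)$, applies stability of $D^{(1)}$, and then uses surjectivity of $\pi_v$ to conclude.
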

\begin{proof}
    Let $\left(S'_v \subseteq D_{v}^{(2)}\right)_{v \in Q_0}$ be a collection of subspaces containing $i'(F_v)$ and $(A', B')$-stable. We must show that $S'_v = D_v^{(2)}$ for every $v \in Q_0$. For this, consider $S_v := \pi_v^{-1}(S'_v) \subseteq D_v^{(1)}$. 

    Since $S'_v$ contains $i'(F_v)=\pi_vi_v(F_v)$, we have $i_v(F_v)\subset S_v$. Similarly, for any arrow $e:v\to w$ we have $\pi_wA_e(S_v)=A_e\pi_v(S_v)=A'_e(S'_v)\subset S'_w$, so $A_e(S_v)\subset S_w$.
  We conclude that $S_v$ contains $i(F_v)$ and is $(A,B)$-stable, so $S_v = D_v^{(1)}$ by stability of $(A, B, i, j)$. Thus, by surjectivity of $\pi$,
  $$S'_v = \pi_v(S_v) = \pi_v\left(D_v^{(1)}\right) = D_v^{(2)}.$$
\end{proof}

\begin{remark}
    Proposition \ref{prop: isomorphism} generalizes as follows. The variety $\pnak{Q}{\dimv{0}, \dots, \dimv{\ell}; \framvec}$ can be realized as the quotient of $\Rep_{Q}^{\dimv{0} - \dimv{\ell}}\left(\dimv{0}; \framvec\right)^{\st}_{0}$ by the action of a parabolic subgroup $P$ of $\GL_{\dimv{0}}$, which may be constructed as follows. Consider the difference vectors
    \[
    \diff{i} := \dimv{i} - \dimv{i+1}.
    \]
    Then, $P_v$ consists of those $(\dimv{0}_v - \dimv{\ell}_v, \dimv{\ell}_v)$-block upper triangular matrices, whose upper-left part is block upper triangular with blocks of size $\diff{i}_v$, $i = 0, \dots, \ell - 1$. In particular, $\pnak{Q}{\dimv{0}, \dots, \dimv{\ell}; \framvec}$ depends only on $\framvec, \dimv{0}$ and the unordered sequence of difference vectors $\diff{i}$. Moreover, one can always assume that $\diff{i}$ is concentrated at a single vertex.
\end{remark}

\begin{remark}\label{rmk: stability-2}
    As mentioned in Remark \ref{rmk: stability} above, it would be interesting to define and study split parabolic quiver varieties for a more general stability condition. This is done in \cite{NakajimaYoshioka1} for the variety $\pnak{Q}{d, d-k; f}$ when $Q$ is the Jordan quiver. The generalization of $\pnak{Q}{d, d-k; f}$ depends on a character of $\GL(d) \times \GL(d-k)$ (see the paragraph after Theorem 1.1 in \cite{NakajimaYoshioka1}). Considering different pairs leads to interesting wall-crossing phenomena, hence it would be worthwhile to generalize this to arbitrary quivers. 
\end{remark}

\subsection{Smoothness and dimension}

The following result is analogous to \cite[Corollary 3.12]{nakajima-km}.

\begin{theorem}
\label{thm: smooth}
Let $\left(\widehat{X}, \widehat{Y}, a, \widehat{b}\right) \in \Rep_{Q}^{\uk}\left(\dimvec; \framvec\right)^{\st}_{0}$. Consider the following maps
\begin{multline}\label{eq: tangent-space}
\mathrm{Lie}(P_{\uk}) \xrightarrow{\jmath} \bigoplus_{e \in Q_1} \Hom(\widehat{D}_{\source(e)}, D_{\target(e)}) \oplus \Hom(\widehat{D}_{\target(e)}, D_{\source(e)}) \oplus \bigoplus_{v \in Q_0} \Hom(F_v, D_v) \oplus \Hom(\widehat{D}_v, F_v) \\ \xrightarrow{d\mu}  \bigoplus_{v \in Q_0} \Hom(\widehat{D}_v, D_v).
\end{multline}
given by
\begin{align*}
\jmath
\left((\xi_{v})_{v \in Q_0}\right) = \left(\xi_{\target(e)}\widehat{X}_e - \widehat{X}_e\hat{\xi}_{\source(e)}, \xi_{\source(e)}\widehat{Y}_e - \widehat{Y}_e\widehat{\xi}_{\target(e)}, \xi_va_v, b_v\widehat{\xi}_v\right), \\ 
d\mu\left(\widehat{A}, \widehat{B}, i, \widehat{j}\right) = \left(\sum_{\substack{e \in Q_1 \\ \target(e) = v}} \left[\widehat{A}_e\pi_{\source(e)}\widehat{Y}_e + \widehat{X}_e\pi_{\source(e)}\widehat{B}_e\right] -\sum_{\substack{e \in Q_1 \\ \source(e) = v}} \left[\widehat{Y}_e\pi_{\target(e)}\widehat{A}_e + \widehat{B}_e\pi_{\target(e)}\widehat{X}_e\right] + i\widehat{b} + a\widehat{j} \right)_{v \in Q_0}
\end{align*}
where, for $\xi_v \in \mathrm{Lie}(P_{\uk})_{v} \subseteq \End(D_v, D_v)$, $\widehat{\xi}_v$ denotes its projection to $\End(\widehat{D}_v, \widehat{D}_v)$. Then,
\begin{enumerate}
\item $\jmath$ is injective and $d\mu$ surjective,
\item $d\mu \circ \jmath = 0$, and 
\item the tangent space to $\parnak{\uk}{Q}{\dimvec; \framvec}$ at the $P_{\uk}$-orbit of $(\widehat{X}, \widehat{Y}, a, b)$ is the homology of the complex given in \eqref{eq: tangent-space}. 
\end{enumerate}
\end{theorem}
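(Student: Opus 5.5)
The plan is to mimic the standard argument for Nakajima quiver varieties (cf. \cite[Corollary 3.12]{nakajima-km} and \cite[Theorem 5.2.2]{ginzburg}). There are three pieces: injectivity of $\jmath$ from stability, surjectivity of $d\mu$ from stability by a trace-duality argument, and then a description of the quotient.

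\emph{Step 1: $d\mu\circ\jmath=0$.} The map $\jmath$ is the infinitesimal action of $\mathrm{Lie}(P_{\uk})$ at the point $(\widehat X,\widehat Y,a,\widehat b)$. The map $d\mu$ is the differential at that point of $\mu\circ\iota$, viewed as a map to $\bigoplus_v\Hom(\widehat D_v,D_v)$. Since $\mu\circ\iota$ is $P_{\uk}$-equivariant and vanishes at the point, it vanishes on the whole orbit. Differentiating along the orbit gives (2). Alternatively, one can expand directly, using $\pi_v\xi_v=\widehat\xi_v\pi_v$ and the moment map equation.

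\emph{Step 2: injectivity of $\jmath$.} Suppose $\jmath(\xi)=0$ and write $X_e=\widehat X_e\pi_{\source(e)}$, and similarly $Y_e$, $b_v$. Using $\pi\xi=\widehat\xi\pi$ we get
\[
\xi_{\target(e)}X_e=\widehat X_e\widehat\xi_{\source(e)}\pi_{\source(e)}=X_e\xi_{\source(e)},
\]
and similarly $\xi Y=Y\xi$. We also have $\xi_v a_v=0$. Hence $(\ker\xi_v)_v$ is an $(X,Y)$-stable collection of subspaces containing $a(F)$. Stability of $\iota(\widehat X,\widehat Y,a,\widehat b)$ then forces $\ker\xi_v=D_v$, so $\xi=0$.

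\emph{Step 3: surjectivity of $d\mu$.} I will show that the annihilator of the image of $d\mu$ under the trace pairing is zero. That pairing identifies $\Hom(\widehat D_v,D_v)^*$ with $\Hom(D_v,\widehat D_v)$. So take $\eta=(\eta_v)$ with $\eta_v\in\Hom(D_v,\widehat D_v)$ and $\sum_v\Tr(\eta_v\, d\mu(\cdot)_v)=0$ for all inputs.
\begin{itemize}
\item Varying $i$ and $\widehat j$ separately gives $\widehat b_v\eta_v=0$ and $\eta_v a_v=0$.
\item Varying $\widehat B_e$ (the pairing pulls it out of $\Tr(\eta_w\widehat X_e\pi_v\widehat B_e)-\Tr(\eta_v\widehat B_e\pi_w\widehat X_e)$) gives $\eta_w\widehat X_e\pi_v=\pi_w\widehat X_e\eta_v$.
\item Varying $\widehat A_e$ gives the analogous identity for $Y$.
\end{itemize}
Writing $\overline X_e:=\pi\widehat X_e$ for the induced operators on $\widehat D$, these read $\eta X=\overline X\eta$ and $\eta Y=\overline Y\eta$. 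So $\eta:D\to\widehat D$ intertwines the two module structures and kills $a(F)$. Therefore $(\ker\eta_v)_v$ is $(X,Y)$-stable and contains $a(F)$. Stability gives $\eta=0$, so $d\mu$ is surjective. The main thing to check carefully here is the bookkeeping of signs and of which side $\pi$ sits on. If the relation comes out as $\eta X=\overline X\eta$ only up to a sign, replacing $\eta$ by $-\eta$ on the relevant terms does not affect the kernel argument.

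\emph{Step 4: tangent space.} This is the step I expect to be the main obstacle, because $P_{\uk}$ is not reductive. By Step 3, $\mu\circ\iota$ is a submersion on the stable locus. Hence $\Rep^{\uk}_{\double{\framed{Q}}}(\dimvec;\framvec)^{\st}_0$ is smooth, and its tangent space is $\ker d\mu$. To pass to the quotient I will use Proposition~\ref{prop: isomorphism}. It identifies the $P_{\uk}$-quotient with $\prequot{Q}{\dimv{0},\dots,\dimv{\ell};\framvec}/G$, where $G$ is reductive and acts freely on the stable locus. That quotient is therefore a geometric quotient and a principal $G$-bundle (Luna slice / free action on the stable locus, as in \cite{ginzburg}). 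Transporting this back, the $P_{\uk}$-quotient map is a principal $P_{\uk}$-bundle. Combined with the injectivity of $\jmath$ from Step 2, which identifies the tangent space of the orbit with $\Imm\jmath$, this shows the tangent space of the quotient is $\ker d\mu/\Imm\jmath$, which is (3).
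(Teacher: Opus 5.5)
Your proposal is correct and is essentially the paper's proof. The key step, surjectivity of $d\mu$, is exactly the paper's argument: any $\eta\in\bigoplus_v\Hom(D_v,\widehat D_v)$ annihilating the image intertwines the $X,Y$-actions and kills $a(F)$, so $(\ker\eta_v)_v$ is a subrepresentation containing $a(F)$, and stability forces $\eta=0$. The remaining differences are minor: the paper gets injectivity of $\jmath$ and $d\mu\circ\jmath=0$ by embedding the complex into Nakajima's complex for $\nak_Q(\dimvec;\framvec)$ via $M\mapsto M\pi$, whereas you reprove them directly, and your Step 4 (reduction of structure group from the free reductive quotient of Proposition~\ref{prop: isomorphism}) spells out the principal $P_{\uk}$-bundle property that the paper leaves implicit when it says (3) follows from the definition.
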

\begin{proof}
Recall the embedding \eqref{eq: geometric embedding} from Lemma \ref{lem: geometric embedding}. Let us denote $\iota\left(\widehat{X}, \widehat{Y}, a, \widehat{b}\right) =: (X,Y,a,b)$. Similarly, for $\left(\widehat{A}, \widehat{B}, i, \widehat{j}\right) \in \Rep^{\uk}_{Q}(\dimvec; \framvec)$, we denote $\iota\left(\widehat{A}, \widehat{B}, i, \widehat{j}\right) =: (A, B, i, j)$. We have the following commutative diagram, where the bottom row is defined using $(X,Y,i,j)$ as in \cite[Corollary 3.12]{nakajima-km} and the right vertical map is given by $M \mapsto M\pi$.
\begin{center}
\begin{tikzcd}
\mathrm{Lie}(P_{\uk}) \arrow[r, "\jmath"] \arrow[d, hook] & \Rep^{\uk}_{Q}\left(\dimvec; \framvec\right) \arrow[r, "d\mu"] \arrow[d, hook, "\iota"] & \bigoplus_{v} \Hom(\widehat{D}_v, D_v) \arrow[d, hook] \\
\mathrm{Lie}(G_{\dimvec}) \arrow[r, "\jmath"] & \Rep_{\double{\framed{Q}}}\left(\dimvec; \framvec\right) \arrow[r, "d\mu"] & \bigoplus_{v} \End(D_v)
\end{tikzcd}
\end{center}
By \cite[Corollary 3.12]{nakajima-km}, in the bottom row we have that $\jmath$ is injective, $d\mu$ is surjective, and $d\mu\circ \jmath = 0$. So the corresponding results also hold for the top row except, perhaps, for the surjectivity of $d\mu$. 

For the surjectivity of $d\mu$,  we compute the orthogonal complement $d\mu^{\perp}$ for the image of $d\mu$. More precisely, consider a collection of matrices $C_v\in \Hom(D_v,\widehat{D}_v)$ and the trace pairing
$$
\Tr\left(d\mu\left(\widehat{A}, \widehat{B}, i, \widehat{j}\right)\cdot C\right)=\sum_{v}\Tr\left(d\mu\left(\widehat{A}, \widehat{B}, i, \widehat{j}\right)_v C_v\right)=
$$
\begin{multline}
\label{eq: trace pairing}
\sum_{e\in Q_1}\Tr\left(\widehat{A}_e\left[
\pi_{\source(e)}\widehat{Y}_eC_{\target(e)}-C_{\source(e)}\widehat{Y}_e\pi_{\target(e)}
\right]\right)+\\
\sum_{e\in Q_1}\Tr\left(\widehat{B}_e\left[
C_{\target(e)}\widehat{X}_e\pi_{\source(e)}-
\pi_{\target(e)}\widehat{X}_eC_{\source(e)}
\right]\right)+\\
\sum_v\Tr\left(i_v\widehat{b}_vC_v+C_va_v\widehat{j}_v\right).
\end{multline}
The collection $(C_v)$ is orthogonal to the image of $d\mu$ if \eqref{eq: trace pairing} vanishes for all choices of $\widehat{A}_e,\widehat{B}_e,i,\widehat{j}$. This impies the system of equations
\begin{equation}
\label{eq: orthogonal dmu}
\begin{cases}
\pi_{\source(e)}\widehat{Y}_eC_{\target(e)}-C_{\source(e)}\widehat{Y}_e\pi_{\target(e)}=0\\
C_{\target(e)}\widehat{X}_e\pi_{\source(e)}-
\pi_{\target(e)}\widehat{X}_eC_{\source(e)}\\
\widehat{b}_vC_v=0\\
C_va_v=0.
\end{cases}
\end{equation}
We claim that \eqref{eq: orthogonal dmu} implies $C_v=0$, so $d\mu^{\perp}=0$. Indeed, the stability condition implies that $a_v(F_v)$ generate $D$ under the action of $Y_e=\widehat{Y}_{e}\pi_{\target(e)}$ and $X_e=\widehat{X}_{e}\pi_{\source(e)}$. By \eqref{eq: orthogonal dmu} we get $C_va_v=0$. Suppose that $\zeta\in D_{\target(e)}$ and $C_{\target(e)}\zeta=0$. Then by \eqref{eq: orthogonal dmu} we get
$$
C_{\source(e)}Y_{e}(\zeta)=C_{\source(e)}\widehat{Y}_{e}\pi_{\target(e)}(\zeta)=\pi_{\source(e)}\widehat{Y}_eC_{\target(e)}(\zeta)=0.
$$
Similarly, if $\zeta\in D_{\source(e)}$ and $C_{\source(e)}\zeta=0$ then 
$C_{\target(e)}X_e\zeta=0$.

Finally, note that $\jmath$ is the differential of the $P_{\uk}$-action on the space $\Rep^{\uk}_{Q}\left(\dimvec; \framvec\right)$, whereas $d\mu$ is the differential of the restriction of the moment map to $\iota\left(\Rep^{\uk}_{Q}\left(\dimvec; \framvec\right)\right)$ with image in $\bigoplus \Hom(\widehat{D}_v, D_v)$. The claim on the tangent space now follows from the definition of $\parnak{\uk}{Q}{\dimvec; \framvec}$. 
\end{proof}

\begin{corollary}
The space $\parnak{\uk}{Q}{\dimvec; \framvec}$ is either empty or it is a smooth variety of dimension
\begin{multline*}
\dim \parnak{\uk}{Q}{\dimvec; \framvec}= \\
\sum_{e}\left[(d_{\target(e)}-k_{\target(e)})d_{\source(e)}+(d_{\source(e)}-k_{\source(e)})d_{\target(e)}\right] + \sum_{v}(2d_v - k_v)f_v-\sum_{v}\left[2(d_v-k_v)d_v+\frac{k_v(k_v+1)}{2}\right]= \\
\framvec^T(2\dimvec-\uk)-\dimvec^T\Cartan(\dimvec-\uk)-\sum_v\frac{k_v(k_v+1)}{2}=
\dim \nak_{Q}(\dimvec; \framvec)-\framvec^T\uk+\dimvec^T\Cartan\uk - \sum_v\frac{k_v(k_v+1)}{2},
\end{multline*}
where $\Cartan$ is the Cartan matrix in \eqref{eq: def Cartan}.
\end{corollary}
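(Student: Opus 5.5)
The plan is to deduce smoothness and the dimension from Theorem \ref{thm: smooth}. Since $P_{\uk}$ acts freely on $\Rep^{\uk}_{\double{\framed{Q}}}(\dimvec;\framvec)^{\st}_0$ (it embeds in $G_{\dimvec}$, which acts freely on stable points), and by Proposition \ref{prop: isomorphism} the quotient is identified with $\prequot{Q}{\dimv{0},\dots,\dimv{\ell};\framvec}/G$, a free quotient by a reductive group, the space $\parnak{\uk}{Q}{\dimvec;\framvec}$ is a well-defined variety. By Theorem \ref{thm: smooth}, $d\mu$ is surjective at every point of $\iota^{-1}(\mu^{-1}(0)^{\st})$. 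Hence $\Rep^{\uk}_{\double{\framed{Q}}}(\dimvec;\framvec)^{\st}_0$ is a smooth locally closed subvariety of the affine space $\Rep^{\uk}_{\double{\framed{Q}}}(\dimvec;\framvec)$. Its codimension is $\sum_v \dim\Hom(\widehat{D}_v,D_v)=\sum_v (d_v-k_v)d_v$, since it is the preimage of $0$ under a submersion restricted to the open stable locus. A free action of $P_{\uk}$ with injective infinitesimal action $\jmath$ then yields a smooth quotient of dimension $\dim\Rep^{\uk}-\sum_v(d_v-k_v)d_v-\dim P_{\uk}$. This agrees with the alternating sum of dimensions in the complex \eqref{eq: tangent-space}, whose middle homology is the tangent space by part (3) of the theorem. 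Equidimensionality is automatic because this count is the same at every point.

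The second step is to compute the terms. We have
\[\dim \Rep^{\uk}=\sum_e\left[(d_{\source(e)}-k_{\source(e)})d_{\target(e)}+(d_{\target(e)}-k_{\target(e)})d_{\source(e)}\right]+\sum_v\left[f_vd_v+f_v(d_v-k_v)\right].\]
The group $P_v$ consists of block upper triangular matrices with an upper triangular $k_v\times k_v$ block. Its dimension is $d_v^2-k_v(d_v-k_v)-\binom{k_v}{2}=d_v(d_v-k_v)+\frac{k_v(k_v+1)}{2}$, which can be checked as $\frac{k_v(k_v+1)}{2}+k_v(d_v-k_v)+(d_v-k_v)^2$. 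Adding the codimension term $(d_v-k_v)d_v$ gives exactly $2(d_v-k_v)d_v+\frac{k_v(k_v+1)}2$, which is the first displayed formula.

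The remaining step is a matrix rewriting. The edge terms give $\sum_e[2d_{\source(e)}d_{\target(e)}-k_{\target(e)}d_{\source(e)}-k_{\source(e)}d_{\target(e)}]$. Meanwhile $\dimvec^T\Cartan(\dimvec-\uk)=2\sum_v d_v(d_v-k_v)-\sum_e[2d_{\source(e)}d_{\target(e)}-d_{\source(e)}k_{\target(e)}-d_{\target(e)}k_{\source(e)}]$, where loops are counted correctly because $\Cartan_{vv}=2-2|Q_1(v,v)|$. Comparing these gives the second expression. The last equality then follows from \eqref{eq: dim nakajima rephrased} and the symmetry of $\Cartan$.

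The only delicate point is the claim that the orbit space of the non-reductive group $P_{\uk}$ is a geometric quotient with the expected tangent space. I would handle this through the identification of Proposition \ref{prop: isomorphism}, which realizes the quotient as a free reductive quotient, combined with Theorem \ref{thm: smooth}(3).
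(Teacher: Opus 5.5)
Your proposal is correct and takes the same route as the paper, which reads the corollary directly off Theorem \ref{thm: smooth}. The dimension is $\dim\Rep^{\uk}_{\double{\framed{Q}}}(\dimvec;\framvec)-\dim\Lie(P_{\uk})-\sum_v(d_v-k_v)d_v$, the alternating sum for the complex \eqref{eq: tangent-space}; your count of $\dim P_v$ and your Cartan-matrix rewriting then give exactly the three displayed expressions. Your remark on handling the non-reductive quotient through Proposition \ref{prop: isomorphism} makes explicit a point the paper leaves implicit.
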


\begin{corollary}
The space $\pnak{Q}{\dimvec,\dimvec-\uk; \framvec}$ is either empty or it is a smooth variety of dimension
\begin{equation}
\label{eq: dim BN correspondence}
\dim \nak_{Q}(\dimvec; \framvec)-\framvec^T\uk+\dimvec^T\Cartan\uk - \sum_v k_v^2.
\end{equation}
\end{corollary}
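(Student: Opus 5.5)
The plan is to deduce this from the previous corollary by a fibration argument, rather than redoing the tangent-space computation. By the remark after Proposition \ref{prop: isomorphism}, $\pnak{Q}{\dimvec,\dimvec-\uk; \framvec}$ is the quotient of $\Rep^{\uk}_{\double{\framed{Q}}}(\dimvec;\framvec)^{\st}_0$ by the parabolic group $P'_{\uk}=\prod_v P'_v$. Here $P'_v\subseteq \GL(d_v)$ consists of the $(k_v,d_v-k_v)$-block upper triangular matrices, with no further condition on the $k_v\times k_v$ block. The group $P_{\uk}$ of Definition \ref{def: parabolic-variety-1} is a subgroup of $P'_{\uk}$, and $P'_{\uk}/P_{\uk}\cong\prod_v \GL(k_v)/B_{k_v}=\prod_v \Fl(k_v)$.

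Hence the natural map $\parnak{\uk}{Q}{\dimvec;\framvec}\to \pnak{Q}{\dimvec,\dimvec-\uk;\framvec}$ is a locally trivial fibration with fiber $\prod_v\Fl(k_v)$, which is Corollary \ref{cor: full flag intro} and a special case of Proposition \ref{prop: flag fibration intro}. Both spaces are free quotients of the same stable locus, so the fibration comes from passing to the quotient in stages. The key steps are then as follows.

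\begin{enumerate}
\item Smoothness. Since $\parnak{\uk}{Q}{\dimvec;\framvec}$ is smooth by the previous corollary and the fibration has smooth fibers, the base is smooth. Alternatively, one can rerun Theorem \ref{thm: smooth} with $\Lie(P_{\uk})$ replaced by $\Lie(P'_{\uk})$. Injectivity of $\jmath$ and surjectivity of $d\mu$ follow by exactly the same argument, because the target of $d\mu$ is unchanged and injectivity is inherited from $\Lie(G_{\dimvec})$.
\item Dimension. Subtract the fiber dimension $\sum_v k_v(k_v-1)/2$ from the previous formula. This gives $\sum_v\bigl(k_v(k_v+1)/2+k_v(k_v-1)/2\bigr)=\sum_v k_v^2$, which is \eqref{eq: dim BN correspondence}.
\item Cross-check. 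Equivalently, compute directly: $\dim \Rep^{\uk}-\dim\bigoplus_v\Hom(\widehat D_v,D_v)-\dim P'_{\uk}$, using $\dim P'_v=d_v^2-k_v(d_v-k_v)$. This recovers the same expression.
\end{enumerate}

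The main obstacle is justifying that the quotient by the larger non-reductive group $P'_{\uk}$ is a genuine variety and that the stagewise quotient is a locally trivial fibration. I would handle this through Definition \ref{def: parabolic-variety-2}: there $\pnak{Q}{\dimvec,\dimvec-\uk;\framvec}$ is a free quotient by the reductive group $G_{\dimvec}\times G_{\dimvec-\uk}$. Choosing a flag in $\ker\pi_v$ lifts points to the $\ell=|\uk|$ version, and freeness plus local slices give local triviality.
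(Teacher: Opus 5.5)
Your proposal is correct, but your main route differs from the paper's. The paper simply reruns the proof of Theorem \ref{thm: smooth} with $P_{\uk}$ replaced by the larger parabolic $P'_{\uk}$ of $(k_v,d_v-k_v)$-block upper triangular matrices. This is exactly the alternative you mention in step (1): surjectivity of $d\mu$ never used the group, injectivity of $\jmath$ is inherited from $\Lie(G_{\dimvec})$, and the resulting dimension count is your cross-check (3).

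Your primary argument instead deduces smoothness and dimension from the full-flag case via the projection $\parnak{\uk}{Q}{\dimvec;\framvec}\to\pnak{Q}{\dimvec,\dimvec-\uk;\framvec}$. This is more economical, and it explains the discrepancy $\sum_v k_v^2-\sum_v k_v(k_v+1)/2=\sum_v\dim\Fl(k_v)$ conceptually. The paper's route additionally gives an explicit tangent complex for $\pnak{Q}{\dimvec,\dimvec-\uk;\framvec}$ (built from $\Lie(P'_{\uk})$), and it does not rely on any fibration structure.

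The price of your route is that you need the projection to be a Zariski locally trivial surjection with smooth fiber, so that regularity descends from total space to base by faithfully flat descent. Your justification via ``freeness plus local slices'' is vague. The cleanest fix is to identify $\parnak{\uk}{Q}{\dimvec;\framvec}$ with the relative complete flag variety $\prod_v\Fl(\CK_v)$ of the tautological kernel bundles $\CK_v=\ker(\CD_v\to\widehat{\CD}_v)$ over $\pnak{Q}{\dimvec,\dimvec-\uk;\framvec}$. Every complete flag in $K_v$ occurs, because all defining conditions depend only on the top subspace $K_{k_v,v}$: vanishing of $A_e,B_e,j_v$ on it, the moment map on the quotients, and stability of the quotients by Lemma \ref{lem:stable-quotient}. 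The flag bundle of a vector bundle is Zariski locally trivial. Surjectivity of this map also ensures that non-emptiness of the base gives non-emptiness of the total space, so the previous corollary applies.
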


\begin{proof}
The proof is identical but instead of the parabolic subgroup $P_{\uk}$ (resp. Lie algebra $\Lie(P_{\uk})$) we take the subgroup of $(n-k,k)$ block triangular matrices. 
\end{proof}

\begin{corollary}
More generally, consider a sequence of dimension vectors $\dimvec^{(0)},\ldots,\dimvec^{(\ell)}$ with $\dimvec^{(0)}-\dimvec^{(\ell)}=\uk$. Then the space $\pnak{Q}{\dimvec^{(0)},\ldots,\dimvec^{(\ell)}; \framvec}$ is either empty or it is a smooth variety of dimension
$$
\dim \nak_{Q}(\dimvec; \framvec)-\framvec^T\uk+\dimvec^T\Cartan\uk -\sum_{v}\sum_{1\le m_1\le m_2\le \ell}\left(d^{(m_1)}_v-d^{(m_1-1)}_v\right)
\left(d^{(m_2)}_v-d^{(m_2-1)}_v\right).
$$
\end{corollary}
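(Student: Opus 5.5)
The plan is to reduce to Theorem \ref{thm: smooth}, following the Remark after Lemma \ref{lem:stable-quotient}. That remark realizes $\pnak{Q}{\dimv{0}, \dots, \dimv{\ell}; \framvec}$ as the quotient of $\Rep^{\uk}_{\double{\framed{Q}}}(\dimv{0};\framvec)^{\st}_{0}$ by a parabolic subgroup $P\subseteq \GL_{\dimv{0}}$. For each $v$, $P_v$ consists of the $(k_v, d^{(\ell)}_v)$-block upper triangular matrices whose upper-left $k_v\times k_v$ block is itself block upper triangular with diagonal blocks of sizes $\diff{0}_v,\dots,\diff{\ell-1}_v$. Here $\diff{i}=\dimv{i}-\dimv{i+1}$ and $\uk=\dimv{0}-\dimv{\ell}$.

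The proof of Proposition \ref{prop: isomorphism} carries over verbatim with $P$ in place of $P_{\uk}$. Instead of the full flag $\spann\{\nu_{i,v}\mid i\le k^{(m)}_v\}$ one uses the partial flag $K^{(m)}_v=\ker(\pi^{(m-1)}_v\circ\cdots\circ\pi^{(0)}_v)$, and Lemma \ref{lem:stable-quotient} again guarantees stability of each $D^{(m)}$.

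Next we rerun the proof of Theorem \ref{thm: smooth} with $\Lie(P_{\uk})$ replaced by $\Lie(P)$. Nothing in that argument used the specific shape of the parabolic. Injectivity of $\jmath$ and $d\mu\circ\jmath=0$ come from restricting the Nakajima complex for $\GL_{\dimv{0}}$. Surjectivity of $d\mu$ onto $\bigoplus_v\Hom(\widehat{D}_v,D_v)$ comes from the orthogonality argument with the matrices $C_v$ and stability, and this is independent of the group. Hence the quotient is smooth. Its dimension is the middle term minus $\dim\Hom(\widehat D,D)$ minus $\dim\Lie(P)$. This is exactly the previous computation, except that $\sum_v k_v(k_v+1)/2$ is replaced by the dimension of the upper-left block of $\Lie(P_v)$.

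The remaining work is bookkeeping of $\dim\Lie(P_v)$. It equals $d^{(\ell)}_v\, d^{(0)}_v + k_v d^{(\ell)}_v \cdot 0 + \dim(\text{block upper triangular } k_v\times k_v)$. More precisely, the lower-right $d^{(\ell)}_v\times d^{(\ell)}_v$ block and the upper-right $k_v\times d^{(\ell)}_v$ block contribute the same as for $P_{\uk}$. The only change is in the upper-left part, which contributes
\[
\sum_{1\le m_1\le m_2\le \ell}\bigl(d^{(m_1-1)}_v-d^{(m_1)}_v\bigr)\bigl(d^{(m_2-1)}_v-d^{(m_2)}_v\bigr).
\]
Indeed, block $(m_1,m_2)$ with $m_1\le m_2$ has size $\diff{m_1-1}_v\times\diff{m_2-1}_v$. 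Since both factors flip sign, this agrees with the expression in the statement. Plugging into the earlier formula, where $\dim\nak_Q(\dimvec;\framvec)-\framvec^T\uk+\dimvec^T\Cartan\uk$ collects all other terms, yields the claim, with $\dimvec=\dimv{0}$. As sanity checks, $\ell=1$ recovers \eqref{eq: dim BN correspondence}, and unit steps recover $k_v(k_v+1)/2$.

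The only potential obstacle is confirming that the Remark's identification with the $P$-quotient is valid, including that $P$ acts freely. Freeness follows from freeness of $\GL_{\dimv{0}}$ on stable points. Alternatively, one can avoid the Remark: Proposition \ref{prop: flag fibration intro} gives a partial flag fibration over the $\ell=1$ case, whose fiber has dimension $\sum_v\sum_{m_1<m_2}\diff{m_1-1}_v\diff{m_2-1}_v$. Adding this to \eqref{eq: dim BN correspondence}, where $k_v^2=\bigl(\sum_m\diff{m-1}_v\bigr)^2$, gives the same formula. Smoothness then follows because the total space of a locally trivial fibration with smooth fiber over a smooth base is smooth.
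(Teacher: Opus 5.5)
Your proposal is correct and is essentially the paper's argument: the paper gives no separate proof for this corollary, but its proof of the $\ell=1$ case is to rerun Theorem \ref{thm: smooth} with $\Lie(P_{\uk})$ replaced by another parabolic, and your version does this with the parabolic from the Remark after Lemma \ref{lem:stable-quotient}. The only change in the count is the upper-left block, $\sum_{m_1\le m_2}\diff{m_1-1}_v\diff{m_2-1}_v$, which you computed correctly; your alternative via the flag fibration of Proposition \ref{prop: flag fibration intro} over the $\ell=1$ case is also valid, and it is the same bookkeeping the paper uses in Lemma \ref{lem: dim triple}.
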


\subsection{Maps between split parabolic quiver varieties}

In this section, we construct maps between split parabolic quiver varieties that will be important in Section \ref{sec: BN} and in the sequel paper \cite{BqtQ}. First, we compare split parabolic quiver varieties to usual quiver varieties. 

\begin{lemma}\label{lem: quiver-variety-embedding}
We have an embedding
\begin{equation}\label{eq:embedding}
\pnak{Q}{\dimv{0}, \dots, \dimv{\ell}; \framvec} \hookrightarrow \nak_Q\left(\dimv{0}; \framvec\right) \times \cdots \times \nak_Q\left(\dimv{\ell}; \framvec\right).
\end{equation}
\end{lemma}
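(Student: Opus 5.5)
The map is the obvious one. A point of $\prequot{Q}{\dimv{0}, \dots, \dimv{\ell}; \framvec}$ is a tuple $\left(A^{(\ast)}, B^{(\ast)}, i^{(\ast)}, j^{(\ast)}, \pi^{(\ast)}\right)$ in which each $\left(A^{(m)}, B^{(m)}, i^{(m)}, j^{(m)}\right)$ lies in $\mu^{-1}(0)^{\st}$ for the dimension vector $\dimv{m}$. Forgetting the maps $\pi^{(\ast)}$ gives a morphism
\[
\prequot{Q}{\dimv{0}, \dots, \dimv{\ell}; \framvec} \to \prod_{m=0}^{\ell} \mu^{-1}(0)^{\st}_{\dimv{m}}.
\]
This morphism is equivariant for $G = \prod_m G_{\dimv{m}}$, acting factorwise on the target. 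Passing to the free quotients gives the morphism \eqref{eq:embedding}; it is well defined because both quotients are geometric quotients by free actions, with principal bundle structure coming from \cite[Theorem 5.2.2]{ginzburg}.

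For injectivity, suppose two points of the pre-quotient have images in the same $G$-orbit. After acting by $G$ we may assume that all $\double{\framed{Q}}$-representations $D^{(m)}$ coincide, and we are left with two systems of maps $\pi^{(m)}, \pi'^{(m)}$ compatible with the same representations. The key observation is that a morphism of $\C\double{\framed{Q}}$-modules $D^{(m)}\oplus F \to D^{(m+1)}\oplus F$ that is the identity on $F$ is unique whenever the source is stable. Indeed, the difference $\pi^{(m)} - \pi'^{(m)}$ kills $i^{(m)}(F)$, by \eqref{eq:compatibility-2}. It also intertwines the $A$'s and $B$'s, by \eqref{eq:compatibility-1}. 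Hence its kernel is an $(A,B)$-stable collection of subspaces containing $i^{(m)}(F_v)$, and by stability of $D^{(m)}$ it is everything. So $\pi = \pi'$, and the two points are equal in the quotient. Equivalently, each $\pi^{(m)}$ is determined by $\pi^{(m)}(x\, i^{(m)}(f)) = x\, i^{(m+1)}(f)$ for paths $x$ in $A,B$, and stability says these vectors span.

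To upgrade injectivity to a closed embedding, I would show two things: the image is closed, and the map is injective on tangent spaces (or is a monomorphism that is proper). For closedness, work in $\prod_m \mu^{-1}(0)^{\st}$. The condition that there exists $\pi^{(m)}$ compatible with $D^{(m)}$ and $D^{(m+1)}$ is a linear condition on $\pi^{(m)}$ whose solution space has dimension at most $1$ in the affine sense, namely at most one solution by the above. Existence can be phrased as follows: the linear map $\C\double{Q}\otimes F \to D^{(m+1)}$, $x\otimes f\mapsto x\,i^{(m+1)}(f)$, vanishes on the kernel of the surjection $\C\double{Q}\otimes F \twoheadrightarrow D^{(m)}$. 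Restricting to paths of bounded length suffices, so this is a closed rank condition. The kernel condition (3) of Definition \ref{def:prequotient} is closed as well, as is surjectivity of the $\pi$'s, which holds automatically by Lemma \ref{lem:surjective}. The unique $\pi$ depends regularly on the data, by Cramer's rule on the locus where it exists, so the pre-quotient is isomorphic onto a closed $G$-stable subvariety. A closed $G$-stable subvariety descends to a closed subvariety of the product of quiver varieties, since the quotient map is a principal bundle.

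I expect the main obstacle to be this scheme-theoretic step: showing that the image is closed and that the map is an isomorphism onto it, rather than just a bijection. If the "unique solution depending regularly" argument is awkward, the fallback is to compare tangent spaces. I would check that the differential is injective using the description of Theorem \ref{thm: smooth} and the same stability argument applied to first-order deformations $\delta\pi$. Together with injectivity on points and smoothness of the source, this gives a locally closed embedding, which is all that \eqref{eq:embedding} needs to assert.
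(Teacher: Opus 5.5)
Your injectivity argument is the paper's own argument. After moving both points so that the representations $D^{(m)}$ agree, the kernel of $\pi^{(m)}-\pi'^{(m)}$ contains $i^{(m)}(F_v)$ by \eqref{eq:compatibility-2} and is $(A^{(m)},B^{(m)})$-stable by \eqref{eq:compatibility-1}. Stability of the source $D^{(m)}$ then forces $\pi^{(m)}=\pi'^{(m)}$.

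The paper stops there, reading ``embedding'' as an injective morphism, so your closed-immersion upgrade goes beyond what is proved. Its main line is sound:
\begin{itemize}
\item existence of $\pi^{(m)}$ is a rank condition on paths of bounded length;
\item the unique solution is locally $\Phi'_S\Phi_S^{-1}$ for a nonvanishing maximal minor, so it is regular;
\item a closed $G$-stable subset descends along the principal $G$-bundle.
\end{itemize}

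There are two caveats.

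First, your path-algebra criterion only encodes compatibility with $A$, $B$ and $i$. The condition $j^{(m+1)}\pi^{(m)}=j^{(m)}$ has to be imposed separately, and it is not automatic. For one vertex with no arrows and $\dimv{0}=\dimv{1}$, stability pins $\pi^{(0)}$ down through $i$, but without this condition $j^{(0)}$ and $j^{(1)}$ could be chosen independently. It is a closed condition once you know $\pi^{(m)}$ is regular, so this is easy to add.

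Second, the tangent-space fallback is not valid as stated. An injective morphism from a smooth variety with injective differentials need not be a locally closed embedding. For example, $t\mapsto(t^2-1,\,t^3-t)$ maps $\mathbb{A}^1\setminus\{1\}$ bijectively and unramifiedly onto the nodal cubic $y^2=x^2(x+1)$, but it is not an isomorphism onto its image. So that route would need properness in addition, or else the regularity of $\pi$, which your main argument already supplies.
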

\begin{proof}
For the purposes of this proof, we will write
\[
\mu^{-1}_m(0)^{\st} := \left\{(A, B, i, j) \in \Rep_{D(Q^\heartsuit)}\left(\dimv{m}; \framvec\right) : (A, B, i, j) \; \text{satisfies} \; \eqref{eq:moment-map-vertex} \; \text{and is stable.} \right\}.
\]
For each $m = 0, \dots, \ell$  we have a  map $\prequot{Q}{\dimv{0}, \dots, \dimv{\ell}; \framvec} \to \mu^{-1}_m(0)^{\st}$ that is equivariant with respect to the obvious projection $G \to G_{\dimv{m}}$. This yields a map
\begin{equation}\label{eq:map-component}
\comp_m: \pnak{Q}{\dimv{0}, \dots, \dimv{\ell}; \framvec} \to \nak_Q\left(\dimv{m}; \framvec\right)
\end{equation}
so that the map in \eqref{eq:embedding} is simply the product $\comp_0 \times \cdots \times \comp_{\ell}$. We have to verify that this map is indeed injective. 

Assume that the $G$-orbits of $\left(A^{(\ast)}, B^{(\ast)}, i^{(\ast)}, j^{(\ast)}, \pi^{(\ast)}\right)$ and $\left(\widetilde{A}^{(\ast)}, \widetilde{B}^{(\ast)}, \widetilde{i}^{(\ast)}, \widetilde{j}^{(\ast)}, \widetilde{\pi}^{(\ast)}\right)$ map to the same element under \eqref{eq:embedding}. Up to the $G$-action, we may assume that $A^{(\ast)} = \widetilde{A}^{(\ast)}$, $B^{(\ast)} = \widetilde{B}^{(\ast)}$, $i^{(\ast)} = \widetilde{i}^{(\ast)}$ and $j^{(\ast)} = \widetilde{j}^{(\ast)}$. We need to show that $\pi^{(\ast)} = \widetilde{\pi}^{(\ast)}$. Fix $m$ and set $D'_v := \ker\left(\pi_{v}^{(m)} - \widetilde{\pi}_{v}^{(m)}\right) \subseteq D_v^{(m)}$. By the left diagram in \eqref{eq:compatibility-2}, $D'_v$ contains the image of $i^{(m)}_v$ and, by \eqref{eq:compatibility-1}, the spaces $D'_v, v \in Q_0$ are closed under the action of $A^{(m)}, B^{(m)}$. The result now follows by stability of $\left(A^{(m)}, B^{(m)}, i^{(m)}, j^{(m)}\right)$.
\end{proof}

Lemma \ref{lem: quiver-variety-embedding} allows us to compare the split parabolic quiver varieties to classical Nakajima correspondences. Let $v \in Q_0$ be a vertex of $Q$, and let $\dimvec$ be a dimension vector. In \cite[Section 5]{nakajima-km}, Nakajima introduced a correspondence $N_Q\left(\dimvec, \dimvec - \can_v; \framvec\right) \subseteq \nak_Q\left(\dimvec; \framvec\right) \times \nak_Q\left(\dimvec - \can_v; \framvec\right)$ consisting of pairs of representations $(D, \widehat{D})$ such that there exists a surjective map $\pi: D \twoheadrightarrow \widehat{D}$. 

\begin{proposition}\label{prop: comparison-nakajima-correspondences}
    We have
    \[
    N_Q\left(\dimvec, \dimvec - \can_v; \framvec\right) \simeq \pnak{Q}{\dimvec, \dimvec - \can_v; \framvec} \times \C^{2|Q_1(v,v)|}. 
    \]
\end{proposition}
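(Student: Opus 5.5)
We will realize the isomorphism on the level of pre-quotients, as a $G$-equivariant isomorphism, and then pass to quotients. We use the description of $N_Q(\dimvec, \dimvec-\can_v;\framvec)$ as pairs $(D,\widehat D)$ of stable points admitting a surjection $\pi\colon D\oplus F\to \widehat D\oplus F$ of $\C\double{\framed{Q}}$-modules that is the identity on $F$. This means $N_Q$ is the quotient by $G_{\dimvec}\times G_{\dimvec-\can_v}$ of the locus $\mathcal N\subseteq \Rep^0_{\framedrep{\double{Q}}{1}}(\dimvec,\dimvec-\can_v;\framvec)$ where both components are stable and satisfy the moment map equations. By the argument of Lemma \ref{lem: quiver-variety-embedding}, $\pi$ is unique when it exists. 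Comparing with Definition \ref{def:prequotient}, $\prequot{Q}{\dimvec,\dimvec-\can_v;\framvec}\subseteq\mathcal N$ is cut out by condition (3) only. So the whole point is to understand the kernel $K=\ker\pi$, which is one-dimensional and concentrated at $v$.

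\textbf{Step 1: the action on $K$.} Since $K\oplus 0$ is a submodule supported at the single vertex $v$, every arrow $e$ of $Q$ that is not a loop at $v$ satisfies $A^{(0)}_e(K)=0$ and $B^{(0)}_e(K)=0$. Likewise $j^{(0)}_v(K)=0$, because $\pi$ is the identity on $F$ and $j^{(1)}\pi=j^{(0)}$. For a loop $e\in Q_1(v,v)$, $K$ is stable under $A^{(0)}_e,B^{(0)}_e$, so these act on it by scalars $\alpha_e,\beta_e$. These scalars should be regular $G$-invariant functions on $\mathcal N$, and I would write them as
\[
\alpha_e=\Tr\bigl(A^{(0)}_e\bigr)-\Tr\bigl(A^{(1)}_e\bigr),\qquad \beta_e=\Tr\bigl(B^{(0)}_e\bigr)-\Tr\bigl(B^{(1)}_e\bigr).
\]
These hold because $A^{(1)}_e$ is the map induced by $A^{(0)}_e$ on $D_v/K$. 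This gives a $G$-invariant morphism $\Psi\colon\mathcal N\to\C^{2|Q_1(v,v)|}$, and $\Psi^{-1}(0)=\prequot{Q}{\dimvec,\dimvec-\can_v;\framvec}$.

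\textbf{Step 2: the translation.} Define $T\colon\mathcal N\to \prequot{Q}{\dimvec,\dimvec-\can_v;\framvec}\times\C^{2|Q_1(v,v)|}$ by
\[
A^{(m)}_e\mapsto A^{(m)}_e-\alpha_e\,\mathrm{id},\qquad B^{(m)}_e\mapsto B^{(m)}_e-\beta_e\,\mathrm{id}
\]
for each loop $e$ at $v$ and $m=0,1$, leaving all other data (including $\pi$) unchanged, and record $(\alpha,\beta)$ in the second factor. We must check that $T$ lands in the pre-quotient, and we check four things in turn.
\begin{enumerate}
\item The moment map at $v$ is unchanged, since $[A_e-\alpha\,\mathrm{id},B_e-\beta\,\mathrm{id}]=[A_e,B_e]$.
\item Stability is unchanged, since subspaces invariant under $A_e$ are exactly those invariant under $A_e-\alpha\,\mathrm{id}$.
\item The diagrams \eqref{eq:compatibility-1} still commute, since $\pi$ intertwines scalar multiples of the identity.
\item The shifted loops now kill $K$, so condition (3) holds.
\end{enumerate}
The inverse map adds the scalars back. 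Both maps are $G$-equivariant morphisms, so $T$ descends to an isomorphism $N_Q(\dimvec,\dimvec-\can_v;\framvec)\cong\pnak{Q}{\dimvec,\dimvec-\can_v;\framvec}\times\C^{2|Q_1(v,v)|}$.

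\textbf{Main obstacle.} The delicate point is to justify working with $\mathcal N$ scheme-theoretically, i.e. that Nakajima's $N_Q$ really is the free quotient of $\mathcal N$. Also, the requirement that $\pi$ is a module map at the framing (rather than merely that a surjection exists) should be matched with Nakajima's definition in \cite[Section 5]{nakajima-km}. Once the uniqueness of $\pi$ is in hand (Lemma \ref{lem: quiver-variety-embedding}) and the trace formula gives regularity of $\alpha,\beta$, the remaining verifications are routine.
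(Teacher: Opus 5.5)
Your proposal is correct and is essentially the paper's argument: the kernel of $\pi$ is one-dimensional at $v$, only loops at $v$ can act on it (by scalars), and translating those loops by these scalars preserves stability, the moment map and compatibility with $\pi$. Beyond what the paper writes, you make the scalars regular $G$-invariant functions via $\Tr(A^{(0)}_e)-\Tr(A^{(1)}_e)$ and carry out the isomorphism $G$-equivariantly on pre-quotients, shifting both $D$ and $\widehat D$; the paper leaves these points implicit.
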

\begin{proof}
By definition, the embedding $\pnak{Q}{\dimvec, \dimvec - \can_v; \framvec} \hookrightarrow \nak_Q(\dimvec; \framvec) \times \nak_Q(\dimvec - \can_v; \framvec)$ has its image in $N_Q\left(\dimvec, \dimvec - \can_v; \framvec\right)$. Now let $(D, \widehat{D}) \in N_Q\left(\dimvec, \dimvec - \can_v; \framvec\right)$. The kernel $K$ of the map $\pi: D \twoheadrightarrow \widehat{D}_v$ is a $1$-dimensional representation concentrated at $v$, so the only maps that are possibly nonzero on $K$ are the loops at $v$. If $e$ is a loop at $v$ then, since $K$ is $1$-dimensional, then both $A_e|_K$ and $B_e|_K$ are scalars, say $a_e, b_e$. Replacing $A_e$ by $A_e - a_e$ and $B_e$ by $B_e-b_e$ does not affect the stability condition and, since scalars are central, it does not affect the moment map equations. This gives us an element in the image of $\pnak{Q}{\dimvec, \dimvec - \can_v; \framvec}$ and the result follows.  
\end{proof}

\begin{lemma}\label{lem: maps}
Assume $(A^{(\ast)}, B^{(\ast)}, i^{(\ast)}, j^{(\ast)}, \pi^{(\ast)}) \in \prequot{Q}{\dimv{0}, \dots, \dimv{\ell}; \framvec}$. Then
    \begin{enumerate}
        \item[(a)] For every $v \in Q_0$ and every $m = 0, \dots, \ell-1$, the map $\pi_{v}^{(m)}$ is surjective.
        \item[(b)] Forgetting the tuple $\left(A^{(0)}, B^{(0)}, i^{(0)}, j^{(0)}, \pi^{(0)}\right)$ gives an element of $\prequot{Q}{\dimv{1}, \dots, \dimv{\ell}; \framvec}$. 
        \item[(c)] Forgetting the tuple $\left(A^{(\ell)}, B^{(\ell)}, i^{(\ell)}, j^{(\ell)}, \pi^{(\ell -1)}\right)$ gives an element of $\prequot{Q}{\dimv{0}, \dots, \dimv{\ell-1}; \framvec}$. 
    \end{enumerate}
Moreover, the maps in (b) and (c) descend to maps
\[
\begin{array}{c}
P_{+}:\pnak{Q}{\dimv{0}, \dots, \dimv{\ell}; \framvec} \to \pnak{Q}{\dimv{1}, \dots, \dimv{\ell}; \framvec}, \\
P_{-}:\pnak{Q}{\dimv{0}, \dots, \dimv{\ell}; \framvec} \to \pnak{Q}{\dimv{0}, \dots, \dimv{\ell -1}; \framvec}.
\end{array}
\]
\end{lemma}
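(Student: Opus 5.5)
Part (a) is immediate from Lemma \ref{lem:surjective}. By condition (1) of Definition \ref{def:prequotient}, every tuple $\left(A^{(m+1)}, B^{(m+1)}, i^{(m+1)}, j^{(m+1)}\right)$ is stable. Applying that lemma for each $m = 0, \dots, \ell-1$ gives surjectivity of every $\pi_v^{(m)}$.

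For (b) and (c), I will check the three conditions of Definition \ref{def:prequotient} for the truncated tuples. Conditions (1), (2), and the compatibility diagrams \eqref{eq:compatibility-1}, \eqref{eq:compatibility-2} involve each level, or each pair of consecutive levels, separately, so they are inherited. The only real work is condition (3), which concerns the kernel of the composite projection out of the new initial level.

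In case (c) the initial level is still $D^{(0)}$. The new kernel is $K'_v = \ker(\pi_v^{(\ell-2)}\circ\cdots\circ\pi_v^{(0)})$, and this is contained in the old kernel $K_v$. The operators $A^{(0)}_e$, $B^{(0)}_e$, $j^{(0)}_v$ vanish on $K$, hence they vanish on $K'$.

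In case (b) the initial level becomes $D^{(1)}$, and the new kernel is $K''_v = \ker(\pi_v^{(\ell-1)}\circ\cdots\circ\pi_v^{(1)}) \subseteq D^{(1)}_v$. Here I use (a): since $\pi^{(0)}_v$ is surjective, $K''_v = \pi^{(0)}_v(K_v)$. To see this, take $x\in K''_v$ and write $x = \pi^{(0)}_v(y)$; then $y \in K_v$. Now let $e$ be an arrow with $\source(e)=v$. By \eqref{eq:compatibility-1},
\[
A^{(1)}_e(x) = A^{(1)}_e\pi^{(0)}_v(y) = \pi^{(0)}_{\target(e)}A^{(0)}_e(y) = 0.
\]
The same argument applies to $B^{(1)}_e$ via \eqref{eq:compatibility-1} and to $j^{(1)}_v$ via \eqref{eq:compatibility-2}, using $j^{(1)}_v\pi^{(0)}_v = j^{(0)}_v$. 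So $K''\oplus 0$ is a trivial subrepresentation. This step, which needs surjectivity from (a), is the only non-formal point of the argument.

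For the descent, the forgetful maps in (b) and (c) are equivariant for the projections $G \to \prod_{m\ge1}G_{\dimv{m}}$ and $G\to\prod_{m\le \ell-1}G_{\dimv{m}}$ respectively, so they induce maps of orbit sets. To get morphisms of varieties, I note that the quotients are geometric quotients by free actions of reductive groups, obtained by restricting the GIT quotients $\mu^{-1}(0)^{\st}\to\nak_Q$ to closed conditions. A $G$-invariant morphism $\prequot{Q}{\dimv{0},\dots,\dimv{\ell};\framvec}\to \pnak{Q}{\dimv{1}, \dots, \dimv{\ell}; \framvec}$ (the composite of forgetting and the quotient map) then factors through $\pnak{Q}{\dimv{0}, \dots, \dimv{\ell}; \framvec}$ by the universal property of categorical quotients. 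The same reasoning gives $P_-$.
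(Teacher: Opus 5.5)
Your proof is correct and matches the paper's argument. Part (a) follows from Lemma \ref{lem:surjective}, part (c) from the inclusion of kernels, and part (b) by lifting an element of the new kernel through the surjection $\pi^{(0)}_v$ and applying the diagrams \eqref{eq:compatibility-1} and \eqref{eq:compatibility-2}.

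Your descent paragraph covers a point the paper leaves implicit, and it is fine in substance. Strictly, though, the pre-quotient is not just a restriction of the GIT quotients: it is a closed $G$-invariant subvariety of $\prod_m \mu_m^{-1}(0)^{\st}$ times the $G$-representation of $\pi$-data. It is the resulting principal $G$-bundle structure that makes it a categorical quotient.
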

\begin{proof}
Statement (a) follows from Lemma \ref{lem:surjective}. For (b) and (c), the only thing that we have to check is that the new elements satisfy condition (3) of Definition \ref{def:prequotient}. 

Let us first verify statement (c). For this, note that $\ker\left(\pi_{v}^{(\ell-2)} \circ \cdots \circ \pi_{v}^{(0)}\right) \subseteq \ker\left(\pi_{v}^{(\ell -1)} \circ \pi_{v}^{(\ell -2)} \cdots \circ \pi_{v}^{(0)}\right)$, so if a map acts by zero on the larger subspace it also does so in the smaller subspace. 

Let us now verify (b). Assume $d \in \ker\left(\pi_{v}^{(\ell-1)} \circ \cdots \circ \pi_{v}^{(1)}\right)$. We must show that $L(d) = 0$, where $L$ is either $A^{(1)}_{e}$,  $B^{(1)}_{e}$, or $j^{(1)}_{v}$. By Lemma \ref{lem:surjective}, we can find $d' \in D_v^{(0)}$ such that $d = \pi_{v,0}(d')$. Note that $d' \in \ker\left(\pi_{v}^{(\ell-1)} \circ \cdots \circ \pi_{v}^{(0)}\right)$. From here, the result follows easily using commutativity of \eqref{eq:compatibility-1} and \eqref{eq:compatibility-2}.
\end{proof}

We will need two important special cases of the maps in Lemma \ref{lem: maps}. First, we consider the case $\ell=1$ with $\dimvec^{(0)}=\dimvec$ and $\dimvec^{(1)}=\dimvec-\uk.$ In this case we get  two projections
\begin{equation}
\label{eq: def p and q}
\begin{tikzcd}
 &\pnak{Q}{\dimvec,\dimvec-\uk; \framvec} \arrow[swap]{dl}{p_{+}} \arrow{dr}{p_{-}} & \\
\nak_{Q}\left(\dimvec-\uk; \framvec\right) & & \nak_{Q}\left(\dimvec; \framvec\right) 
\end{tikzcd}
\end{equation}
which send a pair of representations $\left(D,\widehat{D}\right)$ to $D$ and $\widehat{D}$ respectively. Alternatively, one can think of the pair $(p_-,p_+)$ as a special case of the map \eqref{eq:embedding}. In Section \ref{sec: BN} we study the geometric properties of the maps \eqref{eq: def p and q} in detail, describe their images and prove that these are stratified Grassmannian bundles (see Lemmas \ref{lem: BN projection 1} and \ref{lem: second projection}).

Furthermore, we can define the maps between the spaces $\parnak{\uk}{Q}{\dimvec;\framvec}$ as follows. 

\begin{corollary}
Let $v \in Q_0$, and assume that $k_v > 0$. Then, we have maps
\begin{equation}
\label{eq: def P plus minus}
\begin{tikzcd}
 & \parnak{\uk}{Q}{\dimvec; \framvec} \arrow[swap]{dl}{P_{+,v}} \arrow{dr}{P_{-,v}}& \\
\parnak{\uk - \can_v}{Q}{\dimvec-\can_v; \framvec} & & \parnak{\uk - \can_v}{Q}{\dimvec; \framvec}. 
\end{tikzcd}
\end{equation}
\end{corollary}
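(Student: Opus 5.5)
The plan is to realize both maps as instances of Lemma \ref{lem: maps}, applied to a well-chosen ordering of the sequence of dimension vectors. By Proposition \ref{prop: isomorphism}, $\parnak{\uk}{Q}{\dimvec;\framvec}\cong\pnak{Q}{\dimv{0},\dots,\dimv{\ell}; \framvec}$ for \emph{any} sequence with $\dimv{0}=\dimvec$, $\dimv{\ell}=\dimvec-\uk$ and each consecutive difference equal to some $\can_{v_m}$. The right-hand side does not depend on which sequence we pick. Since $k_v>0$, we have freedom to place the vertex $v$ at either end of the sequence.

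For $P_{+,v}$, choose the sequence with $\dimv{0}-\dimv{1}=\can_v$. Then $\dimv{1}=\dimvec-\can_v$, and $\dimv{1},\dots,\dimv{\ell}$ is a sequence of the type in Proposition \ref{prop: isomorphism}, starting at $\dimvec-\can_v$ and ending at $\dimvec-\uk=(\dimvec-\can_v)-(\uk-\can_v)$, with unit steps. Lemma \ref{lem: maps}(b) gives the map
\[
P_+:\pnak{Q}{\dimv{0},\dots,\dimv{\ell};\framvec}\to\pnak{Q}{\dimv{1},\dots,\dimv{\ell};\framvec}.
\]
Composing with the isomorphisms of Proposition \ref{prop: isomorphism} on both sides gives
\[
P_{+,v}:\parnak{\uk}{Q}{\dimvec;\framvec}\to\parnak{\uk-\can_v}{Q}{\dimvec-\can_v;\framvec}.
\]

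For $P_{-,v}$, choose instead a sequence with $\dimv{\ell-1}-\dimv{\ell}=\can_v$, so that $\dimv{\ell-1}=\dimvec-(\uk-\can_v)$. The truncated sequence $\dimv{0},\dots,\dimv{\ell-1}$ starts at $\dimvec$, takes unit steps, and has total difference $\uk-\can_v$. Lemma \ref{lem: maps}(c) then gives $P_-$. Again via Proposition \ref{prop: isomorphism}, this yields
\[
P_{-,v}:\parnak{\uk}{Q}{\dimvec;\framvec}\to\parnak{\uk-\can_v}{Q}{\dimvec;\framvec}.
\]

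The only point needing care is bookkeeping. The identifications of Proposition \ref{prop: isomorphism} for different orderings should be regarded as canonical, or at least the maps should be defined for a fixed ordering as above. Concretely, in the description of Definition \ref{def: parabolic-variety-1}, one must choose the ordered basis of $K=\bigoplus_w K_w$ compatibly with the chosen ordering of steps.

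In the parabolic description, $P_{-,v}$ forgets the last basis vector $\nu_{k_v,v}$ of $K_v$, keeping only the flag up to $K_v^{(\ell-1)}$. Equivalently, it restricts to the smaller parabolic containing $P_{\uk}$, and one checks that $A,B,j$ still vanish on the smaller kernel. $P_{+,v}$ instead passes to $D/\spann\{\nu_{1,v}\}$ with the induced data. Stability of this quotient follows from Lemma \ref{lem:stable-quotient}, and triviality of the residual kernel follows as in Lemma \ref{lem: maps}(b). No genuine obstacle is expected beyond these verifications, which are already contained in Lemma \ref{lem: maps}.
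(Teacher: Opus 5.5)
Your proposal is correct and follows the paper's proof: you pick the unit-step ordering with $\dimvec-\dimv{1}=\can_v$ to get $P_{+,v}$ from Lemma~\ref{lem: maps}(b), and the ordering with $\dimv{\ell-1}-\dimv{\ell}=\can_v$ to get $P_{-,v}$ from Lemma~\ref{lem: maps}(c), transporting both through Proposition~\ref{prop: isomorphism}. Your concrete descriptions (quotient by $K_{1,v}$ for $P_{+,v}$, forgetting $K_{k_v,v}$ for $P_{-,v}$) also agree with the paper's explicit description of these maps.
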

\begin{proof}
For the map $P_{+,v}$ we use the identification $\parnak{\uk}{Q}{\dimvec; \framvec}$ with $\pnak{Q}{\dimvec, \dimv{1}, \dots, \dimv{\ell}; \framvec}$. We may assume without loss of generality that $\dimvec - \dimv{1} = \can_v$. Then the map $P_{+,v}$ is obtained as in Lemma \ref{lem: maps} (b). For $P_{-,v}$ now we assume that $\dimv{\ell -1} - \dimv{\ell} = \can_v$, and now the map appears as in Lemma \ref{lem: maps} (c).
\end{proof}

Let us provide down-to-earth descriptions of the maps $P_{+,v}$ and $P_{-,v}$ of \eqref{eq: def P plus minus}. Note that an element of $\parnak{\uk}{Q}{\dimvec; \framvec}$ consists of: 
\begin{itemize}
\item an element $[(A, B, i, j)] \in \nak_{Q}(\dimvec; \framvec)$, and
\item for each vertex $w$, a flag  $0 \subseteq K_{1,w} \subseteq \cdots \subseteq K_{k_w, w} \subseteq \bigcap_{\source(e) = w} \ker(A_e) \cap \bigcap_{\target(e) = w}\ker(B_e) \cap \ker(j_w)$, where $\dim(K_{i,w}) = i$. 
\end{itemize}

Then, the map $P_{-,v}$ simply forgets about the subspace $K_{k_v, v}$ (but leaves the element $[(A, B, i, j)]$ unaffected), while the map $P_{+,v}$ takes the quotient by $K_{1,v}$ (which is allowed by Lemma \ref{lem:stable-quotient}). 

The maps \eqref{eq: def P plus minus} play a central role in our follow-up paper  \cite{BqtQ}, and we give a geometric description of their fibers in Corollary \ref{cor: fibers full flags}.

\subsection{Isotropic property} By Lemma \ref{lem: quiver-variety-embedding} we have an embedding $\pnak{Q}{\dimvec, \dimvec - \uk; \framvec} \hookrightarrow \pnak{Q}{\dimvec; \framvec} \times \pnak{Q}{\dimvec - \uk; \framvec}$, and the variety $\pnak{Q}{\dimvec; \framvec} \times \pnak{Q}{\dimvec - \uk; \framvec}$ is symplectic where, as usual, the symplectic form is negated on the second factor. The goal of this section is to show that $\pnak{Q}{\dimvec, \dimvec-\uk; \framvec}$ is always isotropic in $\pnak{Q}{\dimvec; \framvec} \times \pnak{Q}{\dimvec - \uk; \framvec}$ and, moreover, it is Lagrangian if the support of $\uk$ is totally disconnected as a subgraph of $Q$.  First, we compare $\dim\pnak{Q}{\dimvec, \dimvec - \uk; \framvec}$ to half the dimension of the product $\pnak{Q}{\dimvec; \framvec} \times \pnak{Q}{\dimvec - \uk; \framvec}$.

\begin{lemma}
\label{lem: less than half}
We have 
$$
\dim \pnak{Q}{\dimvec,\dimvec-\uk;\framvec}=\frac{1}{2}\left(\dim \nak_{Q}(\dimvec;\framvec)+\dim \nak_{Q}(\dimvec-\uk;\framvec)\right)-\sum_{v,w}|Q_1(v,w)|k_vk_w.
$$
\end{lemma}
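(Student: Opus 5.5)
This is a direct computation: combine the dimension formula \eqref{eq: dim BN correspondence} for $\pnak{Q}{\dimvec,\dimvec-\uk;\framvec}$ with the quadratic formula \eqref{eq: dim nakajima rephrased} for quiver varieties, and compare. No geometry is needed beyond these two corollaries.

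First, by \eqref{eq: dim nakajima rephrased} and the symmetry of $\Cartan$,
\[
\dim\nak_Q(\dimvec-\uk;\framvec)=2\framvec^T\dimvec-2\framvec^T\uk-\dimvec^T\Cartan\dimvec+2\dimvec^T\Cartan\uk-\uk^T\Cartan\uk .
\]
Adding $\dim\nak_Q(\dimvec;\framvec)=2\framvec^T\dimvec-\dimvec^T\Cartan\dimvec$ and halving gives
\[
\frac12\left(\dim\nak_Q(\dimvec;\framvec)+\dim\nak_Q(\dimvec-\uk;\framvec)\right)=\dim\nak_Q(\dimvec;\framvec)-\framvec^T\uk+\dimvec^T\Cartan\uk-\frac12\uk^T\Cartan\uk .
\]

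Second, I subtract this from \eqref{eq: dim BN correspondence}. The difference is
\[
\frac12\uk^T\Cartan\uk-\sum_v k_v^2 .
\]
Expanding with \eqref{eq: def Cartan},
\[
\uk^T\Cartan\uk=2\sum_v k_v^2-\sum_{v,w}\left(|Q_1(v,w)|+|Q_1(w,v)|\right)k_vk_w=2\sum_v k_v^2-2\sum_{v,w}|Q_1(v,w)|k_vk_w .
\]
The last equality reindexes the second sum by swapping $v$ and $w$. Hence the difference equals $-\sum_{v,w}|Q_1(v,w)|k_vk_w$, which is the claimed formula.

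There is no real obstacle. The only points needing care are bookkeeping. Loops at $v$ contribute to the diagonal entry $\Cartan_{v,v}=2-2|Q_1(v,v)|$, and this is consistent with the double-counting above. One also has to assume non-emptiness so that \eqref{eq: dim BN correspondence} applies; if $\pnak{Q}{\dimvec,\dimvec-\uk;\framvec}$ is nonempty, then both $\nak_Q(\dimvec;\framvec)$ and $\nak_Q(\dimvec-\uk;\framvec)$ are nonempty via $p_\pm$.
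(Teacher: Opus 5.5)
Your proof is correct and follows the paper's argument. The paper also writes $\dim \nak_Q(\dimvec-\uk;\framvec)$ as a correction of $\dim \nak_Q(\dimvec;\framvec)$, compares with the dimension formula for the correspondence, and expands $\uk^T\Cartan\uk$ exactly as you do. The only difference is notation: the paper phrases the linear term through $\uk^T\uk^0[\dimvec]=\dimvec^T\Cartan\uk-\framvec^T\uk$ and uses the rewritten form \eqref{eq: dim BN correspondence new}, whereas you expand the quadratic form directly.
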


\begin{proof}
One can check (see, for example, the proof of Lemma \ref{lem: second projection dim preimage} below) that
$$
\dim \nak_Q(\dimvec-\uk;\framvec)=\dim \nak_Q(\dimvec;\framvec)+2\uk^T\uk^0[\dimvec]-\uk^T\Cartan\uk,
$$
where $\uk^0[\dimvec]$ is as defined in the introduction \eqref{eq:def-k0-intro}, so
$$
\frac{1}{2}\left(\dim \nak_{Q}(\dimvec;\framvec)+\dim \nak_{Q}(\dimvec-\uk;\framvec)\right)=\dim \nak_Q(\dimvec;\framvec)+\uk^T\uk^0[\dimvec]-\frac{1}{2}\uk^T\Cartan\uk.
$$
On the other hand,
$$
\uk^T\Cartan\uk=\sum_{v,w}(2\delta_{v,w}-|Q_1(v,w)|-|Q_1(w,v)|)k_vk_w=2\sum_v k_v^2-2\sum_{v,w}|Q_1(v,w)|k_vk_w,
$$
so the result follows from \eqref{eq: dim BN correspondence new} below, that is independent of the intervening material.
\end{proof}

Lemma \ref{lem: less than half} shows that the correspondence $\pnak{Q}{\dimvec,\dimvec-\uk;\framvec}$ is at most half-dimensional. In fact, we have the following stronger result.

\begin{proposition}\label{prop: isotropic} 
If nonempty, $\pnak{Q}{\dimvec, \dimvec - \uk; \framvec}$ is isotropic in $\pnak{Q}{\dimvec; \framvec} \times \pnak{Q}{\dimvec - \uk; \framvec}$. Moreover, it is Lagrangian if and only if whenever $k_v, k_w \neq 0$, there are no arrows between $v$ and $w$. 
\end{proposition}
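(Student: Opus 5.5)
Both claims reduce to two separate steps: isotropy is a pointwise check on tangent vectors, and the Lagrangian criterion then follows from dimension counting. I take the tangent space description of Theorem \ref{thm: smooth}, adapted as in the proof of \eqref{eq: dim BN correspondence} to the group of $(k,d-k)$ block triangular matrices. A point of $\pnak{Q}{\dimvec,\dimvec-\uk;\framvec}$ is represented by $(X,Y,a,b)=\iota(\widehat X,\widehat Y,a,\widehat b)\in\mu^{-1}(0)^{\st}$, and a tangent vector by $(\widehat A,\widehat B,i,\widehat j)$ with $d\mu=0$. The differential of $p_-$ sends this vector to the class of $(A,B,i,j)=(\widehat A\pi,\widehat B\pi,i,\widehat j\pi)$ in $T\nak_Q(\dimvec;\framvec)$. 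The differential of $p_+$ sends it to the induced tangent vector $(\pi\widehat A,\pi\widehat B,\pi i,\widehat j)$ on $\widehat D$, since the quotient by $K$ is formed as in Proposition \ref{prop: isomorphism}.

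The symplectic form on a quiver variety is induced from the trace form
\[
\omega\big((A,B,i,j),(A',B',i',j')\big)=\sum_e \Tr(AB'-A'B)+\sum_v\Tr(ij'-i'j),
\]
so isotropy amounts to the identity $\omega_D(p_-\xi,p_-\eta)=\omega_{\widehat D}(p_+\xi,p_+\eta)$ for two tangent vectors $\xi=(\widehat A,\widehat B,i,\widehat j)$ and $\eta=(\widehat A',\widehat B',i',\widehat j')$. On the $D$ side, cyclicity of trace gives
\[
\Tr(\widehat A\pi\widehat B'\pi)=\Tr(\pi\widehat A\,\pi\widehat B')\quad\text{and}\quad \Tr(i\,\widehat j'\pi)=\Tr(\pi i\,\widehat j'),
\]
and the right-hand sides are exactly the corresponding terms of $\omega_{\widehat D}(p_+\xi,p_+\eta)$. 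So I expect the identity to hold term by term, even before imposing the linearized moment map equation. The only care needed is that lifting tangent vectors from $\Rep^{\uk}$ to honest tangent vectors of the two quiver varieties is well defined modulo the gauge directions. This holds because $\omega$ on $\mu^{-1}(0)$ pairs gauge directions to zero against $\ker d\mu$. Therefore $\omega\oplus(-\omega)$ restricts to zero, and the correspondence is isotropic.

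For the Lagrangian statement, smoothness (Theorem \ref{thm: smooth}) and isotropy reduce the question to comparing dimensions. By Lemma \ref{lem: less than half}, the dimension equals half the dimension of the product exactly when $\sum_{v,w}|Q_1(v,w)|k_vk_w=0$. This sum vanishes precisely when there is no arrow between any two vertices $v,w$ (including $v=w$, which gives loops) with $k_v,k_w\neq0$. The proposition's ``if and only if'' requires the phrase ``no arrows between $v$ and $w$'' to cover loops at $v$ when $v=w$.

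The main obstacle is justifying the tangent maps of $p_\pm$ in the parametrization by $\Rep^{\uk}$. The concern is whether $dp_+$ really is given by composing with $\pi$, given that the $P_{\uk}$-action mixes the summands. I would handle this by working on the prequotient $\prequot{Q}{\dimvec,\dimvec-\uk;\framvec}$. There the two components are honest points of $\mu^{-1}(0)^{\st}$, and the tangent vector at the quotient is $(\pi\widehat A,\ldots)$ after choosing $\pi$ fixed. Since $G$-invariance of $\omega$ makes the choice of representative irrelevant, this completes the argument.
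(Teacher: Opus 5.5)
Your proof is correct, and for the isotropy part it takes a genuinely different and more direct route than the paper.

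The paper inducts on $|\uk|$. The base case $\uk=\can_v$ goes through Proposition \ref{prop: comparison-nakajima-correspondences} and Nakajima's theorem that Hecke correspondences are Lagrangian, with a separate modification of Nakajima's complex when $v$ carries loops. The inductive step uses Lemma \ref{lem:stable-quotient} to place $\pnak{Q}{\dimvec,\dimvec-\uk-\can_v;\framvec}$ inside the convolution of $\pnak{Q}{\dimvec,\dimvec-\uk;\framvec}$ with $\pnak{Q}{\dimvec-\uk,\dimvec-\uk-\can_v;\framvec}$. It then invokes a Chriss--Ginzburg result on compositions of isotropic correspondences.

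You instead observe the following. With $\pi$ fixed, $p_-$ and $p_+$ lift to the \emph{linear} maps $\iota$ and $\rho\colon(\widehat X,\widehat Y,a,\widehat b)\mapsto(\pi\widehat X,\pi\widehat Y,\pi a,\widehat b)$ on $\Rep^{\uk}_{\double{\framed{Q}}}(\dimvec;\framvec)$. Here $P$ is the parabolic stabilizing $K$, and these maps are equivariant along $P\hookrightarrow G_{\dimvec}$ and $P\to G_{\dimvec-\uk}$, $g\mapsto\widehat g$. By cyclicity of trace, $\iota^*\omega=\rho^*\omega$ holds identically on this whole vector space, even before imposing $d\mu=0$. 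Since $q^*\omega_{\mathrm{red}}=\omega|_{\mu^{-1}(0)^{\st}}$ and the quotient by $P$ is a submersion, isotropy is immediate.

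This approach is uniform in loops and needs no induction. Beyond the tangent-space description of Theorem \ref{thm: smooth}, it uses no external results. It also dispenses with your ``main obstacle'': $\rho$ is linear and equivariant, so its differential is $\rho$ itself and descends without passing to the prequotient. The paper's argument, by contrast, ties the statement to Nakajima's classical correspondences and to the convolution structure studied in Section \ref{sec: compositions-correspondences}.

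The Lagrangian criterion is proved the same way in both, by the dimension count of Lemma \ref{lem: less than half}. Your reading that $v=w$ (loops) is included matches the remark after the proposition. To pass from $\dim\pnak{Q}{\dimvec,\dimvec-\uk;\framvec}$ to the dimension of its image in the product, you should also cite the injectivity in Lemma \ref{lem: quiver-variety-embedding}.
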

\begin{proof}
We proceed by induction on $|\uk| = \sum k_v$. If $|\uk| = 1$ then there exists $v \in Q_0$ such that $\uk = \can_v$. If there are no loops at the vertex $v$, the result follows from Proposition \ref{prop: comparison-nakajima-correspondences} together with \cite[Theorem 5.7]{nakajima-km}, that claims that the Nakajima correspondence $N_Q\left(\dimvec, \dimvec - \can_v; \framvec\right)$ is Lagrangian. If there are loops at $v$, the same proof as in \cite[Theorem 5.7]{nakajima-km} shows that the Nakajima correspondence $N_Q\left(\dimvec, \dimvec - \can_v; \framvec\right)$ is coisotropic. 
However, we can modify the complex \cite[(5.1)]{nakajima-km} in a similar way to what is done in \eqref{eq: tangent-space} to show that the variety defined analogously to $\pnak{Q}{\dimvec, \dimvec - \can_v; \framvec}$ but only requiring that $B_e$ is zero on $K_v$ for each loop $e: v \to v$, is coisotropic and half-dimensional, thus Lagrangian.
It follows that $\pnak{Q}{\dimvec, \dimvec - \can_v; \framvec}$ is isotropic.

For the inductive step, consider $\uk' = \uk + \can_v$, and consider the product
\[
\pnak{Q}{\dimvec; \framvec} \times \pnak{Q}{\dimvec - \uk; \framvec} \times \pnak{Q}{\dimvec - \uk - \can_v; \framvec}. 
\]
For $i, j \in \{1,2,3\}$, let $p_{ij}$ be the projection to the corresponding factors. We claim that
\begin{equation}\label{eq: composition-containment}
\pnak{Q}{\dimvec, \dimvec - \uk - \can_v; \framvec} \subseteq p_{13}\left(p_{12}^{-1}(\pnak{Q}{\dimvec, \dimvec - \uk; \framvec} \cap p_{23}^{-1}(\pnak{Q}{\dimvec - \uk, \dimvec - \uk - \can_v; \framvec})\right). 
\end{equation}
Note that, if \eqref{eq: composition-containment} holds, then $\pnak{Q}{\dimvec, \dimvec - \uk - \can_v; \framvec}$ is indeed isotropic by \cite[Proposition 2.7.51]{chriss-ginzburg}. To show \eqref{eq: composition-containment}, let $(D, \widehat{D}) \in \pnak{Q}{\dimvec, \dimvec - \uk - \can_v; \framvec}$, so that we have a surjection $\pi: D \to \widehat{D}$ whose kernel $K$ is a trivial representation of dimension vector $\uk + \can_v$ (and framing vector $0$).  We choose a one-dimensional subspace of $K_v$ so that we can factor the surjection $\pi: D \to \widehat{D}$ as $D \twoheadrightarrow \widetilde{D} \twoheadrightarrow \widehat{D}$. By Lemma \ref{lem:stable-quotient}, the representation $\widetilde{D}$ is stable, and $(D, \widehat{D}) = p_{13}(D, \widetilde{D}, \widehat{D})$, so \eqref{eq: composition-containment} follows. 

To prove the last claim, note that $\pnak{Q}{\dimvec, \dimvec - \uk; \framvec}$ is Lagrangian if and only if
$$2\dim\pnak{Q}{\dimvec, \dimvec - \uk; \framvec} = \dim\pnak{Q}{\dimvec; \framvec} + \dim\pnak{Q}{\dimvec - \uk; \framvec}.$$
The result now follows from Lemma \ref{lem: less than half}.
\end{proof}

\begin{remark}
    Note that if $\pnak{Q}{\dimvec, \dimvec - \uk; \framvec}$ is Lagrangian in $\pnak{Q}{\dimvec; \framvec} \times \pnak{Q}{\dimvec - \uk; \framvec}$ and $k_v \neq 0$; then Proposition \ref{prop: isotropic} implies that $Q$ does not have loops at $v$. 
\end{remark}

\subsection{Torus action}\label{sec:torus-action}

The torus action \eqref{eq:torus-action} on the usual quiver variety extends naturally to split parabolic quiver varieties. Recall the torus $\widetilde{T}_{\framvec} := \C^{\times} \times (\C^{\times})^{|Q_1|} \times \prod_{v \in Q_0}\widetilde{T}_v$. Then, $\widetilde{T}$ acts on $\Rep_{\double{\framed{Q}}}^{\uk}\left(\dimvec; \framvec\right)^{\st}_{0}$ as follows:
\[
(q, (t_e)_{e \in Q_1}, (U_v)_{v \in Q_0})\cdot (\widehat{A}, \widehat{B}, i, \widehat{j}) = (t_e\widehat{A}_e, q^{-1}t_e^{-1}\widehat{B}_e, q^{-1}iU_v^{-1}, U_v\widehat{j}).
\]
This action commutes with the action of $P_{\uk}$, so descends to an action on $\parnak{\uk}{Q}{\dimvec; \framvec}$ that factors through $T_{\framvec} = (\C^{\times}) \times (\C^{\times})^{|Q_1|} \times \prod_{v \in Q_0} T_v$. 

In terms of the variety $\pnak{Q}{\dimv{0}, \dots, \dimv{\ell}; \framvec}$, the torus $\widetilde{T}_{\framvec}$ acts on $\prequot{Q}{\dimv{0}, \dots, \dimv{\ell}; \framvec}$ as follows:
\[
(q, (t_e)_{e \in Q_1}, (U_v)_{v \in Q_0}) \cdot (A^{(\ast)}, B^{(\ast)}, i^{(\ast)}, j^{(\ast)}, \pi^{(\ast)}) = (t_eA_e^{(\ast)}, q^{-1}t_e^{-1}B_e^{(\ast)}, q^{-1}i_v^{(\ast)}U_v^{-1}, U_vj_v^{(\ast)}, \pi^{(\ast)}),
\]
and the action descends to an action of $T_{\framvec}$ on $\pnak{Q}{\dimv{0}, \dots, \dimv{\ell}; \framvec}$. Note that the map \eqref{eq:embedding} is $T_{\framvec}$-equivariant, where the $T_{\framvec}$-action on the right-hand side is diagonal. In particular, we have an embedding
\[
\pnak{Q}{\dimv{0}, \dots, \dimv{\ell}; \framvec}^{T_{\framvec}} \hookrightarrow \nak_Q(\dimv{0}; \framvec)^{T_{\framvec}} \times \cdots \times \nak_Q(\dimv{0}; \framvec)^{T_{\framvec}},
\]
so that $\pnak{Q}{\dimv{0}, \dots, \dimv{\ell}; \framvec}$ has isolated fixed points provided each of the varieties $\nak_Q(\dimv{0}; f), \dots,$ $\nak_Q(\dimv{\ell}; f)$ has isolated fixed points as well.

\subsection{Tautological bundles} By definition, the variety $\pnak{Q}{\dimv{0}, \dots, \dimv{\ell}; \framvec}$ comes equipped with tautological vector bundles $\CD_v^{(i)}$ of rank $\dimv{i}_v$. Moreover, these come equipped with surjective maps $\pi^{(i)}_{v}: \CD_{v}^{(i)} \to \CD_{v}^{(i+1)}$, and the kernel is a vector bundle of rank $\dimv{i}_{v} - \dimv{i+1}_{v}$.

Another way of obtaining these bundles is as follows. For each $i = 1, \dots, \ell$, we have a surjective map $\pi^{(i-1)}_v\circ \cdots \circ \pi_{v}^{(0)}: \CD_v^{(0)} \to \CD_v^{(i)}$, and the kernel $\CK_{v,(i)}$ is a vector bundle. For each $v \in Q_0$; these form a chain
\begin{equation}\label{eq: bundle-chain}
0 = \CK_{v,(0)} \subseteq \CK_{v,(1)} \subseteq \cdots \subseteq \CK_{v, (\ell)} \subseteq \CD_{v}^{(0)}
\end{equation}
and $\CD_v^{(i)} \cong \CD_v^{(0)}/\CK_{v, (i)}$.

Following the isomorphism given in Proposition \ref{prop: isomorphism}, for the variety $\parnak{\uk}{Q}{\dimvec; \framvec} \cong \pnak{Q}{\dimv{0}, \dots, \dimv{\ell}; \framvec}$ we denote the tautological bundle $\CD^{(0)}_{v}$ simply by $\CD_v$, and the bundle $\CD^{(\ell)}_{v}$ simply by $\widehat{\CD}_v$. Note that there is a surjective map $\pi_v: \CD_v \to \widehat{\CD}_v$. In this case, after eliminating redundant terms in \eqref{eq: bundle-chain}, we obtain a chain of sub-bundles
\[
0 = \CK_{v, 0} \subseteq \CK_{v, 1} \subseteq \cdots \subseteq \CK_{v, k_v} = \ker(\pi_v) \subseteq \CD_v
\]
 where $\mathrm{rank}(\CK_{v, i}) = i$. We have tautological line bundles
 \[
 \CL_{v, i} := \CK_{v, i}/\CK_{v, i-1}, i = 1, \dots, k_v.
 \]
Note that all of these bundles are equivariant with respect to the action of $T_{\framvec}$ on $\parnak{\uk}{Q}{\dimvec; \framvec}$.

\section{Quiver Brill-Noether loci}
\label{sec: BN}

\subsection{Definition and basic properties} 

For a vertex $v$ of $Q$, we denote by $\C_v$ the trivial one-dimensional representation of $\double{\framed{Q}}$ with dimension vector $\can_v$ and framing vector $0$. By analogy with \cite{Bayer}, we can consider the following subvariety in $\nak_Q(\dimvec;\framvec)$.

\begin{definition}
Let $\uk \in \Z_{\geq 0}^{Q_0}$. We define the \emph{quiver Brill-Noether locus}  as
$$
\BN_Q^{\uk}(\dimvec;\framvec):=
\left\{D\in \nak_Q(\dimvec;\framvec)\mid \dim\Hom(\C_v,D)\ge k_v\ \mathrm{for\ all}\ v\right\}. 
$$
Furthermore, we define
$$
\BN_Q^{=\uk}(\dimvec;\framvec):=
\left\{D\in \nak_Q(\dimvec;\framvec)\mid \dim\Hom(\C_v,D)=k_v\ \mathrm{for\ all}\ v\right\}. 
$$
\end{definition}

Clearly,
$$
\BN_Q^{\uk}(\dimvec;\framvec)=\bigsqcup_{\ur\geq \uk}\BN_Q^{=\ur}(\dimvec;\framvec).
$$

\begin{remark}
Several variants and special cases of the varieties $\BN^{\uk}_{Q}(\dimvec; \framvec)$ and $\BN^{=\uk}_{Q}(\dimvec; \framvec)$ have been considered before:
\begin{enumerate}
\item When the vector $\uk$ is concentrated at a single vertex, the variety $\BN^{=\uk}$ was considered by Nakajima in \cite[(4.3)]{nakajima-km}. We remark that Nakajima works with the dual stability condition from ours, so his varieties are defined in terms of the dimension of $\Hom(D, \C_v)$, see Remark \ref{rmk: stability}. The multiplicative analogue of these varieties was considered by Yamakawa in \cite{yamakawa}.
\item In the special case when $Q$ is the Jordan quiver and $\framvec = 1$, the varieties $\BN^{k}, \BN^{=k}$ were considered in \cite{Bayer,NakajimaYoshioka1,NakajimaYoshioka2}. We will elaborate on this in Example \ref{ex: hilbert-scheme} below. 
\end{enumerate}

\end{remark}

\begin{lemma}\label{lem:easy hom}
a) We have 
$$
\Hom(\C_v,D)=\bigcap_{\substack{e \in Q_1 \\ \source(e) = v}} \ker\left(A_e\right) \cap \bigcap_{\substack{e \in Q_1 \\ \target(e) = v}} \ker\left(B_e\right) \cap \ker\left(j_v\right).
$$

b) The locus $\BN_Q^{\uk}\left(\dimvec;\framvec\right)$ is closed in $\nak_Q\left(\dimvec;\framvec\right)$. 

c) The locus $\BN_Q^{=\uk}(\dimvec;\framvec)$ is open (but possibly empty) in $\BN_Q^{\uk}(\dimvec;\framvec)$.
\end{lemma}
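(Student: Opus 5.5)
For part a), I will compute $\Hom(\C_v,D)$ directly. A morphism $\C_v \to D$ of $\double{\framed{Q}}$-modules is a linear map $\C \to D_v$, determined by the image $\zeta$ of $1$, and zero on $F$ and on $D_w$ for $w\neq v$. Every path of positive length acts by zero on $\C_v$. So the map is a module morphism exactly when every arrow leaving $v$ in $\double{\framed{Q}}$ kills $\zeta$. Those arrows are:
\begin{itemize}
\item $A_e$ for $\source(e)=v$;
\item $B_e$ for $\target(e)=v$, since $e^\ast$ goes from $\target(e)$ to $\source(e)$;
\item $j_v$, the arrow to the framing vertex.
\end{itemize}
Compatibility with the idempotents is automatic. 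This gives the stated intersection of kernels. When $e$ is a loop at $v$, both $A_e$ and $B_e$ appear, which is consistent with the formula.

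For part b), I will use part a) to write $\dim \Hom(\C_v,D) = d_v - \rk\Phi_v(D)$, where
\[
\Phi_v(D) := \Bigl(\bigoplus_{\source(e)=v} A_e\Bigr)\oplus\Bigl(\bigoplus_{\target(e)=v}B_e\Bigr)\oplus j_v : D_v \to \bigoplus_{\source(e)=v} D_{\target(e)} \oplus \bigoplus_{\target(e)=v} D_{\source(e)} \oplus F_v.
\]
The condition $\dim\Hom(\C_v,D)\ge k_v$ is then $\rk \Phi_v \le d_v-k_v$. This is the vanishing of all minors of size $d_v-k_v+1$ of $\Phi_v$, a closed condition on $\mu^{-1}(0)^{\st}$.

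The only point to check is that the rank of $\Phi_v$ is $G_{\dimvec}$-invariant. This holds because $g$ acts on $\Phi_v$ by pre-composition with $g_v^{-1}$ and post-composition with an invertible block-diagonal map. The preimage of $\BN^{\uk}_Q$ in $\mu^{-1}(0)^{\st}$ is therefore a closed $G_{\dimvec}$-stable subset. Since $\mu^{-1}(0)^{\st}\to\nak_Q(\dimvec;\framvec)$ is a principal $G_{\dimvec}$-bundle, hence a geometric quotient carrying the quotient topology, closed invariant sets descend to closed sets. Intersecting over $v$ gives closedness of $\BN^{\uk}_Q(\dimvec;\framvec)$.

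For part c), I will use the decomposition $\BN_Q^{\uk}=\bigsqcup_{\ur\ge\uk}\BN_Q^{=\ur}$. Inside $\BN_Q^{\uk}$, the locus $\BN_Q^{=\uk}$ is the complement of $\bigcup_v \BN_Q^{\uk+\can_v}(\dimvec;\framvec)$. This is a finite union of closed sets by part b), so $\BN_Q^{=\uk}$ is open in $\BN_Q^{\uk}$. Equivalently, it is the locus where the lower semicontinuous functions $\rk\Phi_v$ attain their maximal allowed value $d_v-k_v$. Nothing here is a real obstacle. The only step needing care is descending closedness through the quotient, which is handled by the principal bundle property.
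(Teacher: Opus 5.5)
Your proposal is correct and follows the paper's argument. The paper's proof is a single line: (a) is clear, (b) follows from semicontinuity of the kernel dimension, and (c) follows from (b). You have filled in exactly those details: the rank condition on $\Phi_v$ via minors, its $G_{\dimvec}$-invariance, descent of closed invariant sets through the quotient $\mu^{-1}(0)^{\st}\to\nak_Q(\dimvec;\framvec)$, and the complement of $\bigcup_v \BN_Q^{\uk+\can_v}(\dimvec;\framvec)$ for (c).
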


\begin{proof}
Part (a) is clear, and part (b) follows from the semicontinuity of the dimension of the kernel. Part (c) follows from (b).
\end{proof}

Define the vector
\begin{equation}
\label{eq: def k0}
\uk^0[\dimvec]=\left(k_v^0[\dimvec]\right),\quad k^0_v[\dimvec]=2d_v-\sum_w (|Q_1(v,w)|+|Q_1(w,v)|) d_w-f_v=\sum_{w}\Cartan_{v,w}d_{w}-f_v.
\end{equation}

One can rewrite \eqref{eq: dim BN correspondence} as 
\begin{equation}
\label{eq: dim BN correspondence new}
\dim \pnak{Q}{\dimvec,\dimvec-\uk; \framvec}=\dim \nak_{Q}(\dimvec; \framvec)-\sum_{v}k_v\left(k_v-k^0_v[\dimvec]\right).
\end{equation}

\begin{lemma}
\label{lem: k bound}
Assume that $D\in \nak_Q(\dimvec;\framvec)$ and $\dim \Hom(\C_v,D)=k_v$. Then $k_v\ge k^0_v[\dimvec]$ for all $v$.
\end{lemma}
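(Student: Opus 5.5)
We would prove the bound by a dimension count on a three-term complex at the vertex $v$, built from the moment map relation at $v$; stability makes its last map surjective. Fix a stable representative $(A,B,i,j)$ of $D$ in $\mu^{-1}(0)^{\st}$. Let $M_v$ be the direct sum of one copy of $D_{\target(e)}$ for each arrow $e$ with $\source(e)=v$, one copy of $D_{\source(e)}$ for each arrow $e$ with $\target(e)=v$, and $F_v$. A loop at $v$ contributes two copies of $D_v$, one for each role it plays. Hence
\[
\dim M_v=\sum_w\left(|Q_1(v,w)|+|Q_1(w,v)|\right)d_w+f_v .
\]

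We consider the maps
\[
D_v\xrightarrow{\ \sigma\ } M_v\xrightarrow{\ \tau\ } D_v .
\]
Here $\sigma$ has components $A_e$ for $\source(e)=v$, $B_e$ for $\target(e)=v$, and $j_v$. The map $\tau$ has components $-B_e$ on the summands indexed by arrows out of $v$, $A_e$ on those indexed by arrows into $v$, and $i_v$ on $F_v$. The signs are chosen so that $\tau\circ\sigma$ equals the $v$-component \eqref{eq:moment-map-vertex} of $\mu$, which vanishes. We only need to check the sign bookkeeping for loops, where both kinds of summand occur; it is routine. By Lemma \ref{lem:easy hom}(a), $\ker\sigma=\Hom(\C_v,D)$.

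The key step is that $\tau$ is surjective. Set $D'_v:=\operatorname{im}\tau$ and $D'_w:=D_w$ for $w\neq v$. The only maps with target $D_v$ are $A_e$ for $\target(e)=v$, $B_e$ for $\source(e)=v$ (this includes both maps of each loop), and $i_v$. All of these have image in $\operatorname{im}\tau$ by construction, so $(D'_w)$ is stable under $A$ and $B$ and contains $i(F)$. Stability (Definition \ref{def: stability}) then forces $D'_v=D_v$.

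It remains to count dimensions. From $\tau\circ\sigma=0$ we get $\operatorname{im}\sigma\subseteq\ker\tau$, and surjectivity of $\tau$ gives $\dim\ker\tau=\dim M_v-d_v$. Therefore
\[
k_v=\dim\ker\sigma=d_v-\operatorname{rk}\sigma\ \ge\ 2d_v-\dim M_v=k_v^0[\dimvec].
\]
The main point needing care is the surjectivity of $\tau$, specifically making sure the loops at $v$ are handled correctly there. The rest is linear algebra.
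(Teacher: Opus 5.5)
Your proof is correct and follows the paper's argument essentially verbatim. Your $\sigma$ and $\tau$ are the paper's $M_1$ and $M_2$ on $\Xi_v$ (with the overall sign of $\tau$ flipped, which is harmless), and the dimension count via $\Imm\sigma\subseteq\ker\tau$ is the same; you also spell out the stability argument for surjectivity of $\tau$ (taking $D'_v=\Imm\tau$ and $D'_w=D_w$ for $w\neq v$), which the paper only asserts.
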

\begin{proof}
Consider the vector space 
$$
\Xi_v=\bigoplus_w D_w^{|Q_1(v,w)| + |Q_1(w,v)|} \oplus \bigoplus F_v.
$$
Note that 
$$
\dim \Xi_v=\sum_w (|Q_1(v,w)|+|Q_1(w,v)|) d_w+f_v.
$$
We have linear maps $M_1:D_v\to \Xi_v$ with components $(A_e,B_e,j_v)$ and $M_2:\Xi_v\to D_v$ with components $(B_e,-A_e,i_v)$. Note that $\Hom(\C_v, D) \cong \ker(M_1)$, cf. Lemma \ref{lem:easy hom}.  
The stability condition implies that $M_2$ is surjective, and the moment map equation implies $M_2M_1=0$.

We have $\dim \Imm M_2=d_v$, so $\dim\ker M_2=\dim \Xi_v-d_v$. Furthermore, $\Imm M_1\subset \ker M_2$, so $\dim \Imm M_1\le \dim \Xi_v-d_v$ and 
$$
k_v=\dim \ker M_1\ge d_v-(\dim \Xi_v-d_v)=2d_v-\dim \Xi_v=k^0_v[\dimvec].
$$
\end{proof}

\begin{remark}
    Note that the vector $\uk^0[\dimvec]$ may have negative coordinates. For example, if $Q$ is the Jordan quiver with framing $f$ then $\uk^{0}[d] = -f$. If $Q$ is the quiver with a single vertex and no arrows, $\uk^{0}[d] = 2d-f$, which may be negative. Note also that if $\affnak{Q}{\dimvec; \framvec}^{\reg}$ is nonempty (that is, if there exists a closed $G_{\dimvec}$-orbit with trivial stabilizer in $\mu^{-1}(0)$) then \cite[Lemma 4.7]{nakajima-km} implies that $\uk^{0}[\dimvec] \in \Z_{\leq 0}^{Q_0}$.
\end{remark}

\begin{lemma}
\label{lem: expected dimension}
Assume $k_v\ge k^0_v[\dimvec]$ for all $v$, then all components of $\BN_Q^{\uk}(\dimvec;\framvec)$ have dimension greater than or equal to \eqref{eq: dim BN correspondence}. If this expected dimension is achieved at each point, then $\BN_Q^{\uk}(\dimvec;\framvec)$ is Cohen–Macaulay.
\end{lemma}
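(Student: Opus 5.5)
The approach is to realize $\BN_Q^{\uk}(\dimvec;\framvec)$ locally as a degeneracy locus of a map of vector bundles on the smooth variety $\nak_Q(\dimvec;\framvec)$. Then the lower bound on dimension and the Cohen--Macaulay property both follow from the standard theory of determinantal varieties (Eagon--Northcott / Kempf, see e.g. Fulton's \emph{Intersection theory}, Ch.~14). The key input is the two-step complex from the proof of Lemma \ref{lem: k bound}. On $\nak_Q(\dimvec;\framvec)$ consider the tautological bundles $\CD_w$. For each vertex $v$, form the bundle
\[
\Xi_v=\bigoplus_w \CD_w^{|Q_1(v,w)|+|Q_1(w,v)|}\oplus F_v\otimes\mathcal{O},
\]
together with the bundle maps $M_1\colon\CD_v\to\Xi_v$ and $M_2\colon\Xi_v\to\CD_v$. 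By stability $M_2$ is surjective, and by the moment map relation $M_2M_1=0$. Hence $M_1$ factors through the vector bundle $\mathcal{E}_v:=\ker M_2$, which has rank $\dim\Xi_v-d_v=d_v-k^0_v[\dimvec]$. Pointwise, Lemma \ref{lem:easy hom}(a) identifies $\Hom(\C_v,D)$ with $\ker M_1=\ker(\CD_v\to\mathcal{E}_v)$. Thus $\BN_Q^{k_v\can_v}$ is the locus where the map $\phi_v\colon\CD_v\to\mathcal{E}_v$, from a bundle of rank $d_v$ to one of rank $d_v-k^0_v$, has rank at most $d_v-k_v$.

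Next I would take the product over vertices. Let $\phi=\bigoplus_v\phi_v$ be the map of bundles on $\nak_Q(\dimvec;\framvec)$. Then $\BN_Q^{\uk}(\dimvec;\framvec)$ is the locus where $\operatorname{rk}\phi_v\le d_v-k_v$ for every $v$ simultaneously. Locally, after trivializing, this is the preimage of the product
\[
\prod_v Z_v,\qquad Z_v=\{\text{$d_v\times(d_v-k^0_v)$ matrices of rank}\le d_v-k_v\},
\]
under a morphism from the smooth variety $\nak_Q(\dimvec;\framvec)$ to the affine space $\prod_v\Hom(\C^{d_v},\C^{d_v-k^0_v})$. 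This uses the hypothesis $k_v\ge k^0_v$, which makes the rank bound meaningful. Each $Z_v$ is an irreducible Cohen--Macaulay determinantal variety of codimension
\[
(d_v-(d_v-k_v))\bigl((d_v-k^0_v)-(d_v-k_v)\bigr)=k_v(k_v-k^0_v),
\]
so $\prod_v Z_v$ is Cohen--Macaulay of codimension $\sum_v k_v(k_v-k^0_v)$. I would then use the graph embedding to write the preimage as the intersection of the smooth graph with $\nak_Q\times\prod_v Z_v$ inside a smooth ambient space. The intersection dimension inequality then gives that every component of $\BN_Q^{\uk}$ has codimension at most $\sum_v k_v(k_v-k^0_v)$. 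By \eqref{eq: dim BN correspondence new}, this is exactly the statement that every component has dimension at least \eqref{eq: dim BN correspondence}.

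For the Cohen--Macaulay claim, suppose the expected dimension is achieved at each point. The graph of a morphism into affine space is cut out locally by a regular sequence in $\nak_Q\times\prod_v Z_v$, namely the coordinates $x-\phi(p)$, whose number equals the dimension of the affine space. The quotient of a Cohen--Macaulay local ring by elements that drop dimension by exactly their number is again Cohen--Macaulay. In that case the elements form a regular sequence, and $\BN_Q^{\uk}$ is Cohen--Macaulay. This is precisely the standard statement that a degeneracy locus of the correct codimension is Cohen--Macaulay; see Fulton, Thm.~14.3(c).

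The main obstacle I expect is verifying that $\ker M_2$ is genuinely a vector bundle and that $\phi_v$ captures the scheme structure. Surjectivity of $M_2$ at every stable point gives constant rank, so the bundle property should be fine. For the scheme structure, I would simply define the scheme structure on $\BN_Q^{\uk}$ via these minors, with the Cartan-type count of ranks checked against \eqref{eq: def k0}.
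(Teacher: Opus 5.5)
Your proposal is correct and follows the paper's own proof. The paper likewise views $\BN_Q^{\uk}(\dimvec;\framvec)$ as the locus where $M_1\colon D_v\to\ker M_2$ has rank at most $d_v-k_v$, computes the expected codimension $\sum_v k_v(k_v-k^0_v[\dimvec])$, and invokes the determinantal Cohen--Macaulay criterion (Eisenbud, Theorem 18.18). Your additions are the globalization to vector bundles and the product-plus-graph reduction; these make explicit how the conditions at all vertices are imposed simultaneously, a step that a single-matrix citation like Eisenbud's theorem does not literally cover.
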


\begin{proof}
We follow the proof of Lemma \ref{lem: k bound} and consider the matrix $M_1$ from $D_v$ to $\ker M_2$. We have $\dim \ker M_1\ge k_v$ if and only if $\rk M_1\le d_v-k_v$. The expected codimension of this locus is 
$$
(d_v-(d_v-k_v))(\dim \ker M_2-(d_v-k_v))=k_v(\dim \Xi_v-2d_v+k_v)=k_v\left(k_v-k^0_v[\dimvec]\right)
$$
and the expected codimension of the intersection of such loci over all $v$ is
$
\sum_{v} k_v(k_v-k^0_v[\dimvec]).
$
Now the result follows from \cite[Theorem 18.18]{Eis}.
\end{proof}

\subsection{The first projection}

Now we describe the relationship between the quiver Brill-Noether variety $\BN^{\uk}(\dimvec; \framvec)$ and the split parabolic quiver varieties, which is essentially given by the maps $p_{-}$ and $p_{+}$ from \eqref{eq: def p and q}.  First, we describe the geometric properties of the map $p_{-}$. Recall that $p_{-}$ sends a pair of representations $\left(D,\widehat{D}\right)$ to $D$.

\begin{lemma}
\label{lem: BN projection 1}
a) The image of $p_{-}$ is $\BN_Q^{\uk}(\dimvec;\framvec)$.
In particular, $\pnak{Q}{\dimvec,\dimvec-\uk; \framvec}$ is nonempty if and only if $\BN_Q^{\uk}(\dimvec;\framvec)$ is nonempty.

b) The fiber of $p_{-}$ over $\BN_Q^{=\ur}(\dimvec;\framvec)$ is isomorphic to $\prod_{v}\Gr(k_v,r_v)$. In particular, $p_{-}$ is proper.
\end{lemma}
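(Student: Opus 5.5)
The plan is to use the description of $\pnak{Q}{\dimvec,\dimvec-\uk;\framvec}$ as $\parnak{}{}{}$-type data from Definition \ref{def: parabolic-variety-1}, with $P_{\uk}$ replaced by the block parabolic preserving a $k_v$-dimensional subspace (as in the corollary computing \eqref{eq: dim BN correspondence}). Thus a point of $\pnak{Q}{\dimvec,\dimvec-\uk;\framvec}$ is a point $[(A,B,i,j)]\in\nak_Q(\dimvec;\framvec)$ together with subspaces $K_v\subseteq D_v$ of dimension $k_v$ such that $K\oplus 0$ is a trivial subrepresentation. By Lemma \ref{lem:easy hom}(a), this last condition says exactly
\[
K_v\subseteq \bigcap_{\source(e)=v}\ker A_e\cap\bigcap_{\target(e)=v}\ker B_e\cap \ker j_v=\Hom(\C_v,D).
\]

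For (a), first take $(D,\widehat D)$ in the source. The kernel $K_v$ of $\pi_v$ has dimension $k_v$ and lies in $\Hom(\C_v,D)$, so $D\in\BN^{\uk}_Q(\dimvec;\framvec)$. Conversely, given $D$ with $\dim\Hom(\C_v,D)\ge k_v$ for all $v$, choose $K_v\subseteq\Hom(\C_v,D)$ of dimension $k_v$. Then $K\oplus0$ is a subrepresentation, so $D/K$ is a $\double{\framed{Q}}$-module, and the quotient maps make \eqref{eq:compatibility-1} and \eqref{eq:compatibility-2} commute. The moment map equations descend to the quotient, as in the proof of Proposition \ref{prop: isomorphism}, and $D/K$ is stable by Lemma \ref{lem:stable-quotient}. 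Hence $(D,D/K)\in\pnak{Q}{\dimvec,\dimvec-\uk;\framvec}$ maps to $D$. The nonemptiness statement follows immediately.

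For (b), fix $D\in\BN^{=\ur}_Q(\dimvec;\framvec)$. By Lemma \ref{lem: quiver-variety-embedding} and the argument in (a), points of the fiber $p_-^{-1}(D)$ correspond bijectively to tuples $(K_v)_v$ with $K_v\subseteq\Hom(\C_v,D)$ and $\dim K_v=k_v$. Indeed, $\widehat D=D/K$ is determined by $K$, and the surjection $\pi$ determines $K$. This gives a bijection with $\prod_v\Gr(k_v,r_v)$.

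To upgrade the bijection to an isomorphism of varieties, I will work on the level of the prequotient. Fix a lift of $D$ in $\mu^{-1}(0)^{\st}$; its stabilizer in $G_{\dimvec}$ is trivial. The fiber is then the quotient of the set of surjections $\pi_v:D_v\to\widehat D_v$ with $\ker\pi_v\subseteq\Hom(\C_v,D)$ by $G_{\dimvec-\uk}$. This quotient is the open Stiefel-type quotient realizing $\prod_v\Gr(d_v-k_v,\,D_v/\cdots)$, which is identified with the Grassmannian of kernels inside $\Hom(\C_v,D)$. The scheme-theoretic identification is the step I expect to need the most care. To handle it, I would compute the tangent space to the fiber via the complex in Theorem \ref{thm: smooth}, restricted to variations with $(A,B,i,j)$ fixed, and check that it equals $\bigoplus_v\Hom(K_v,\Hom(\C_v,D)/K_v)$, so that the reduced structure suffices.

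Properness comes from two observations. First, $\pnak{Q}{\dimvec,\dimvec-\uk;\framvec}$ embeds as a closed subvariety of the relative Grassmannian bundle $\prod_v\Gr(k_v,\CD_v)$ over $\nak_Q(\dimvec;\framvec)$: the condition $K_v\subseteq\ker M_1$, with $M_1$ as in the proof of Lemma \ref{lem: k bound}, is a closed condition on a Grassmannian bundle. Second, the projection of this Grassmannian bundle to the base is proper. Therefore $p_-$ is proper.
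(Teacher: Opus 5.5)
Your proposal is correct and follows essentially the same route as the paper: points of $\pnak{Q}{\dimvec,\dimvec-\uk;\framvec}$ are identified with trivial subrepresentations $K\subseteq D$, i.e.\ with subspaces $K_v\subseteq\Hom(\C_v,D)$ of dimension $k_v$, and Lemma \ref{lem:stable-quotient} guarantees that $D/K$ is automatically stable. Your realization of $\pnak{Q}{\dimvec,\dimvec-\uk;\framvec}$ as a closed subvariety of the relative Grassmannian bundle $\prod_v\Gr(k_v,\CD_v)$ over $\nak_Q(\dimvec;\framvec)$ is a worthwhile addition: the paper only asserts properness ``in particular'' from the fiber description, and projective fibers alone would not suffice; your embedding also gives the identification of the fibers with $\prod_v\Gr(k_v,r_v)$ directly, so the separate tangent-space check you outline is not really needed.
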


\begin{proof}
Recall that $\pnak{Q}{\dimvec,\dimvec-\uk; \framvec}$ parametrizes short exact sequences of representations of $\double{\framed{Q}}$:
\begin{equation}
\label{eq: exact seq}
0\to K\hookrightarrow D\twoheadrightarrow \widehat{D}\to 0
\end{equation}
where $K$ is trivial and $\dim K_v=k_v$. Furthermore, by Lemma \ref{lem:stable-quotient} if $D$ is stable then $\widehat{D}$ is automatically stable, so it is enough to consider the embeddings $K\hookrightarrow D$.

Assume that $\dim \Hom(\C_v,D)=r_v$, then \eqref{eq: exact seq} is possible only if $r_v\ge k_v$. Conversely, if $r_v\ge k_v$ then a choice of $K$ is equivalent to a choice of subspaces
$$
K_v\subset 
\bigcap_{\substack{e \in Q_1 \\ \source(e) = v}} \ker\left(A_e\right) \cap \bigcap_{\substack{e \in Q_1 \\ \target(e) = v}} \ker\left(B_e\right) \cap \ker\left(j_v\right).
$$
Therefore the fiber of $p_{-}$ is isomorphic to
$$ 
\left(p_{-}\right)^{-1}(D)=\prod_v \Gr(k_v,r_v).
$$
\end{proof}

\begin{theorem}
\label{thm: BN projection 2}
Assume $\BN_Q^{=\uk}(\dimvec;\framvec)$ is not empty.
 Then:

a) The map $p_{-}$ is  an isomorphism over $\BN_Q^{=\uk}(\dimvec;\framvec)$.

b) The locus $\BN_Q^{=\uk}(\dimvec;\framvec)$ is smooth and dense in $\BN_Q^{\uk}(\dimvec;\framvec)$, and both have dimension \eqref{eq: dim BN correspondence new}.

c) The space $\BN_Q^{\uk}(\dimvec;\framvec)$ is  Cohen-Macaulay.
\end{theorem}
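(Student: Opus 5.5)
The plan is to handle the three parts in order: (a) by constructing an inverse to $p_{-}$, then (b) by a stratification and dimension count, and (c) by quoting Lemma \ref{lem: expected dimension}.

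\emph{Part (a).} By Lemma \ref{lem: BN projection 1}(b), the fiber of $p_{-}$ over a point of $\BN_Q^{=\uk}(\dimvec;\framvec)$ is $\prod_v \Gr(k_v,k_v)$, which is a single point. So $p_{-}$ restricts to a bijective proper morphism
$$U:=p_{-}^{-1}\left(\BN_Q^{=\uk}(\dimvec;\framvec)\right)\to \BN_Q^{=\uk}(\dimvec;\framvec).$$
To see that it is an isomorphism, I will construct an inverse morphism.

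\begin{enumerate}
\item Over $\nak_Q(\dimvec;\framvec)$, the maps $M_1$ from the proof of Lemma \ref{lem: k bound} globalize to bundle maps $\CD_v\to \Xi_v$ between tautological bundles.
\item On the locally closed subvariety $\BN_Q^{=\uk}(\dimvec;\framvec)$ (with its reduced structure), these maps have constant rank $d_v-k_v$.
\item Hence their kernels form a rank-$k_v$ subbundle $\CK_v\subset\CD_v$. By Lemma \ref{lem:easy hom}(a) this subbundle is a trivial subrepresentation.
\item Taking the quotient $\CD_v/\CK_v$ yields a family of pairs $(D,\widehat{D})$. The quotient is stable by Lemma \ref{lem:stable-quotient}.
\item This family gives a morphism $\BN_Q^{=\uk}(\dimvec;\framvec)\to U$ inverse to $p_{-}$.
\end{enumerate}

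Since $U$ is open in the smooth variety $\pnak{Q}{\dimvec,\dimvec-\uk;\framvec}$ (Theorem \ref{thm: smooth} and its corollaries), it follows that $\BN_Q^{=\uk}(\dimvec;\framvec)$ is smooth of pure dimension \eqref{eq: dim BN correspondence new}.

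This step is the main delicate point. The issue is that a bijective proper morphism onto a possibly non-normal target need not be an isomorphism. The constant-rank kernel argument is how I intend to get around this. If needed, I would instead argue that $U\to \BN^{=\uk}$ is a closed immersion into $\nak_Q(\dimvec;\framvec)$, by checking injectivity of the differential with the tangent complex of Theorem \ref{thm: smooth}.

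\emph{Part (b).} Write $e:=\dim \nak_Q(\dimvec;\framvec)-\sum_v k_v(k_v-k_v^0)$ for the expected dimension. Decompose
$$\BN_Q^{\uk}(\dimvec;\framvec)=\bigsqcup_{\ur\ge\uk}\BN_Q^{=\ur}(\dimvec;\framvec).$$
This is a finite union, since $r_v\le d_v$. Applying part (a) with $\ur$ in place of $\uk$, each nonempty stratum has dimension
$$\dim \nak_Q(\dimvec;\framvec)-\sum_v r_v(r_v-k_v^0).$$
For $\ur\ge\uk$ with $\ur\neq\uk$ we have
$$\sum_v\left[r_v(r_v-k_v^0)-k_v(k_v-k_v^0)\right]=\sum_v (r_v-k_v)(r_v+k_v-k_v^0)>0,$$
because $k_v\ge k_v^0$ forces $r_v+k_v-k_v^0\ge r_v>0$ whenever $r_v>k_v$. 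So every other stratum has dimension strictly less than $e$.

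On the other hand, Lemma \ref{lem: expected dimension} says every irreducible component of $\BN_Q^{\uk}(\dimvec;\framvec)$ has dimension at least $e$. Such a component cannot lie in the union of the finitely many strata with $\ur\ne\uk$. Its generic point therefore lies in the open subset $\BN_Q^{=\uk}(\dimvec;\framvec)$, which proves density. Since $\BN_Q^{=\uk}(\dimvec;\framvec)$ has pure dimension $e$, every component has dimension exactly $e$.

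\emph{Part (c).} By part (b), $\BN_Q^{\uk}(\dimvec;\framvec)$ has the expected dimension at every point. The Cohen--Macaulay property then follows directly from the second statement of Lemma \ref{lem: expected dimension}.
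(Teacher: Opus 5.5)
Your proof is correct, and its overall structure matches the paper's: an isomorphism over the open stratum, the stratification by $\ur\ge\uk$, and Lemma \ref{lem: expected dimension} for the lower bound and for Cohen--Macaulayness. Two steps are justified differently.

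For (a), the paper reads off ``isomorphism'' directly from Lemma \ref{lem: BN projection 1}(b), i.e.\ from the fibers $\prod_v\Gr(k_v,k_v)$ being points. Your kernel-subbundle construction gives an actual inverse morphism on the reduced locus $\BN_Q^{=\uk}(\dimvec;\framvec)$. That is exactly what is needed to upgrade a bijective proper map to an isomorphism. You should cite Lemma \ref{lem: k bound} together with the nonemptiness hypothesis for the inequality $k_v\ge k_v^0[\dimvec]$ that you use afterwards.

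For the remaining strata, the paper uses the fiber dimension of the single map $p_{-}$ to get
\[
\dim\BN_Q^{=\ur}(\dimvec;\framvec)\le\dim\pnak{Q}{\dimvec,\dimvec-\uk;\framvec}-\sum_v k_v(r_v-k_v).
\]
It deduces the exact dimension and the Cohen--Macaulay property first, and density afterwards. You instead apply (a) to each nonempty $\BN_Q^{=\ur}(\dimvec;\framvec)$, which gives the exact value $\dim\nak_Q(\dimvec;\framvec)-\sum_v r_v(r_v-k_v^0[\dimvec])$. You then compare exact values and obtain density before the dimension statement.

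Your route gives strictness in every case. The paper's displayed bound is not strict when $\ur$ exceeds $\uk$ only at vertices with $k_v=0$ (for instance when $\uk=0$). So the paper's claim that these strata have strictly smaller dimension implicitly relies on the exact formula for each stratum. Your argument makes that explicit, and the paper itself invokes the same formula later, in Lemma \ref{lem: dim preimage}.
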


\begin{proof}
By Lemma \ref{lem: k bound} we have $k_v\ge k^0_v[\dimvec]$ for all $v$. By Lemma \ref{lem: BN projection 1} the map $p_{-}$ is an isomorphism over $\BN_Q^{=\uk}(\dimvec;\framvec)$ (which is open in $\BN_Q^{\uk}(\dimvec;\framvec)$), so $\BN_Q^{=\uk}(\dimvec;\framvec)$ is smooth of dimension
$$
\dim\BN_Q^{=\uk}(\dimvec;\framvec)= \dim\pnak{Q}{\dimvec,\dimvec-\uk; \framvec}.
$$
Furthermore, for $\ur\ge \uk$ we get 
$$
\dim \BN_Q^{=\ur}(\dimvec;\framvec)\le \dim\pnak{Q}{\dimvec,\dimvec-\uk; \framvec}-\sum_v k_v(r_v-k_v).
$$
In particular, all components of $\BN_Q^{\uk}(\dimvec;\framvec)$ have dimension at most $\dim \BN_Q^{=\underline{k}}(\dimvec;\framvec)=\dim\pnak{Q}{\dimvec,\dimvec-\uk; \framvec}$. By Lemma \ref{lem: expected dimension} this implies that all components of $\BN_Q^{\uk}(\dimvec;\framvec)$ have dimension exactly $\dim\pnak{Q}{\dimvec,\dimvec-\uk; \framvec}$, and thus $\BN_Q^{\uk}(\dimvec;\framvec)$ is Cohen-Macaulay.

Furthermore, for $r_v\ge k_v$ and $r_v\neq k_v$ the closure of $\BN_Q^{=\ur}(\dimvec;\framvec)$ has strictly smaller dimension, so its closure could not be an irreducible component. Therefore each irreducible component of $\BN_Q^{\uk}(\dimvec;\framvec)$ is the closure of an irreducible component of $\BN_Q^{=\uk}(\dimvec;\framvec)$. This implies that $\BN_Q^{=\uk}(\dimvec;\framvec)$ is dense in $\BN_Q^{\uk}(\dimvec;\framvec)$.
\end{proof}

\begin{remark}
    When $Q$ is the Jordan quiver Theorem \ref{thm: BN projection 2} is contained in \cite[Proposition 3.23]{NakajimaYoshioka2}. In this case, that the Brill-Noether locus is Cohen-Macaulay is also proved in \cite[Theorem 1.2]{Bayer}.
\end{remark}

\begin{definition}
Let $\underline{m},\underline{n}$ be two integer vectors in $\Z^{Q_0}$. We denote by $\max(\underline{m},\underline{n})$ the integer vector in $\Z^{Q_0}$ with components $\max(\underline{m},\underline{n})_v=\max(m_v,n_v)$.
\end{definition}

By Lemma \ref{lem: k bound} we have  
$$
\BN^{\uk}_Q(\dimvec;\framvec)=\BN^{\max\left(\uk,\uk^0[\dimvec]\right)}_Q(\dimvec;\framvec)
$$
and without loss of generality we can assume $\uk\ge \uk^0[\dimvec]$.

\begin{lemma}
\label{lem: dim preimage}
Assume $r_v\ge k_v\ge \max\left(k_v^0[\dimvec],0\right)$ for all $v$, and $\BN^{=\ur}_Q(\dimvec;\framvec)$ is not empty. Then
$$
\dim \left(p_{-}\right)^{-1}\left(\BN^{=\ur}_Q\left(\dimvec;\framvec\right)\right)<\dim \pnak{Q}{\dimvec,\dimvec-\uk; \framvec}
$$
unless $r_v=k_v$ for all $v$.
\end{lemma}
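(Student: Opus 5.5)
By Lemma \ref{lem: BN projection 1}(b), the preimage $\left(p_{-}\right)^{-1}\left(\BN^{=\ur}_Q(\dimvec;\framvec)\right)$ is a fibration over $\BN^{=\ur}_Q(\dimvec;\framvec)$ with fiber $\prod_v \Gr(k_v,r_v)$. So
$$
\dim \left(p_{-}\right)^{-1}\left(\BN^{=\ur}_Q(\dimvec;\framvec)\right)=\dim \BN^{=\ur}_Q(\dimvec;\framvec)+\sum_v k_v(r_v-k_v).
$$
It therefore suffices to bound $\dim \BN^{=\ur}_Q(\dimvec;\framvec)$ from above. I will compare it with $\pnak{Q}{\dimvec,\dimvec-\ur;\framvec}$ rather than with the correspondence for $\uk$.

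Since $\BN^{=\ur}_Q(\dimvec;\framvec)$ is nonempty and $r_v\ge k_v\ge 0$, the variety $\pnak{Q}{\dimvec,\dimvec-\ur;\framvec}$ is nonempty. By Theorem \ref{thm: BN projection 2}(a) applied with $\ur$ in place of $\uk$, $p_{-}$ identifies an open subset of it with $\BN^{=\ur}_Q(\dimvec;\framvec)$. (The hypothesis $r_v\ge k^0_v[\dimvec]$ holds automatically by Lemma \ref{lem: k bound}.) Formula \eqref{eq: dim BN correspondence new} then gives
$$
\dim \BN^{=\ur}_Q(\dimvec;\framvec)\le \dim\nak_Q(\dimvec;\framvec)-\sum_v r_v\left(r_v-k^0_v\right),
$$
with $k^0_v=k^0_v[\dimvec]$. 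Combining this with the fiber dimension and \eqref{eq: dim BN correspondence new} for $\uk$, the claimed strict inequality reduces to the elementary inequality
$$
\sum_v \left[r_v(r_v-k^0_v)-k_v(r_v-k_v)-k_v(k_v-k^0_v)\right]>0 \quad\text{unless } \ur=\uk.
$$

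The summand at $v$ simplifies to $(r_v-k_v)(r_v-k^0_v)$. Each factor is nonnegative because $r_v\ge k_v\ge k^0_v$. If $r_v>k_v$, then $r_v-k^0_v\ge r_v-k_v>0$, so that summand is strictly positive. Hence the sum is positive as soon as some $r_v\neq k_v$, which proves the lemma.

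The only delicate point is the dimension bound for $\BN^{=\ur}$. Theorem \ref{thm: BN projection 2} is stated with $\BN^{=\uk}$ nonempty, and I apply it with $\ur$, where nonemptiness holds by assumption. For this reason it is better to bound $\dim\BN^{=\ur}$ via the $\ur$-correspondence directly, rather than via the inequality displayed inside the proof of Theorem \ref{thm: BN projection 2}, which is essentially what is being proved here. The hypothesis $k_v\ge 0$ is needed only so that the Grassmannians $\Gr(k_v,r_v)$ make sense.
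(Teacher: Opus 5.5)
Your proof is correct and follows the paper's argument. The paper likewise applies Theorem \ref{thm: BN projection 2} with $\ur$ in place of $\uk$ to get $\dim\BN^{=\ur}_Q(\dimvec;\framvec)=\dim\nak_Q(\dimvec;\framvec)-\sum_v r_v\left(r_v-k^0_v[\dimvec]\right)$, adds the Grassmannian fiber dimension $\sum_v k_v(r_v-k_v)$, and reduces to the same positivity of $\sum_v (r_v-k_v)\left(r_v-k^0_v[\dimvec]\right)$ unless $\ur=\uk$. Your use of only the upper bound, via the open embedding into $\pnak{Q}{\dimvec,\dimvec-\ur;\framvec}$, instead of the equality is a harmless variant.
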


\begin{proof}
By Theorem  \ref{thm: BN projection 2} we have 
$$\dim \BN^{=\ur}_Q\left(\dimvec;\framvec\right)
=\dim \nak_Q(\dimvec;\framvec)-\sum_{v}r_v\left(r_v-k^0_v[\dimvec]\right).
$$
On the other hand, the fiber of $p_{-}$ over   $\BN^{=\ur}_Q(\dimvec;\framvec)$ is isomorphic to $\prod_v\Gr(k_v,r_v)$, so 
$$
\dim \left(p_{-}\right)^{-1}\left(\BN^{=\ur}_Q(\dimvec;\framvec)\right)=\dim \BN^{=\ur}_Q(\dimvec;\framvec)+\sum_v k_v(r_v-k_v)=
$$
$$
\dim \nak_Q(\dimvec;\framvec)-\sum_{v}r_v\left(r_v-k^0_v[\dimvec]\right)+\sum_v k_v(r_v-k_v)=
$$
$$
\dim \nak_Q(\dimvec;\framvec)-\sum_{v}k_v\left(k_v-k^0_v[\dimvec]\right)-\sum_{v}(r_v-k_v)\left(r_v-k_v^0[\dimvec]\right)=
$$
$$
\dim \pnak{Q}{\dimvec,\dimvec-\uk; \framvec}-\sum_{v}(r_v-k_v)\left(r_v-k_v^0[\dimvec]\right).
$$
By our assumption $r_v\ge k_v\ge k_v^0[\dimvec]$, so
\begin{equation}
\label{eq: r k equation}
\sum_{v}(r_v-k_v)\left(r_v-k_v^0[\dimvec]\right)\ge 0.
\end{equation}
The equality holds if and only if all terms in the left hand side of \eqref{eq: r k equation}  vanish. If $r_v=k_v^0[\dimvec]$ then $r_v=k_v$, so in fact $r_v=k_v$ for all $v$.
\end{proof}

\begin{theorem}
\label{thm: BN}
Assume $k_v\ge \max\left(k_v^0[\dimvec],0\right)$ for all $v$. Then:

a) $\BN^{=\uk}(\dimvec;\framvec)$ is nonempty if and only if $\BN^{\uk}(\dimvec;\framvec)$ is nonempty, in which case Theorem \ref{thm: BN projection 2} applies.

b) In particular, for $k_v= \max(k_v^0[\dimvec],0)$ the space $\BN^{=\uk}(\dimvec;\framvec)$ is open and dense in $\BN^{\uk}(\dimvec;\framvec)=\nak_Q(\dimvec;\framvec)$.

c) The space $\pnak{Q}{\dimvec,\dimvec-\uk; \framvec}$ is a resolution of singularities of $\BN_Q^{\uk}(\dimvec;\framvec)$.
\end{theorem}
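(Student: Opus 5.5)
Part (a) is the real content; (b) and (c) will then follow from Theorem \ref{thm: BN projection 2} and Lemmas \ref{lem: BN projection 1} and \ref{lem: dim preimage}.

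For (a), one direction is trivial, since $\BN^{=\uk}\subseteq\BN^{\uk}$. For the other, suppose $\BN^{\uk}_Q(\dimvec;\framvec)$ is nonempty. By Lemma \ref{lem: BN projection 1}(a), the variety $\pnak{Q}{\dimvec,\dimvec-\uk;\framvec}$ is then nonempty. By the smoothness corollary, every component of it has pure dimension \eqref{eq: dim BN correspondence new}. Stratify this space by the preimages $(p_-)^{-1}(\BN^{=\ur}_Q(\dimvec;\framvec))$ for $\ur\ge\uk$. There are finitely many strata, because $r_v\le d_v$. They cover the space because the image of $p_-$ is $\BN^{\uk}$.

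The main step uses the dimension estimate from the proof of Theorem \ref{thm: BN projection 2}. That estimate needs only Lemma \ref{lem: expected dimension}, which gives a lower bound, combined with the upper bound obtained from $\BN^{=\ur}\cong$ an open piece of $\pnak{Q}{\dimvec,\dimvec-\ur;\framvec}$. We have $\ur\ge\uk\ge\uk^0[\dimvec]$, so the hypotheses of Lemma \ref{lem: dim preimage} hold, although that lemma formally assumes $\BN^{=\ur}$ nonempty. I will rederive its inequality directly. Whenever $\BN^{=\ur}$ is nonempty, the argument of Theorem \ref{thm: BN projection 2}(a),(b) applies to $\ur$ in place of $\uk$ and gives $\dim\BN^{=\ur}=\dim\nak_Q-\sum_v r_v(r_v-k^0_v)$. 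Adding the fiber dimension $\sum_v k_v(r_v-k_v)$ shows that the preimage has dimension strictly less than $\dim\pnak{Q}{\dimvec,\dimvec-\uk;\framvec}$ unless $\ur=\uk$. A nonempty variety of pure dimension $N$ cannot be a finite union of locally closed pieces of dimension $<N$. Hence the stratum $\ur=\uk$ is nonempty, so $\BN^{=\uk}\neq\emptyset$, and Theorem \ref{thm: BN projection 2} applies.

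The hard point is avoiding circularity: the formula for $\dim\BN^{=\ur}$ must come from applying Theorem \ref{thm: BN projection 2} to $\ur$, where nonemptiness of $\BN^{=\ur}$ is exactly what is assumed. This is fine stratum by stratum, since empty strata contribute nothing. For a nonempty stratum, even an inequality $\dim\BN^{=\ur}\le\dim\pnak{Q}{\dimvec,\dimvec-\ur;\framvec}$ suffices. That inequality holds because $p_-$ maps $\pnak{Q}{\dimvec,\dimvec-\ur;\framvec}$ isomorphically onto $\BN^{=\ur}$ over that locus, by Lemma \ref{lem: BN projection 1}(b) with fibers $\Gr(r_v,r_v)=\mathrm{pt}$.

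For (b), take $k_v=\max(k^0_v,0)$. Lemma \ref{lem: k bound} gives $\BN^{\uk}=\nak_Q(\dimvec;\framvec)$. If $\nak_Q$ is nonempty, then (a) together with Theorem \ref{thm: BN projection 2}(b) shows that $\BN^{=\uk}$ is open and dense. For (c), $\pnak{Q}{\dimvec,\dimvec-\uk;\framvec}$ is smooth by Theorem \ref{thm: smooth}, and $p_-$ is proper onto $\BN^{\uk}$ by Lemma \ref{lem: BN projection 1}. By (a) and Theorem \ref{thm: BN projection 2}(a), $p_-$ is an isomorphism over the dense open set $\BN^{=\uk}$, whose preimage is dense by the strict dimension inequality above. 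So $p_-$ is birational and proper from a smooth variety, i.e. a resolution of singularities. Irreducibility is not needed here; it is handled later in Theorem \ref{thm: irreducible}.
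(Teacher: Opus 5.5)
Your proposal is correct and follows essentially the same route as the paper's proof. Both stratify $\pnak{Q}{\dimvec,\dimvec-\uk;\framvec}$ by the preimages $(p_-)^{-1}(\BN^{=\ur}_Q(\dimvec;\framvec))$ and use the estimate of Lemma \ref{lem: dim preimage} (applied only to nonempty strata, which the paper does implicitly) to show that the strata with $\ur\neq\uk$ have positive codimension. From this, both conclude that $\BN^{=\uk}$ is nonempty with dense preimage, and that $p_-$ is a proper birational map from a smooth variety.
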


\begin{proof}
We have
$$
\BN^{\uk}(\dimvec;\framvec)=\bigsqcup_{\ur\ge \uk}\BN^{=\ur}(\dimvec;\framvec)
$$
and 
\begin{equation}
\label{eq: preimage decomposition}
\pnak{Q}{\dimvec,\dimvec-\uk; \framvec}=\bigsqcup_{\ur\ge \uk}\left(p_{-}\right)^{-1}\left(\BN^{=\ur}(\dimvec;\framvec)\right).
\end{equation}
By Lemma \ref{lem: dim preimage} all the terms in \eqref{eq: preimage decomposition} with $\ur\neq \uk$ have positive codimension in $\pnak{Q}{\dimvec,\dimvec-\uk; \framvec}$. This implies that $\BN^{=\uk}(\dimvec;\framvec)$ is not empty and $\left(p_{-}\right)^{-1}\left(\BN^{=\uk}(\dimvec;\framvec)\right)$ is dense in  $\pnak{Q}{\dimvec,\dimvec-\uk; \framvec}$.

Let $W_1=\left(p_{-}\right)^{-1}\left(\BN^{=\uk}(\dimvec;\framvec)\right)$ and $W_2=\BN^{=\uk}(\dimvec;\framvec)$. By Theorem \ref{thm: BN projection 2} and the above, $W_1$ is open and dense in $\pnak{Q}{\dimvec,\dimvec-\uk; \framvec}$, $W_2$ is open and dense in $\BN^{\uk}(\dimvec;\framvec)$ and $p_{-}: W_1\to W_2$ is an isomorphism, so $p_{-}$ is birational. Since it is proper and all singular points of $\BN^{\uk}(\dimvec;\framvec)$ are contained in the complement of $W_2$, $p_{-}$ is a resolution of singularities.
\end{proof}

\begin{corollary}
Suppose $k_v\ge 0$ are arbitrary and $\pnak{Q}{\dimvec,\dimvec-\uk;\framvec}$ is not empty. Then the locus $\BN^{=\max\left(\uk,\uk^0[\dimvec]\right)}_Q(\dimvec;\framvec)$ is not empty and dense in the image of $p$. The fiber of $p$ over this dense subset is isomorphic to the product $\prod_{v}\Gr\left(k_v,\max\left(k_v,k^0_v[\dimvec]\right)\right)$.
\end{corollary}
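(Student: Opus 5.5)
The plan is to reduce to Theorem \ref{thm: BN} by replacing $\uk$ with $\uk' := \max\left(\uk,\uk^0[\dimvec]\right)$. Here $p$ denotes $p_{-}$. First, Lemma \ref{lem: BN projection 1}(a) shows that the image of $p_{-}$ is $\BN^{\uk}_Q(\dimvec;\framvec)$. By Lemma \ref{lem: k bound}, every stable $D$ satisfies $\dim\Hom(\C_v,D)\ge k^0_v[\dimvec]$. Hence this image coincides with $\BN^{\uk'}_Q(\dimvec;\framvec)$, and it is nonempty because $\pnak{Q}{\dimvec,\dimvec-\uk;\framvec}$ is nonempty. Since $k_v\ge 0$, we also have $k'_v\ge\max\left(k^0_v[\dimvec],0\right)$, so Theorem \ref{thm: BN}(a) applies to $\uk'$. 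Consequently $\BN^{=\uk'}_Q(\dimvec;\framvec)$ is nonempty, and by Theorem \ref{thm: BN projection 2}(b) it is dense in $\BN^{\uk'}_Q(\dimvec;\framvec)$, which is the image of $p$.

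For the fiber statement, we apply Lemma \ref{lem: BN projection 1}(b) to the map $p_{-}$ for the pair $(\dimvec,\dimvec-\uk)$, with the stratum $\ur=\uk'$; this is legitimate because $\uk'\ge\uk$. The lemma gives that the fiber over any point of $\BN^{=\uk'}_Q(\dimvec;\framvec)$ is $\prod_v\Gr(k_v,k'_v)$. This is exactly the product $\prod_v\Gr\left(k_v,\max\left(k_v,k^0_v[\dimvec]\right)\right)$ asserted in the statement.

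The only point needing care is that Lemma \ref{lem: BN projection 1} is stated for the correspondence with the original $\uk$, whereas the density claim comes from Theorem \ref{thm: BN} applied to $\uk'$. These two results do not interact: density is a statement about subsets of $\nak_Q(\dimvec;\framvec)$ alone, and the fiber computation depends only on $\dim\Hom(\C_v,D)=k'_v$. I therefore expect no real obstacle. The single subtlety is to verify the hypothesis $k'_v\ge 0$ of Theorem \ref{thm: BN}, which holds because $k_v\ge 0$.
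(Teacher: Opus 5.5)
Your proposal is correct and follows the paper's approach. The paper gives no separate proof of this corollary; it is the immediate consequence of the identity $\BN^{\uk}_Q(\dimvec;\framvec)=\BN^{\max(\uk,\uk^0[\dimvec])}_Q(\dimvec;\framvec)$ from Lemma~\ref{lem: k bound}, of Theorem~\ref{thm: BN} applied to $\max(\uk,\uk^0[\dimvec])$, and of the fiber description in Lemma~\ref{lem: BN projection 1}(b), which is exactly your argument (the implicit bound $\max(k_v,k^0_v[\dimvec])\le d_v$ holds automatically because $\BN^{\uk'}_Q(\dimvec;\framvec)\neq\emptyset$ and $\Hom(\C_v,D)\subseteq D_v$).
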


\subsection{The second projection}

Next, we consider the second projection 
$$
p_{+}:\pnak{Q}{\dimvec,\dimvec-\uk;\framvec}\to \nak_Q(\dimvec-\uk;\framvec)
$$
which sends a pair $(D,\widehat{D})$ to $\widehat{D}$.

\begin{lemma}
\label{lem: second projection}
The image of $p_{+}$ is the quiver Brill-Noether locus 
$$\BN^{\max(\uk+\uk^0[\dimvec-\uk],0)}_{Q}(\dimvec-\uk;\framvec)
\subset \nak_Q(\dimvec-\uk;\framvec),$$ and the fiber of $p_{+}$ over $\BN^{=\ur}_Q(\dimvec-\uk;\framvec)$ is isomorphic to the product of Grassmannians 
$$
\prod_v \Gr\left(k_v,r_v-k^0_v[\dimvec-\uk]\right).
$$
In particular, the fiber of $p_{+}$  over
$\BN^{=\max(\uk+\uk^0[\dimvec-\uk],0)}_{Q}(\dimvec-\uk;\framvec)$ is the product of Grassmannians
$$
\prod_{v: k_v+k^0_v[\dimvec-\uk]<0} \Gr\left(k_v,-k^0_v[\dimvec-\uk]\right).
$$
\end{lemma}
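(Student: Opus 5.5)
The plan is to describe the fiber of $p_{+}$ over a fixed $\widehat{D}\in\nak_Q(\dimvec-\uk;\framvec)$ by linear algebra, classifying all extensions
$0\to K\to D\to \widehat{D}\to 0$ with $K$ trivial of dimension vector $\uk$ and $D$ stable satisfying the moment map equations. Such pairs are taken up to isomorphisms that are the identity on $\widehat{D}$, and the $\GL(K_v)$-action is used to forget the basis of $K$. As in the proof of Lemma \ref{lem: BN projection 1}, $\pnak{Q}{\dimvec,\dimvec-\uk;\framvec}$ parametrizes exactly these short exact sequences.

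\emph{Step 1: parametrize extensions.} Choose a vector space splitting $D_v=K_v\oplus\widehat{D}_v$. Since $A_e,B_e,j_v$ vanish on $K$ and are compatible with $\pi$, all structure maps of $D$ are determined by $\widehat{D}$ together with maps
\[
\alpha_e:\widehat{D}_{\source(e)}\to K_{\target(e)},\quad \beta_e:\widehat{D}_{\target(e)}\to K_{\source(e)},\quad \iota_v:F_v\to K_v .
\]
For each vertex $v$ these assemble into a single map $\psi_v:\widehat{\Xi}_v\to K_v$, where $\widehat{\Xi}_v$ is the space from the proof of Lemma \ref{lem: k bound} built from $\widehat{D}$. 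The components of $\psi_v$ are arranged as in $M_2$, namely $(\beta_e,-\alpha_e,\iota_v)$.

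\emph{Step 2: translate the conditions.} There are three conditions to translate, and I expect them to reduce as follows.
\begin{enumerate}
\item The $K_v$-component of the moment map equation at $v$ should read $\psi_v\circ\widehat{M}_1=0$, where $\widehat{M}_1:\widehat{D}_v\to\widehat{\Xi}_v$ is as in Lemma \ref{lem: k bound}. The $\widehat{D}$-component is automatic.
\item Changing the splitting by $\mathrm{id}+\phi$, with $\phi_v\in\Hom(\widehat{D}_v,K_v)$, should change $\psi_v$ by $\phi_v\circ\widehat{M}_2$. This modification vanishes on $\ker\widehat{M}_2$.
\end{enumerate}
Consequently the isomorphism class of the extension is exactly the element $\bar\psi_v\in\Hom(H_v,K_v)$, where
\[
H_v:=\ker\widehat{M}_2/\operatorname{Im}\widehat{M}_1 .
\]
Surjectivity of $\widehat{M}_2$, which comes from stability of $\widehat{D}$, shows that every such $\bar\psi$ arises.
\begin{enumerate}\setcounter{enumi}{2}
\item Stability of $D$ should be equivalent to surjectivity of every $\bar\psi_v$.
\end{enumerate}
To see this, let $S$ be the submodule generated by the $i(F_v)$. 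By stability of $\widehat{D}$, $S$ surjects onto $\widehat{D}$. One then checks that $S\cap K_v=\psi_v(\ker\widehat{M}_2)$: an element of $S$ mapping to $0$ in $\widehat{D}_v$ is a combination of incoming arrows and $i_v$ whose $\widehat{D}$-part vanishes. I expect this identification to be the main obstacle, since one must carefully check that longer paths contribute nothing beyond $\psi_v(\ker\widehat{M}_2)$. The point to use is that arrows kill $K$, so only the last arrow of a path can land nontrivially in $K$.

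\emph{Step 3: count and conclude.} Modulo $\GL(K_v)$, surjections $H_v\twoheadrightarrow K_v$ form $\Gr(k_v,\dim H_v)$ (quotients of dimension $k_v$), so the fiber is $\prod_v\Gr(k_v,\dim H_v)$. Since $\widehat{M}_2$ is surjective and $\ker\widehat{M}_1=\Hom(\C_v,\widehat{D})$ has dimension $r_v$,
\[
\dim H_v=(\dim\widehat{\Xi}_v-\widehat{d}_v)-(\widehat{d}_v-r_v)=r_v-k^0_v[\dimvec-\uk].
\]
This gives the stated fiber. The fiber is nonempty iff $r_v-k^0_v[\dimvec-\uk]\ge k_v$ for all $v$. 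Together with $r_v\ge0$, this identifies the image with $\BN^{\max(\uk+\uk^0[\dimvec-\uk],0)}_Q(\dimvec-\uk;\framvec)$. Specializing to $r_v=\max(k_v+k^0_v,0)$ gives the last claim: the Grassmannian is a point when $k_v+k^0_v\ge0$, and is $\Gr(k_v,-k^0_v)$ otherwise.
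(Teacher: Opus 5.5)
Your proposal is correct and is essentially the paper's argument. Your $\psi_v$ is just the $K_v$-component, in a chosen splitting, of the paper's lift $\widetilde{M}_2\colon\widehat{\Xi}_v\to D_v$, and $\ker\bar\psi_v\subset H_v$ is the paper's $\mathscr{K}_v/\Imm\widehat{M}_1$; the paper avoids the splitting and the gauge quotient by reconstructing $D_v=\widehat{\Xi}_v/\mathscr{K}_v$ directly. The step you flag as the main obstacle closes at once: since arrows kill $K$ and $S+K=D$, one has $S_v=\Imm\widetilde{M}_2$, hence $S_v\cap K_v=\widetilde{M}_2(\ker\widehat{M}_2)=\psi_v(\ker\widehat{M}_2)$, which is exactly the paper's equivalence between stability of $D$ and surjectivity of $\widetilde{M}_2$.
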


\begin{proof}
Assume we have $\left(\widehat{A}, \widehat{B}, \widehat{i}, \widehat{j}\right) \in \nak_Q(\dimvec - \uk; \framvec)$.  We follow the notations of Lemma \ref{lem: k bound}, and for each vertex $v$ consider the maps $\widehat{M}_1, \widehat{M}_2$:
\begin{equation}
\label{eq: M1 M2 extension}
\begin{tikzcd}
D_v \arrow[dotted]{dd}{\pi_v} & &  \\
 & & \\
\widehat{D}_{v} \arrow{rr}{\widehat{M}_1} & & \widehat{\Xi}_v  \arrow[bend left]{ll}{\widehat{M}_2} \arrow[dotted,swap]{uull}{\widetilde{M}_2}
\end{tikzcd}
\end{equation}
where 
$$
\widehat{\Xi}_v =\bigoplus_w \widehat{D}_w^{|Q_1(v,w)| + |Q_1(w,v)|} \oplus \bigoplus F_v.
$$
We claim that the fiber of $p_{+}$ over $\left(\widehat{A}, \widehat{B}, i, \widehat{j}\right)$ is in correspondence with the collection of maps $$\left(\widetilde{M}_2: \widehat{\Xi}_v  \to D_v\right)_{v}$$ such that $\widetilde{M}_2$ is surjective and $\widetilde{M}_2\widehat{M}_1=0$. Indeed, assume that $(A, B, i, j) \in \left(p_{+}\right)^{-1}\left(\widehat{A}, \widehat{B}, \widehat{i}, \widehat{j}\right)$. By definition, the each map $D_w \to D_v$ factors through $\widehat{D}_w$, so this data defines a map $\widetilde{M}_2: \widehat{\Xi}_v \to D_v$. The stability condition on $(A, B, i, j)$ implies that $\widetilde{M}_2$ is surjective, and the moment map equation translates to $\widetilde{M}_2\widehat{M}_1 = 0$. On the other hand, assume a surjective map $\widetilde{M}_2:\widehat{\Xi}_v  \to D_v$ exists for every $v \in Q_0$, such that $\widetilde{M}_2\widehat{M}_1 = 0$. This is equivalent to finding a subspace $\mathscr{K}_v := \ker\left(\widetilde{M}_2\right)$ such that 
$$
\Imm \widehat{M}_1\subset \mathscr{K}_v\subset \ker \widehat{M}_2,
$$
so that $D_v = \widehat{\Xi}_v/\mathscr{K}_v$ and $\pi_v$ corresponds to the projection:
$$
\pi_v:D_v=\widehat{\Xi}_v/\mathscr{K}_v\twoheadrightarrow \widehat{\Xi}_v/\ker\widehat{M}_2=\widehat{D}_{v},
$$
in particular $\dim \ker \widehat{M}_2/\mathscr{K}_v=k_v$. With this, let us construct an element $(A, B, i, j) \in \nak_Q(\dimvec, \dimvec - \uk; \framvec)$ such that $p_{+}(A, B, i, j) = \left(\widehat{A}, \widehat{B}, \widehat{i}, \widehat{j}\right)$. The map $j_v$ is simply $\widehat{j}_v \circ \pi_v$. The map $i_v$ is the composition $F_v \hookrightarrow \widehat{\Xi}_v\twoheadrightarrow D_v$. If we have an arrow $e: w \to v$ in $Q$ then $A_{e}$ is the composition $D_w \twoheadrightarrow \widehat{D}_w \hookrightarrow \widehat{\Xi}_v \twoheadrightarrow D_v$, and  $B_{e^{\ast}}$ is the composition $D_v \twoheadrightarrow \widehat{D}_v \hookrightarrow \widehat{\Xi}_w  \twoheadrightarrow D_w$. That $(A, B, i, j)$ satisfies the moment map equations is equivalent to $M_2\widehat{M}_1 = 0$, and the representation $(A, B, i, j)$ is stable because $\widetilde{M}_2$ is surjective. Finally, by definition the kernel of $A, B$ and $j$ each contain the kernel of $D_v \twoheadrightarrow \widehat{D}_v$, so $(A, B, i, j)$ indeed defines an element of $\pnak{Q}{\dimvec, \dimvec - \uk; \framvec}$, and it is clear that $p_{+}(A, B, i, j) = \left(\widehat{A}, \widehat{B}, \widehat{i}, \widehat{j}\right)$. 

Now, let $\ur \in \Z_{\geq 0}^{Q_0}$ and assume that $\left(\widehat{A}, \widehat{B}, \widehat{i}, \widehat{j}\right) \in \BN_{Q}^{=\ur}(\dimvec - \uk; \framvec)$, so that  $\dim \ker \widehat{M}_1=r_v$. Then,
$$
\dim \ker \widehat{M}_2-\dim \Imm \widehat{M}_1=\left(\dim \widehat{\Xi}_v-(d_v-k_v)\right)-\left((d_v - k_v)-r_v\right)=r_v-k^0_v[\dimvec-\uk].
$$
Therefore the lift \eqref{eq: M1 M2 extension} is possible if and only if $r_v-k^0_v[\dimvec-\uk]\ge k_v$, and in this case the fiber is the product of Grassmannians
$$
\prod_v \Gr\left(k_v,r_v-k^0_v[\dimvec-\uk]\right).
$$
\end{proof}

\begin{lemma}
\label{lem: second projection dim preimage}
Assume that $\ur\ge \uk+\uk^0[\dimvec-\uk]$ and $\BN^{=\ur}_Q(\dimvec-\uk;\framvec)$ is nonempty. Then 
$$
\dim \left(p_{+}\right)^{-1}\left(\BN^{=\ur}_Q(\dimvec-\uk;\framvec)\right)<\dim \pnak{Q}{\dimvec,\dimvec-\uk;\framvec}
$$
unless $\ur=\max(\uk+\uk^0[\dimvec-\uk],0)$.
\end{lemma}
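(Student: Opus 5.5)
The plan mirrors the proof of Lemma \ref{lem: dim preimage}, now applied to the fibration structure of $p_{+}$ described in Lemma \ref{lem: second projection}. Write $\uk^0:=\uk^0[\dimvec]$ and $\widehat{\uk}^0:=\uk^0[\dimvec-\uk]$. The first step is the linear identity
\[
\widehat{k}^0_v=\sum_w \Cartan_{v,w}(d_w-k_w)-f_v=k^0_v-(\Cartan\uk)_v,
\]
together with the formula
\[
\dim\nak_Q(\dimvec-\uk;\framvec)=\dim\nak_Q(\dimvec;\framvec)-2\framvec^T\uk+2\dimvec^T\Cartan\uk-\uk^T\Cartan\uk=\dim\nak_Q(\dimvec;\framvec)+2\uk^T\uk^0-\uk^T\Cartan\uk.
\]
This follows directly from \eqref{eq: dim nakajima rephrased} by expanding the quadratic form. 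It is also the identity that was deferred to this proof in Lemma \ref{lem: less than half}.

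Next I apply Theorem \ref{thm: BN projection 2} to $\nak_Q(\dimvec-\uk;\framvec)$. This is legitimate since $\BN^{=\ur}_Q(\dimvec-\uk;\framvec)$ is nonempty, and Lemma \ref{lem: k bound} gives $\ur\ge\widehat{\uk}^0$. We obtain
\[
\dim\BN^{=\ur}_Q(\dimvec-\uk;\framvec)=\dim\nak_Q(\dimvec-\uk;\framvec)-\sum_v r_v(r_v-\widehat{k}^0_v).
\]
By Lemma \ref{lem: second projection}, the fibre over this stratum is $\prod_v\Gr(k_v,r_v-\widehat{k}^0_v)$, of dimension $\sum_v k_v(r_v-\widehat{k}^0_v-k_v)$. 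Adding these gives the dimension of the preimage. I then subtract this from \eqref{eq: dim BN correspondence new}, namely $\dim\nak_Q(\dimvec;\framvec)-\sum_v k_v(k_v-k^0_v)$, after rewriting $\dim\nak_Q(\dimvec-\uk;\framvec)$ via the first step and substituting $k^0_v=\widehat{k}^0_v+(\Cartan\uk)_v$. The terms involving $\Cartan\uk$ should cancel: $2\uk^T\uk^0-\uk^T\Cartan\uk$ becomes $2\uk^T\widehat{\uk}^0+\uk^T\Cartan\uk$, and the $\uk^T\Cartan\uk$ contributions must cancel against the corresponding term from $\sum_v k_vk^0_v$. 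After this bookkeeping, which is the main computational obstacle and where a sign error would be easy to make, I expect the codimension to be
\[
\dim\pnak{Q}{\dimvec,\dimvec-\uk;\framvec}-\dim(p_{+})^{-1}\big(\BN^{=\ur}_Q\big)=\sum_v r_v(r_v-k_v-\widehat{k}^0_v).
\]
As a sanity check, for $\ur=0$ this is $0$, consistent with $p_+$ being a Grassmannian bundle over the open stratum.

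Finally, each term $r_v(r_v-k_v-\widehat{k}^0_v)$ is nonnegative, since $r_v\ge0$ and, by hypothesis, $r_v\ge k_v+\widehat{k}^0_v$. The sum vanishes exactly when, for each $v$, either $r_v=0$ or $r_v=k_v+\widehat{k}^0_v$. Combined with $r_v\ge\max(k_v+\widehat{k}^0_v,0)$, this means $r_v=\max(k_v+\widehat{k}^0_v,0)$. Indeed, if $k_v+\widehat{k}^0_v\le0$ then $r_v=k_v+\widehat{k}^0_v$ forces $r_v\le 0$, hence $r_v=0$; and if $k_v+\widehat{k}^0_v>0$ then $r_v=0$ is excluded by the hypothesis. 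Therefore the inequality is strict unless $\ur=\max(\uk+\widehat{\uk}^0,0)$, which is the statement.
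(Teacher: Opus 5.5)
Your proposal is correct and follows the paper's proof essentially step for step. It uses the same expansion of $\dim\nak_Q(\dimvec-\uk;\framvec)$, the identity $\uk^0[\dimvec-\uk]=\uk^0[\dimvec]-\Cartan\uk$, and the stratum dimension from Theorem \ref{thm: BN projection 2} plus the Grassmannian fiber dimension from Lemma \ref{lem: second projection}, arriving at the codimension $\sum_v r_v\left(r_v-k_v-k^0_v[\dimvec-\uk]\right)$; the bookkeeping you only anticipated does close up exactly as you predict. Your explicit case analysis, showing that this sum vanishes only when $r_v=\max\left(k_v+k^0_v[\dimvec-\uk],0\right)$, spells out the equality case that the paper leaves implicit.
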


\begin{proof}
Observe that $\ur\ge \uk+\uk^0[\dimvec-\uk]$ implies 
$\ur\ge \uk^0[\dimvec-\uk]$, so Theorem \ref{thm: BN projection 2} applies to $\BN^{=\ur}_Q(\dimvec-\uk;\framvec)$. Now we need to compute the dimensions of all spaces in question.
Recall that 
$
\dim \nak_Q(\dimvec;\framvec)=2\framvec^T\dimvec-\dimvec^T\Cartan\dimvec$, so 
$$
\dim \nak_Q(\dimvec-\uk;\framvec)=2\framvec^T(\dimvec-\uk)-(\dimvec^T-\uk^T)\Cartan(\dimvec-\uk)=
$$
$$
\dim \nak_Q(\dimvec;\framvec)-2\framvec^T\uk-\uk^T\Cartan\uk+2\dimvec^T\Cartan\uk=\dim \nak_Q(\dimvec;\framvec)+2\uk^T\uk^0[\dimvec]-\uk^T\Cartan\uk.
$$
Note that 
$$
\uk^0[\dimvec-\uk]=\Cartan(\dimvec-\uk)-\framvec=\uk^0[\dimvec]-\Cartan\uk.
$$
Therefore
$$
\dim \BN^{=\ur}_Q(\dimvec-\uk;\framvec)=\dim \nak_Q(\dimvec-\uk;\framvec)-\sum_{v}r_v(r_v-\uk^0[\dimvec-\uk])=
$$
$$
\dim \nak_Q(\dimvec;\framvec)+2\uk^T\uk^0[\dimvec]-\uk^T\Cartan\uk-\ur^T(\ur-\uk^0[\dimvec]+\Cartan\uk),
$$
and finally
$$
\dim \left(p_{+}\right)^{-1}\left(\BN^{=\ur}_Q(\dimvec-\uk;\framvec)\right)=
\dim \nak_Q(\dimvec;\framvec)+2\uk^T\uk^0[\dimvec]-\uk^T\Cartan\uk-\ur^T(\ur-\uk^0[\dimvec]+\Cartan\uk)+\uk^T(\ur-\uk-\uk^0[\dimvec]+\Cartan\uk)=
$$
$$
\dim \nak_Q(\dimvec;\framvec)-\uk^T(\uk-\uk^0[\dimvec])-\ur^T(\ur-\uk-\uk^0[\dimvec]+\Cartan\uk)=
$$
$$
\dim \pnak{Q}{\dimvec,\dimvec-\uk;\framvec}-\sum_{v}r_v(r_v-k_v-k^0_v[\dimvec-\uk]).
$$
By our assumption $r_v-k_v-k^0_v[\dimvec-\uk]\ge 0$ and the result follows. 
\end{proof}

\begin{theorem}
\label{thm: irreducible}
Assume $\pnak{Q}{\dimvec,\dimvec-\uk;\framvec}$ is nonempty. Then both $\pnak{Q}{\dimvec,\dimvec-\uk;\framvec}$ and $\BN^{\uk}_Q(\dimvec;\framvec)$ are irreducible.
\end{theorem}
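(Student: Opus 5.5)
The plan is to induct on $|\dimvec|=\sum_v d_v$ and to use the second projection $p_{+}$ to exhibit a dense open subset of $\pnak{Q}{\dimvec,\dimvec-\uk;\framvec}$ that is a Grassmannian bundle over an irreducible base. Irreducibility of $\BN^{\uk}_Q(\dimvec;\framvec)$ then follows at once: by Lemma \ref{lem: BN projection 1} it is the image $p_{-}\left(\pnak{Q}{\dimvec,\dimvec-\uk;\framvec}\right)$, and the image of an irreducible variety is irreducible. So everything reduces to irreducibility of $\pnak{Q}{\dimvec,\dimvec-\uk;\framvec}$.

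\emph{Base case.} If $\uk=0$, then $\pnak{Q}{\dimvec,\dimvec;\framvec}\cong\nak_Q(\dimvec;\framvec)$. This is connected by the known result of Crawley-Boevey \cite{crawley2001geometry} and smooth by \cite{ginzburg}, hence irreducible.

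\emph{Inductive step.} Assume $\uk\neq 0$, and set $\ur^0:=\max(\uk+\uk^0[\dimvec-\uk],0)$.
\begin{enumerate}
\item The variety $\pnak{Q}{\dimvec,\dimvec-\uk;\framvec}$ is smooth. By Theorem \ref{thm: smooth}, its tangent space at every point has the same dimension \eqref{eq: dim BN correspondence new}, so every irreducible component has that dimension.
\item By Lemma \ref{lem: second projection}, it decomposes as the disjoint union of the preimages $(p_+)^{-1}\left(\BN^{=\ur}_Q(\dimvec-\uk;\framvec)\right)$ over $\ur\ge \uk+\uk^0[\dimvec-\uk]$.
\item By Lemma \ref{lem: second projection dim preimage}, every such piece with $\ur\neq\ur^0$ has dimension strictly smaller than that of $\pnak{Q}{\dimvec,\dimvec-\uk;\framvec}$. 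Since the space is nonempty and pure-dimensional, the stratum $\BN^{=\ur^0}_Q(\dimvec-\uk;\framvec)$ must be nonempty.
\item Its preimage $U$ is therefore open (the $\BN^{=\ur}$ are locally closed and $\BN^{=\ur^0}$ is open in the closed image) and dense, because it meets every irreducible component.
\end{enumerate}

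\emph{Irreducibility of $U$.} Over $\BN^{=\ur^0}_Q(\dimvec-\uk;\framvec)$, the maps $\widehat M_1$ and $\widehat M_2$ of \eqref{eq: M1 M2 extension} have constant rank. Hence $\Imm\widehat M_1\subset\ker\widehat M_2$ are vector bundles, and the proof of Lemma \ref{lem: second projection} identifies $U$ with the relative Grassmannian of intermediate subspaces $\mathscr{K}_v$. This is a Zariski locally trivial Grassmannian bundle, with fiber $\prod_v\Gr(k_v,r^0_v-k^0_v[\dimvec-\uk])$. To see that the base is irreducible, note that $\ur^0\ge\max(\uk^0[\dimvec-\uk],0)$. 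Theorem \ref{thm: BN}(a) and Theorem \ref{thm: BN projection 2}(b) then show that $\BN^{=\ur^0}_Q(\dimvec-\uk;\framvec)$ is open and dense in $\BN^{\ur^0}_Q(\dimvec-\uk;\framvec)$. The latter equals $p_{-}\left(\pnak{Q}{\dimvec-\uk,\dimvec-\uk-\ur^0;\framvec}\right)$, which is irreducible by the induction hypothesis, since $|\dimvec-\uk|<|\dimvec|$. So $U$ is a Grassmannian bundle over an irreducible base, hence irreducible, and $\pnak{Q}{\dimvec,\dimvec-\uk;\framvec}=\overline U$ is irreducible.

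\emph{Expected main obstacle.} The delicate point is step (4), the density of $U$. One cannot argue only by comparing dimensions of strata; one needs the purity of dimension coming from smoothness, together with the strict inequality in Lemma \ref{lem: second projection dim preimage}. A secondary check is that the stratum $\ur^0$ satisfies the hypothesis $\ur^0\ge\max(\uk^0[\dimvec-\uk],0)$ needed to apply Theorem \ref{thm: BN} to the smaller quiver variety, so that the induction applies.
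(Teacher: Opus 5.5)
Your proposal is correct and follows the paper's proof. The common ingredients are induction on $|\dimvec|$, the stratification of $\pnak{Q}{\dimvec,\dimvec-\uk;\framvec}$ by $p_{+}$-preimages of $\BN^{=\ur}_Q(\dimvec-\uk;\framvec)$, Lemma \ref{lem: second projection dim preimage} to isolate the stratum $\ur^0=\max(\uk+\uk^0[\dimvec-\uk],0)$, and the induction hypothesis applied to $\BN^{\ur^0}_Q(\dimvec-\uk;\framvec)$. Beyond that, you spell out what the paper leaves implicit: purity of dimension from smoothness, nonemptiness and density of the generic stratum, and the Grassmannian-bundle structure over it. You also obtain irreducibility of $\BN^{\uk}_Q(\dimvec;\framvec)$ directly as the image of $p_{-}$, where the paper invokes Theorem \ref{thm: BN}.
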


\begin{proof}
We proceed by induction on $|\dimvec|$. If $|\dimvec| = 0$ then necessarily $|\uk| = 0$ and $\pnak{Q}{\dimvec, \dimvec - \uk; \framvec} = \nak_Q(\dimvec; \framvec)$ is a point, so the base of induction follows. For the inductive step, assume we have $\dimvec \in \Z_{\geq 0}^{Q_0}$ and $\uk \leq \dimvec$ entry-wise. If $\uk=0$ then $\pnak{Q}{\dimvec,\dimvec-\uk;\framvec}$ is the Nakajima quiver variety $\nak_{Q}(\dimvec;\framvec)$ which is irreducible by a deep result of Crawley-Boevey \cite{crawley2001geometry}.

Assume now $\uk\neq 0$. By Lemma \ref{lem: second projection} we can write
$$
\pnak{Q}{\dimvec,\dimvec-\uk;\framvec}=\bigsqcup_{\ur\ge \uk+\uk^0[\dimvec-\uk]}\left(p_{+}\right)^{-1}\left(\BN^{=r}_Q(\dimvec-\uk;\framvec)\right).
$$
By Lemma \ref{lem: second projection dim preimage} the preimages of all strata with $\ur\neq \max(\uk+\uk^0[\dimvec-\uk],0)$ have positive codimension in $\pnak{Q}{\dimvec,\dimvec-\uk;\framvec}$. Therefore all irreducible components of $\pnak{Q}{\dimvec,\dimvec-\uk;\framvec}$ correspond to the irreducible components of $\BN^{=\max\left(\uk+\uk^0[\dimvec-\uk],0\right)}_Q(\dimvec-\uk;\framvec)$ or, equivalently (by Theorem \ref{thm: BN projection 2}) to  the irreducible components of $\BN^{\max\left(\uk+\uk^0[\dimvec-\uk],0\right)}_Q(\dimvec-\uk;\framvec)$. By the assumption of induction, the latter space is irreducible, so  $\pnak{Q}{\dimvec,\dimvec-\uk;\framvec}$  is irreducible. Now by Theorem \ref{thm: BN}  $\BN^{\uk}_Q(\dimvec;\framvec)$ is irreducible as well.
\end{proof}

\begin{remark}
This inductive argument is inspired by \cite[Theorem 3.3]{Bayer}.
\end{remark}

The proof of Theorem \ref{thm: irreducible} also implies the following combinatorial algorithm for determining whether $\pnak{Q}{\dimvec,\dimvec-\uk;\framvec}$ is nonempty.

\begin{corollary}
\label{cor: nonempty}
Suppose $\uk\neq 0$. The space $\pnak{Q}{\dimvec,\dimvec-\uk;\framvec}$ is nonempty if and only if $$\pnak{Q}{\dimvec-\uk,\dimvec-\uk-\max\left(\uk+\uk^0[\dimvec-\uk],0\right);\framvec}$$ is nonempty.
\end{corollary}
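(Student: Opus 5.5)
The claim follows from Lemma \ref{lem: second projection}, which describes the image of $p_{+}$, together with the equivalence in Lemma \ref{lem: BN projection 1}(a) between nonemptiness of Brill--Noether loci and of split parabolic quiver varieties. Write $\widehat{\dimvec}:=\dimvec-\uk$ and $\ur:=\max\left(\uk+\uk^0[\widehat{\dimvec}],0\right)$.

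First, Lemma \ref{lem: second projection} says that the image of $p_{+}:\pnak{Q}{\dimvec,\widehat{\dimvec};\framvec}\to\nak_Q(\widehat{\dimvec};\framvec)$ is exactly $\BN^{\ur}_Q(\widehat{\dimvec};\framvec)$. Hence $\pnak{Q}{\dimvec,\widehat{\dimvec};\framvec}$ is nonempty if and only if $\BN^{\ur}_Q(\widehat{\dimvec};\framvec)$ is nonempty. Here I would check one detail. Over every point of $\BN^{\ur}_Q(\widehat{\dimvec};\framvec)$ the fiber is the product of Grassmannians $\prod_v\Gr(k_v,r'_v-k^0_v[\widehat{\dimvec}])$, where $\ur'\ge\ur$ is the actual value of $\dim\Hom(\C_v,-)$ at that point. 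This product is nonempty precisely because $r'_v\ge k_v+k^0_v[\widehat{\dimvec}]$. So "image equals the Brill--Noether locus" does hold on the nose, including at points where $\ur'$ is strictly larger than $\ur$.

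Second, I apply Lemma \ref{lem: BN projection 1}(a) with $\widehat{\dimvec}$ in place of $\dimvec$ and $\ur$ in place of $\uk$. It gives that $\BN^{\ur}_Q(\widehat{\dimvec};\framvec)$ is nonempty if and only if $\pnak{Q}{\widehat{\dimvec},\widehat{\dimvec}-\ur;\framvec}$ is nonempty. This lemma needs $r_v\le\widehat d_v$ for the target dimension vector to make sense. If $r_v>\widehat d_v$ for some $v$, then the locus $\BN^{\ur}$ is empty, since $\dim\Hom(\C_v,D)\le\widehat d_v$. In that case the correspondence $\pnak{Q}{\widehat{\dimvec},\widehat{\dimvec}-\ur;\framvec}$ has a negative dimension vector, so it is empty by convention, and the equivalence still holds. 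Chaining the two equivalences proves the corollary.

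The main point needing care is the first step: confirming that the image is the full closed locus and not just the open stratum. The fiber computation in Lemma \ref{lem: second projection} already handles this stratum by stratum. The hypothesis $\uk\neq0$ only guarantees that $|\widehat{\dimvec}|<|\dimvec|$, so iterating the corollary gives a terminating algorithm. It is not needed for the equivalence itself.
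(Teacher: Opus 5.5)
Your proposal is correct and follows the paper's own route. The paper gives no separate proof: it says the corollary follows from the proof of Theorem \ref{thm: irreducible}. That argument uses exactly your two inputs. First, Lemma \ref{lem: second projection} identifies the image of $p_{+}$ with $\BN^{\max(\uk+\uk^0[\dimvec-\uk],0)}_Q(\dimvec-\uk;\framvec)$. Second, Lemma \ref{lem: BN projection 1}(a) says that this locus is nonempty exactly when the next split parabolic quiver variety is nonempty. Your side remarks are also accurate: the fiber check over the deeper strata, the convention when $r_v$ exceeds $d_v-k_v$, and the observation that $\uk\neq 0$ only serves to make the recursion terminate.
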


By applying Corollary \ref{cor: nonempty} inductively, we can decrease $\dimvec$ and eventually stop at $\uk=0$. In this case  $\pnak{Q}{\dimvec',\dimvec';\framvec}=\nak_Q(\dimvec';\framvec)$ and it is nonempty if and only if $\dimvec'$ is a root for $Q\cup\{\infty\}$ in the sense of \cite{crawley2001geometry}.

\subsection{More general projections}

In this section we describe the fibers of more general maps $P_+$ and $P_-$ from Lemma \eqref{lem: maps}. Recall the setup of Lemmas \ref{lem: k bound} and  \ref{lem: second projection} and consider the diagram
$$
\begin{tikzcd}
D_w \arrow[bend left]{rr}{M_{1,w}} \arrow[twoheadrightarrow,swap]{dd}{\pi_w} & & \Xi_w \arrow[twoheadrightarrow]{dd} \arrow {ll}{M_{2,w}}\\
 & & \\
\widehat{D}_w \arrow{rr}{\widehat{M}_{1,w}} \arrow[dotted,pos=0.3]{uurr}{\widetilde{M}_{1,w}}& & \widehat{\Xi}_w \arrow[bend left]{ll}{\widehat{M}_{2,w}} \arrow[dotted,pos=0.3,swap]{uull}{\widetilde{M}_{2,w}}
\end{tikzcd}
$$
We denote the dotted diagonal arrows by $\widetilde{M}_{1,w}$ and $\widetilde{M}_{2,w}$, so that 
$$
M_{1,w}=\widetilde{M}_{1,w}\pi_w,\ \widehat{M}_{2,w}=\pi_w\widetilde{M}_{2,w}.
$$

\begin{lemma}
\label{lem: fibers partial flags}
a) The fiber of $P_{-}$ is isomorphic to the product 
$$
\prod_{w}\Gr\left(d_w^{(\ell-1)}-d_{w}^{(\ell)},\ker M_{1,w}/K_w\right).
$$

b)  The fiber of $P_{+}$ is is isomorphic to the product 
$$
\prod_w\Gr\left(\dim \left[\ker \widetilde{M}_2/\Imm \widehat{M}_1\right]-(d^{(0)}_w-d^{(1)}_w),\ker \widetilde{M}_2/\Imm \widehat{M}_1\right).
$$
\end{lemma}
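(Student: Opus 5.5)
The plan is to reduce both statements to the $\ell=1$ fiber computations of Lemma \ref{lem: BN projection 1} and Lemma \ref{lem: second projection}, using the interpretation of $\pnak{Q}{\dimv{0},\dots,\dimv{\ell};\framvec}$ as a quiver variety point $D^{(0)}$ together with a chain of trivial subrepresentations $0=K_{(0)}\subseteq K_{(1)}\subseteq\cdots\subseteq K_{(\ell)}\subseteq D^{(0)}$. This description comes from Proposition \ref{prop: isomorphism} and the remark following it. By Lemma \ref{lem:stable-quotient}, stability of all quotients is automatic, so the fibers are governed purely by linear algebra.

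For (a), I fix a point of $\pnak{Q}{\dimv{0},\dots,\dimv{\ell-1};\framvec}$. This is $D=D^{(0)}$ together with the chain up to $K_{(\ell-1)}=:K$. A lift to $\pnak{Q}{\dimv{0},\dots,\dimv{\ell};\framvec}$ amounts to choosing $K_{(\ell),w}\supseteq K_w$ with $\dim K_{(\ell),w}/K_w=d^{(\ell-1)}_w-d^{(\ell)}_w$. Condition (3) of Definition \ref{def:prequotient} then requires $K_{(\ell)}\oplus 0$ to be a trivial subrepresentation. By Lemma \ref{lem:easy hom}(a), together with the identification of $\Hom(\C_w,D)$ with $\ker M_{1,w}$ from Lemma \ref{lem: k bound}, this says exactly $K_{(\ell),w}\subseteq\ker M_{1,w}$. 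Since $K_w\subseteq\ker M_{1,w}$ already holds, these choices are the subspaces of $\ker M_{1,w}/K_w$ of the given dimension. The remaining data $D^{(\ell)}=D/K_{(\ell)}$ with the induced maps is then determined. Injectivity of the correspondence follows from Lemma \ref{lem: quiver-variety-embedding}, which says the maps $\pi$ are determined by the representations. This gives the product of Grassmannians, exactly as in Lemma \ref{lem: BN projection 1}.

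For (b), I fix a point of $\pnak{Q}{\dimv{1},\dots,\dimv{\ell};\framvec}$, so $\widehat D:=D^{(1)}$ is given with its chain. A lift consists of $D=D^{(0)}$ with a surjection $\pi:D\to\widehat D$ whose kernel is trivial of dimension $d^{(0)}-d^{(1)}$, subject to the condition that the longer chain in $D$ still consists of trivial subrepresentations. For this I repeat the argument of Lemma \ref{lem: second projection}. Lifts $D$ of $\widehat D$ with trivial kernel correspond to subspaces $\mathscr K_w$ with $\Imm\widehat M_{1,w}\subseteq\mathscr K_w\subseteq\ker\widehat M_{2,w}$ and codimension $d^{(0)}_w-d^{(1)}_w$ in $\ker\widehat M_{2,w}$. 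Here $D_w=\widehat\Xi_w/\mathscr K_w$, and in the notation of the statement $\ker\widetilde M_2$ is identified with $\ker\widehat M_2$, the space containing the possible $\mathscr K$. Passing to the quotient by $\Imm\widehat M_1$, the choices are subspaces of dimension $\dim[\ker\widetilde M_2/\Imm\widehat M_1]-(d^{(0)}_w-d^{(1)}_w)$ in $\ker\widetilde M_2/\Imm\widehat M_1$, which is the asserted Grassmannian.

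The main obstacle is checking in (b) that the deeper kernels $\pi^{-1}(K_{(m)})$ (the chain of $\widehat D$ pulled back to $D$) impose no further conditions. For this I would argue that a vector of $D_w$ mapping into $K_{(m),w}\subseteq\ker\widehat M_{1,w}$ is killed by $A,B,j$. These maps on $D$ factor as $\widetilde M_{1,w}\pi_w$, and $\widetilde M_{1,w}$ composed with the projection $\Xi_w\to\widehat\Xi_w$ recovers $\widehat M_{1,w}$. However, $\Xi_w$ involves the $D$'s rather than the $\widehat D$'s, so the vanishing must be checked in $D$ itself. Condition (3) in Definition \ref{def:prequotient} only involves the total kernel $K_{(\ell)}$, and this triviality must be imposed. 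I expect it to follow from the structure of the lift in Lemma \ref{lem: second projection}, where every arrow out of $D_w$ factors through $\widehat D_w$ and then through $\widehat\Xi\twoheadrightarrow D$. Concretely, $A_e(x)$ is the image of $\widehat A_e\pi(x)$ under $\widehat\Xi\to D$, and for $\pi(x)\in K_{(\ell)}$ we have $\widehat A_e\pi(x)=0$ already in $\widehat\Xi$; the same holds for $B$ and $j$. Finally, I would verify that these fiber identifications vary algebraically, by phrasing them via the tautological bundles $\CK_{v,(i)}$.
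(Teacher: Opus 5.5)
Part (a) is correct and is essentially the paper's argument. Part (b) has a genuine gap, exactly at the step you flagged as the main obstacle. You build $\widehat\Xi_w$ from $D^{(1)}$ and let $\mathscr K_w$ range over $\Imm\widehat M_{1,w}\subseteq\mathscr K_w\subseteq\ker\widehat M_{2,w}$. This parametrizes lifts in which only $\ker(D^{(0)}\to D^{(1)})$ is trivial; by Lemma \ref{lem: second projection} it is the fiber of $p_+\colon\pnak{Q}{\dimv{0},\dimv{1};\framvec}\to\nak_Q(\dimv{1};\framvec)$ over $D^{(1)}$. For $\ell\ge 2$, condition (3) of Definition \ref{def:prequotient} requires the larger kernel $\ker(D^{(0)}\to D^{(\ell)})$ to be trivial, and this is not automatic.

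Your justification mixes up two vectors. In the lift, $A_e(x)$ is the class modulo $\mathscr K_v$ of $\iota_e(\pi(x))$, i.e.\ of $\pi(x)$ placed in the $e$-summand of $\widehat\Xi_v$. It is not the class of $\widehat A_e\pi(x)$, which lies in $D^{(1)}_v$, not in $\widehat\Xi_v$. If $\pi(x)\in\ker(D^{(1)}\to D^{(\ell)})$, you only get $\widehat M_{2,v}(\iota_e\pi(x))=\pm\widehat A_e\pi(x)=0$, i.e.\ $\iota_e\pi(x)\in\ker\widehat M_{2,v}$. But $A_e(x)=0$ requires $\iota_e\pi(x)\in\mathscr K_v$, and the same issue arises for $B$; only the $j$-component is automatic.

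Here is a concrete failure: the Jordan quiver, $f=1$, dimensions $(3,2,1)$, base point $I_1=(x,y)\supset I_2=(x^2,y)$. Your recipe yields $I_3=(x^3,xy,y^2)+\C(\alpha x^2+\beta y)$, a $\mathbb{P}^1$. But triviality of $I_1/I_3$ means $(x,y)I_1\subseteq I_3$, which forces $I_3=(x,y)^2$, a single point. This agrees with the stated formula, where $\Imm\widehat M_1=0$ and $\ker\widetilde M_2$ is one-dimensional.

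The missing condition is $\mathscr K_v\supseteq N_v:=\sum_e\iota_e\bigl(\ker(D^{(1)}\to D^{(\ell)})\bigr)$. Since $N_v\subseteq\ker\widehat M_{2,v}$, dividing by $N_v$ replaces your $\widehat\Xi_v$ by the analogous space built from $D^{(\ell)}$. It also replaces your $\ker\widehat M_{2,v}$ by $\ker\widetilde M_{2,v}$, where $\widetilde M_{2,v}$ maps that space to $D^{(1)}_v$. This is precisely the paper's setup: in the diagram preceding the lemma one takes $D=D^{(1)}$ and $\widehat D=D^{(\ell)}$. Thus $\ker\widetilde M_2=\ker\widehat M_2/N$ with your $\widehat M_2$, and your identification of the two is valid only when $N=0$, e.g.\ for $\ell=1$, where $P_+=p_+$.

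Put directly, as in the paper: all arrows out of $D^{(0)}$ must kill $\ker(D^{(0)}\to D^{(\ell)})$. So by stability $D^{(0)}_v=\widehat\Xi_v/\mathscr K_v$ with $\widehat\Xi_v$ built from $D^{(\ell)}$. The moment map gives $\Imm\widehat M_{1,v}\subseteq\mathscr K_v$, and compatibility with $\pi^{(0)}$ gives $\mathscr K_v\subseteq\ker\widetilde M_{2,v}$, of codimension $d^{(0)}_v-d^{(1)}_v$.
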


\begin{proof}

a) We have the map $\pi_w:D_w\twoheadrightarrow \widehat{D}_w$ with kernel $K_w=\ker \pi_w$ of dimension $d_w^{(0)}-d_w^{(\ell-1)}$ contained in $\ker M_{1,w}$. We would like to extend this to a projection
$\pi'_w:D_w\twoheadrightarrow \widehat{D}_w\twoheadrightarrow \widehat{D}'_w$ with kernel $K'_w=\ker \pi'_w$ such that $\dim K'_W=d_w^{(0)}-d_w^{(\ell)}$. As in Lemma \ref{lem: BN projection 1} we need $K_w\subset K'_w\subset \ker M_{1,w}$, so the choice of $K'_w$ is equivalent to the choice of a subspace $K_w'/K_w\subset \ker M_{1,w}/K_w$.

b) Here we instead want to extend $\pi_w$ to projection $\pi''_w: D''_w\twoheadrightarrow D_w\twoheadrightarrow \widehat{D}_w$. Similarly to the proof of Lemma \ref{lem: second projection}, this is equivalent to the choice of a subspace $\mathscr{K}_w$ such that 
$$
\Imm \widehat{M}_{1,w}\subset \mathscr{K}_w\subset \ker \widetilde{M}_{2,w}\subset \ker \widehat{M}_{2,w}
$$
and 
$$
D''_w=\widehat{\Xi}_w/\mathscr{K}_w\twoheadrightarrow D_w=\widehat{\Xi}_w/\ker \widetilde{M}_{2,w} \twoheadrightarrow \widehat{D}_w=\widehat{\Xi}_w/\ker \widehat{M}_{2,w}.
$$
Such $\mathscr{K}_w$ corresponds to a subspace $\mathscr{K}_w/\Imm \widehat{M}_1$ in $\ker \widetilde{M}_2/\Imm \widehat{M}_1$ of codimension
$$
\dim \ker \widetilde{M}_{2,w}-\dim \mathscr{K}_w=\dim D''_w-\dim D_w=d^{(0)}_w-d^{(1)}_w.
$$
\end{proof}

The next result will be important in the sequel \cite{BqtQ}. 
 
\begin{corollary}
\label{cor: fibers full flags}
Consider the maps \eqref{eq: def P plus minus}.

a) The fiber of $P_{-,v}$ is the projective space of lines in $\ker M_{1,v}/K_v$.

b)  The fiber of $P_{+,v}$ is the projective space of hyperplanes in $\ker \widetilde{M}_{2,v}/\Imm \widehat{M}_{1,v}$.
\end{corollary}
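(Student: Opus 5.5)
The plan is to specialize Lemma~\ref{lem: fibers partial flags} to the two maps of \eqref{eq: def P plus minus}. Through the identification $\parnak{\uk}{Q}{\dimvec;\framvec}\cong\pnak{Q}{\dimv{0},\dots,\dimv{\ell};\framvec}$ of Proposition~\ref{prop: isomorphism}, these maps are exactly $P_-$ and $P_+$ of Lemma~\ref{lem: maps}. The only freedom is the choice of the sequence $\dimv{m}$. For $P_{-,v}$ we choose it so that $\dimv{\ell-1}-\dimv{\ell}=\can_v$. For $P_{+,v}$ we choose it so that $\dimv{0}-\dimv{1}=\can_v$. By the remark following Proposition~\ref{prop: isomorphism}, either choice yields the same variety.

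For part (a) we apply Lemma~\ref{lem: fibers partial flags}(a). The fiber is $\prod_w\Gr\bigl(d^{(\ell-1)}_w-d^{(\ell)}_w,\ker M_{1,w}/K_w\bigr)$, where $K_w$ is the kernel of the projection to $D^{(\ell-1)}_w$. Since $d^{(\ell-1)}_w-d^{(\ell)}_w=\delta_{vw}$, every factor with $w\ne v$ is $\Gr(0,\cdot)$, which is a point. The factor at $v$ is $\Gr(1,\ker M_{1,v}/K_v)$, the projective space of lines. This matches the description given earlier: $P_{-,v}$ forgets the top subspace $K_{k_v,v}$, and a choice of $K_{k_v,v}\supset K_{k_v-1,v}$ inside $\ker M_{1,v}=\Hom(\C_v,D)$ (Lemma~\ref{lem:easy hom}) is exactly a line in the quotient. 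Here $K_v$ should be read as $K_{k_v-1,v}$.

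For part (b) we apply Lemma~\ref{lem: fibers partial flags}(b). The fiber is $\prod_w\Gr\bigl(\dim V_w-(d^{(0)}_w-d^{(1)}_w),V_w\bigr)$ with $V_w=\ker\widetilde M_{2,w}/\Imm\widehat M_{1,w}$. When $w\ne v$ the subspace must have full dimension, so we get $\mathscr K_w=\ker\widetilde M_{2,w}$ and the factor is a point. When $w=v$ the subspace has codimension one, so the factor is the space of hyperplanes in $V_v$.

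The main obstacle is bookkeeping rather than mathematics. One has to check that the spaces $\ker\widetilde M_{2,v}$ and $\Imm\widehat M_{1,v}$ in the statement refer to the correct level of the flag: the base point of $P_{+,v}$ is the quotient by $K_{1,v}$, and the top of the flag is being reconstructed. One also has to confirm that the Grassmannian descriptions glue over the base, but this is only needed pointwise for a fiber statement.
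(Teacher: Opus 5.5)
Your proposal is correct and follows the paper's own argument: the corollary is obtained by specializing Lemma~\ref{lem: fibers partial flags}. You choose the sequence so that its last step (for $P_{-,v}$) or first step (for $P_{+,v}$) is $\can_v$; then every factor with $w\neq v$ collapses to a point, and the factor at $v$ becomes the space of lines, respectively hyperplanes. Your bookkeeping is also right: $K_v$ is read as $K_{k_v-1,v}$, and $\widetilde M_{2,v}$, $\widehat M_{1,v}$ are attached to the base point $D^{(1)}$ and to $D^{(\ell)}$.
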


\subsection{Examples}
\begin{example}
Consider the $A_1$ quiver with dimension vector $d$ and framing $f$. The corresponding quiver variety $\nak_{Q}(d,f)$ is isomorphic to $T^*\Gr(f-d,f)$ with cotangent direction corresponding to the map $j: D\to \ker(i)$. Alternatively, we can denote $V_{f-d}=\ker(i)$ and $\alpha=ji\in \End(\C^f)$, then
$$
\alpha(\C^f)\subset V_{f-d},\ \alpha(V_{f-d})=0.
$$
The locus $\BN^k_Q(d;f)$ corresponds to the set of maps $j$ such that $\dim \ker j\ge k$ or equivalently $\rk j\le d-k$. For a fixed $D$ we get a determinantal variety of dimension $(d-k)(f-d+k)$, so overall 
$$
\dim \BN^k_Q(d;f)=d(f-d)+(d-k)(f-d+k)=(2d-k)f-2d(d-k)-k^2,
$$
in agreement with \eqref{eq: dim BN correspondence}. Note that in this case $k^0[d]=2d-f$ and $\rk j\le \min(d,f-d)$ so $k\ge d-\min(d,f-d)=\max(0,2d-f)=\max(0,k^0[d])$. For all such $k$ the locus $\BN^{=k}_Q(d;f)$ is nonempty.

A point in $\pnak{Q}{d,d-k;f}$ corresponds to the choice of a $k$-dimensional subspace in the kernel of $j$, which is indeed equivalent to a resolution of singularities of the determinantal variety. Alternatively, a point in $\pnak{Q}{d,d-k;f}$ corresponds to a flag
$
\left\{0\subset V_{f-d}\subset V_{f-d+k}\subset \C^f\right\}
$
and a map $\alpha\in \End(\C^f)$ such that
$$
\alpha(\C^f)\subset V_{f-d},\ \alpha(V_{f-d+k})=0.
$$
Finally, note that in this example
$$
k+k^0[d-k]=k+2(d-k)-f=2d-f-k\le 0,
$$
so the image of $p_+$ is $\BN^{0}_Q(d-k;f)=\nak_Q(d-k;f)$. The fiber of $p_+$ consists of subspaces $V_{f-d}$ such that $\Imm \alpha\subset V_{f-d}\subset V_{f-d+k}$, so indeed it is isomorphic to the Grassmannian of codimension $k$ hyperplanes in $V_{f-d+k}/\Imm \alpha$.
\end{example}

\begin{example}\label{ex: An single vertex}
    More generally, consider the equioriented $A_n$-quiver:
    \begin{center}
\begin{tikzcd}
\node at (0.1,0) {f};
\draw (-0.2, -0.2) -- (-0.2, 0.4) -- (0.4, 0.4) -- (0.4, -0.2) -- cycle;

\draw[->] (0.1, -0.3) -- (0.1, -0.8);

\node at (0.1, -1.3) {d_1};
\draw (0.1, -1.2) circle (0.3);
\draw[->] (0.5, -1.2) -- (1.3, -1.2);
\node at (1.7, -1.3) {d_2};
\draw (1.7, -1.2) circle (0.3);
\draw [->] (2.1, -1.2) -- (2.9, -1.2);
\node at (3.3, -1.2) {\dots};
\draw[->] (3.7, -1.2) -- (4.5, -1.2);
\node at (4.9, -1.3) {d_n};
\draw (4.9, -1.2) circle (0.3);
\end{tikzcd}
\end{center}
The quiver variety $\nak_Q(\dimvec; \framvec)$ is nonempty if and only if $f \geq d_1 \geq \cdots \geq d_n$, in which case it is isomorphic to $T^{\ast}\Fl(f-d_1, \dots, f-d_n; f)$, as follows. An element $(A, B, i, j)$ is stable if and only if $i_1, A_1, \dots, A_{n-1}$ are surjective. We then produce a partial flag $0 \subseteq \ker(i_1) \subseteq \ker(A_1i_1) \subseteq \cdots \subseteq \ker(A_{n-1}\cdots A_1i_1) \subseteq \C^f$. To produce a cotangent vector to this flag, we consider $\alpha = j_1i_1 \in \End(\C^f)$. It follows easily from the moment map equations that $\alpha(\C^f) \subseteq \ker(A_{n-1}\cdots A_1i_1)$, and that $\alpha(\ker(A_m\cdots A_1i_1)) \subseteq \ker(A_{m-1}\cdots A_1i_1)$, while $\alpha(\ker(i_1)) = 0$, so that $\alpha$ is indeed a cotangent vector to the flag $0 \subseteq \ker(i_1) \subseteq \cdots \subseteq \ker(A_{n-1}\cdots A_1i_1) \subseteq \C^f$. 

We have
\[
k^0[\dimvec] = ((d_1 - d_2) - (f - d_1), (d_2 - d_3) - (d_1 - d_2), \dots, 2d_n - d_{n-1}).
\]
Note that $\dim(\ker(A_v)) = d_v - d_{v+1}$, so if $\BN^{=k}_{Q}(\dimvec; f) \neq \emptyset$, we must have $\max(0, k^0_v[\dimvec]) \leq k_v \leq d_v - d_{v+1}$. For any such $\uk$ the locus $\BN^{=\uk}$ is nonempty. The variety $\pnak{Q}{\dimvec, \dimvec - \uk; \framvec}$ can be identified with the following subbundle of $T^{\ast}\Fl(f - d_1, f-d_1+k_1, \dots, f-d_n, f-d_n+k_n;f)$:
\begin{equation}\label{eq:sub-bundle}
\Set{(0 \subseteq V_{f-d_1} \subseteq V_{f-d_1+k_1} \subseteq \cdots \subseteq V_{f-d_{n}} \subseteq V_{f-d_n+k_n} \subseteq \C^f, \alpha) | \begin{array}{l} \alpha(\C^f) \subseteq V_{f-d_n} \\\alpha(V_{f-d_v+k_v}) \subseteq V_{f-d_{v-1}} \\ \alpha(V_{f-d_1+k_1}) = 0\end{array}} 
\end{equation}
so $\pnak{Q}{\dimvec, \dimvec - \uk; \framvec}$ is indeed smooth. The map $p_{-}: \pnak{Q}{\dimvec, \dimvec - \uk; \framvec} \to \nak_{Q}(\dimvec; \framvec)$ forgets the subspaces $V_{f-d_i + k_i}$, while the map $p_{+}: \pnak{Q}{\dimvec, \dimvec - \uk; \framvec} \to \nak_{Q}(\dimvec - \uk; \framvec)$ forgets the subspaces $V_{f-d_i}$. 
\end{example}

\begin{example}
For the equioriented $A_n$-quiver with an arbitrary choice of framing vector, it follows from the main result of \cite{maffei} that $\pnak{Q}{\dimvec, \dimvec-\uk; \framvec}$ can be identified with the intersection of a parabolic Slodowy variety with a subbundle of the cotangent bundle to a partial flag variety defined similarly to \eqref{eq:sub-bundle}. More precisely, let $N := f_1 + 2f_2 + \cdots + nf_{n}$, and let $\mathbf{e} \in \mathfrak{sl}(N)$ be a nilpotent element of type $1^{f_1}2^{f_2}\cdots n^{f_{n}}$. Pick a semisimple element $\mathbf{h}$ and a nilpotent element $\mathbf{f}$ in $\mathfrak{sl}(N)$ such that $(\mathbf{e}, \mathbf{h}, \mathbf{f})$ form an $\mathfrak{sl}(2)$-triple. The Slodowy slice is the affine space $\mathcal{S}_{\mathbf{e}}:= \mathbf{e} + \ker(\mathrm{ad}(\mathbf{f})) \subseteq \mathfrak{sl}(N)
$. By convention, if $\mathbf{e} = 0$ then $\mathbf{h} = \mathbf{f} = 0$. 

Now, given the framing vector $\framvec$ and a dimension vector $\dimvec$ define the integers
\begin{equation}\label{eq:slodowy-data}
a_1(\dimvec) = f_1 + \cdots + f_{n} - d_1, \; a_i(\dimvec) = f_i + \dots + f_n + d_{i-1} - d_{i}  (i \leq 2 \leq n), \; a_{n+1}(\dimvec) = d_n.
\end{equation}
Note that $\sum_{i = 1}^{n+1} a_i(\dimvec) = N$. We denote the partial sums by $s_j = \sum_{i = 1}^{j} a_i$, $i = 1, \dots, n+1$. Then, by \cite[Theorem 1.2]{maffei}, $\nak_Q(\dimvec; \framvec)$ is nonempty if and only if $a_i(\dimvec) \geq 0$ for every $i = 1, \dots, n+1$, and in this case we have an isomorphism $\nak_Q(\dimvec; \framvec) \cong \widetilde{\mathcal{S}}_{\mathbf{e}}$, where $\widetilde{\mathcal{S}}_{\mathbf{e}}$ is the preimage of the Slodowy slice $\mathcal{S}_{\mathbf{e}}$ under the Springer map $\varphi: T^{\ast}\Fl(s_1, \dots, s_n; N) \to \mathfrak{sl}(N)$. 
Given the vector $\uk$, let $a'_1, \dots, a'_{n+1}$ be the sequence defined by \eqref{eq:slodowy-data} for the vector $\dimvec - \uk$, and let $s'_i$ be the corresponding partial sums. Assume that both $\nak_Q(\dimvec; \framvec)$ and $\nak_Q(\dimvec - \uk; \framvec)$ are nonempty. Then, $\nak_Q(\dimvec, \dimvec - \uk; \framvec)$ is nonempty if and only if we have
\[
s_1 \leq s'_1 \leq s_2 \leq s'_2 \leq \cdots \leq s_n \le s'_n.
\]
To describe $\nak_Q(\dimvec, \dimvec - \uk; \framvec)$ we consider the cotangent bundle $T^{\ast}\Fl(s_1, s'_1, s_2, s'_2, \dots, s_n, s'_n; N)$, and the subbundle
\begin{equation}\label{eq:Slodowy-subbundle}
\Set{(0 \subseteq V_{s_1} \subseteq V_{s'_1} \subseteq \cdots \subseteq V_{s_n} \subseteq V_{s'_n} \subseteq \C^N, \alpha) | \begin{array}{l} \alpha(\C^N) \subseteq V_{s_n} \\\alpha(V_{s'_i}) \subseteq V_{s_{i-1}} \\ \alpha(V_{s'_1}) = 0\end{array}},
\end{equation}
so that $\pnak{Q}{\dimvec, \dimvec - \uk; \framvec}$ is the intersection of \eqref{eq:Slodowy-subbundle} with the preimage of the Slodowy slice $\mathcal{S}_{\mathbf{e}}$ under the Springer map $T^{\ast}\Fl(s_1, s'_1, \dots, s_n, s'_n; N) \to \mathfrak{sl}(N)$. Note that when $\framvec$ is concentrated at the vertex $1$ we obtain $\mathbf{e} = 0$, so the Slodowy slice is $\mathcal{S}_{\mathbf{e}} = \mathfrak{sl}(N)$ and we recover Example \ref{ex: An single vertex}. It would be interesting to see if the embedding of \cite[Theorem 1.1]{dinkins} extends to the case of split parabolic quiver varieties.
\end{example}

\begin{example}\label{ex: hilbert-scheme}
When $Q$ is the Jordan quiver with $f=1$ the Brill-Noether loci were considered in \cite{Bayer,NakajimaYoshioka1,NakajimaYoshioka2}. 

Let $\C=\C[x,y]/(x,y)$ denote the one-dimensional module over $\C[x,y]$ supported at the origin.  Recall that $\nak_Q(d,1)$ is the Hilbert scheme of $d$ points on $\C^2$. Then
$$
\BN^{k}_Q(d,1)=\Set{I\in \Hilb^n(\C^2)|\dim \Hom_{\C[x,y]}\left(\C,\C[x,y]/I\right)\ge k}=
$$
$$
\Set{I\in \Hilb^n(\C^2)|\dim \C\otimes_{\C[x,y]}I\ge k+1}.
$$

The main result of \cite{Bayer} states that $\BN^{k}_Q(d,1)$ is irreducible and Cohen-Macaulay of dimension $2d-k(k+1)$, and non-empty whenever $2d-k(k+1)\ge 0$. Note that in this case the Cartan matrix $\Cartan$ vanishes, so $k^0[d]=-1$ and \eqref{eq: dim BN correspondence new} also yields
$$
\dim \BN^{k}_Q(d,1)=\dim \Hilb^d(\C^2)-k(k+1).
$$

The correspondence $\pnak{Q}{d,d-k; 1}$ was denoted by $\Hilb^{\dagger}_{d-k,d}(\C^2)$ in \cite[Definition 3.1]{Bayer} and by $\widehat{M}^0(c)$ in \cite{NakajimaYoshioka2}. It was proved in \cite[Remark 3.4]{Bayer} that it is irreducible, while \cite[Corollary 3.7]{NakajimaYoshioka2} proves that it is smooth. 
\end{example}

\begin{remark}\label{rmk: split-vs-philb}
    We note that for the Jordan quiver with framing $f = 1$, the split parabolic quiver variety $\parnak{k}{Q}{d; 1}$ differs from the parabolic flag Hilbert scheme $\mathrm{PFH}_{d, d-k}(\mathbb{C}^2)$ defined by the second author together with Carlsson and Mellit in \cite{CGM}.  Indeed, we have
    \[
    \parnak{k}{Q}{d; 1} = \{I_{d-k} \supseteq \cdots \supseteq I_{d} \supseteq (x,y)I_{d-k}\}, \qquad \text{but} \quad \mathrm{PFH}_{d, d-k} = \{I_{d-k} \supseteq \cdots \supseteq I_{d} \supseteq yI_{d-k}\}.
    \]
    Note that by \cite[Theorem 4.1.6]{CGM}, the space $\mathrm{PFH}_{d, d-k}$ is smooth of dimension $\dim\mathrm{PFH}_{d, d-k} = 2d - k > \dim \parnak{k}{Q}{d;1}$. Note that $\parnak{k}{Q}{d; 1} \subseteq \mathrm{PFH}_{d, d-k}$ and that, while the former space is isotropic in $\Hilb^{d}(\C^2) \times \Hilb^{d-k}(\C^2)$, the latter is Lagrangian. 
\end{remark}

\begin{example}\label{ex: gieseker}
More generally, when $Q$ is the Jordan quiver with framing $f$ the quiver variety $\nak_Q(d;f)$ is isomorphic to the Gieseker moduli space $\mathcal{M}(f,d)$ of framed rank $f$ sheaves on $\mathbb{P}^2$ with second Chern class $d$. 
Similarly, 
$$
\BN^{k}_Q(d,f)=\Set{\mathcal{F}\in \mathcal{M}(f,d)|\dim \Hom_{\C[x,y]}\left(\C,\C[x,y]^{\oplus f}/\mathcal{F}\right)\ge k}
$$
and by Theorem \ref{thm: BN projection 2} we get
$$
\dim \BN^{k}_Q(d,f)=2df-k(k+f).
$$
Here $k^0[d]=-f$. Similarly to Remark \ref{rmk: split-vs-philb}, the variety $\parnak{k}{Q}{d; f}$ differs from the parabolic Gieseker moduli space considered in \cite{GGS1}. 
\end{example}

\begin{example}
Example \ref{ex: hilbert-scheme} admits the following generalization. Let $\Gamma = \Z/r\Z$ be a cyclic group, with generator $\gamma$ acting on $\C^2$ by $\gamma.(x,y) = (\zeta x, \zeta^{-1} y)$, where $\zeta$ is a primitive $r$-root of unity. This action lifts to the Hilbert scheme $\Hilb^{\ast}(\C^2)$, and every component of the fixed point set $\Hilb^{\ast}(\C^2)^{\Gamma}$ is smooth.
Let $\widetilde{A}_r$ be the cyclic quiver with $r$ vertices labeled $(0, \dots, r-1)$, and consider the framing vector $\framvec = (1, 0, \dots, 0)$. Let $\dimvec$ be a dimension vector, and $n = d_0 + \dots + d_{r-1}$. Then, $\nak_{Q}(\dimvec; \framvec)$ can be identified with the component of $\Hilb^{n}(\C^2)^{\Gamma}$ consisting of $\Gamma$-invariant ideals $I \subseteq \C[x,y]$ such that, as $\Gamma$-modules,
\[
\C[x,y]/I \cong \bigoplus_{i = 0}^{r-1} \C_i^{d_i},
\]
where $\C_i$ is the $1$-dimensional representation of $\Gamma$ where $\gamma$ acts by $\zeta^{i}$, see e.g. \cite[Sect. 2.2]{negut-thesis}.

Let $\uk = (k_0, \dots, k_{r-1})$. To describe the Brill-Noether locus $\BN^{\uk}_{Q}(\dimvec; \framvec)$, consider the semidirect product algebra $\C[x,y]\# \Gamma$, and let $\C_i$ denote the $1$-dimensional representation of $\C[x,y]\#\Gamma$ where $x, y$ act by zero and $\gamma$ acts by $\zeta^{i}$. Then,
\[
\BN^{\uk}_{Q}(\dimvec; \framvec) = \Set{I \in \nak_{Q}(\dimvec; \framvec) | \dim \Hom_{\C[x,y]\#\Gamma}(\C_i, \C[x,y]/I) \geq k_i \; \forall i = 0, \dots, r-1}.
\]

This admits two generalizations. First, Example \ref{ex: gieseker} admits a similar generalization, cf. \cite[Section 2.2]{negut-thesis}. Second, one can replace $\Gamma$ with any finite subgroup of $\mathrm{SL}_2(\C)$,  in this case the quiver $Q$ is an orientation of the affine Dynkin diagram associated to $\Gamma$ via the McKay correspondence, see \cite{kuznetsov}, and the quiver Brill-Noether loci are given by bounding the dimensions of $\Hom_{\C[x,y]\#\Gamma}(S, \C[x,y]/I)$, where $S$ runs over the set of irreducible representations of $\Gamma$, which lift to an irreducible representation of $\C[x,y]\#\Gamma$ by letting $x, y$ act by zero. 
\end{example}

\section{Compositions of correspondences}\label{sec: compositions-correspondences}

In this section we discuss the relations between various spaces $\pnak{Q}{\dimvec^{(0)},\ldots,\dimvec^{(\ell)};\framvec}$ viewed as correspondences between quiver varieties, and compositions of such correspondences.

\begin{definition}
\label{def: excess}
Assume $A,B,C$ are smooth algebraic varieties and we have a commutative diagram:
\begin{equation}
\label{eq: excess siagram}
\begin{tikzcd}
 & \mathcal{M} \arrow{dr}\arrow{dl}& \\
 A\arrow{dr} & & B\arrow{dl}\\
 & C &
\end{tikzcd}
\end{equation}
We say that \eqref{eq: excess siagram} is a commutative square with excess $(R_1,R_2)$ if the following holds:
\begin{itemize}
\item[(a)] The fiber product $A\times_{C}B$ has excess bundle $\mathcal{F}$ of rank $R_1$ \cite[Chapter 6] {fulton_intersectionbook} and  
$\dim A\times_{C}B=\dim A+\dim B-\dim C+R_1$.
\item[(b)] There is a rank $R_2$ vector bundle $\mathcal{E}$ on the fiber product $A\times_{C}B$ with a section $s$ such that $\mathcal{M} \subset A \times_{C}B$ is the zero locus of $s$.
\item[(c)] $\dim \mathcal{M}=\dim A\times_{C}B-R_2$.
\end{itemize}

The goal of this section is to show that the diagram
\begin{equation}
\label{eq: composition}
\begin{tikzcd}
 & \pnak{Q}{\dimvec^{(0)},\dimvec^{(1)},\dimvec^{(2)};\framvec} \arrow{dr}{P_{+}}\arrow[swap]{dl}{P_-}& \\
 \pnak{Q}{\dimvec^{(0)},\dimvec^{(1)} ;\framvec}\arrow[swap]{dr}{P_+} & & \pnak{Q}{ \dimvec^{(1)},\dimvec^{(2)};\framvec}\arrow{dl}{P_-}\\
 & \nak_Q(\dimvec^{(1)};\framvec) &
\end{tikzcd}
\end{equation}
is a commutative square with excess $(R_1, R_2)$, where $R_1$ and $R_2$ are fixed integers that we define next. Define
\begin{equation}
 \uk=\dimvec^{(0)}-\dimvec^{(1)},\quad \uk'=\dimvec^{(1)}-\dimvec^{(2)},
 \end{equation}
\begin{equation}
\label{eq: def R1}
R_1=\sum_{v}\left(d^{(1)}_v-d^{(2)}_v\right)\left(d^{(0)}_v-d^{(1)}_v\right)=
\sum_{v}k_vk'_v
\end{equation}
and
\begin{equation}
\label{eq: def R2}
R_2=\sum_{v,w}\left(|Q_1(v,w)|+|Q_1(w,v)|\right) \left(d^{(1)}_v-d^{(2)}_v\right)\left(d^{(0)}_w-d^{(1)}_w\right)=\sum_{v,w}\left(|Q_1(v,w)|+|Q_1(w,v)|\right)k'_vk_w.
\end{equation}

\begin{theorem}\label{thm: excess}
The commutative square \eqref{eq: composition}
satisfies Definition \ref{def: excess} 
with excess $(R_1,R_2)$ defined by \eqref{eq: def R1} and \eqref{eq: def R2}.
\end{theorem}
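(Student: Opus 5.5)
The plan is to work with explicit linear-algebra models of all four spaces, modelled on the proofs of Lemma \ref{lem: BN projection 1} and Lemma \ref{lem: second projection}. Write $A=\pnak{Q}{\dimvec^{(0)},\dimvec^{(1)};\framvec}$, $B=\pnak{Q}{\dimvec^{(1)},\dimvec^{(2)};\framvec}$ and $C=\nak_Q(\dimvec^{(1)};\framvec)$. A point of $A\times_C B$ is then a stable $D^{(1)}$ together with two pieces of data:
\begin{itemize}
\item an extension $0\to K\to D^{(0)}\to D^{(1)}\to 0$ with $K$ trivial of dimension vector $\uk$ (the $P_+$-fibre data, a subspace of $\ker\widetilde M_2/\Imm \widehat M_1$ as in Lemma \ref{lem: fibers partial flags}(b));
\item a trivial subrepresentation $K'\subset D^{(1)}$ of dimension vector $\uk'$ (the $P_-$-fibre data, a subspace of $\ker M_1$).
\end{itemize}
These two choices are independent over $D^{(1)}$.

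\textbf{Step 1: $\mathcal{M}$ is the zero locus of a section (part (b)).} Let $\widetilde K'\subset D^{(0)}$ be the preimage of $K'$. A point of the fibre product lies in $\pnak{Q}{\dimvec^{(0)},\dimvec^{(1)},\dimvec^{(2)};\framvec}$ exactly when $\widetilde K'\oplus 0$ is trivial, i.e. when $A^{(0)},B^{(0)},j^{(0)}$ vanish on $\widetilde K'$.
\begin{itemize}
\item The condition on $j^{(0)}$ is automatic, because $j^{(0)}=j^{(1)}\pi$ and $j^{(1)}$ kills $K'$.
\item For an arrow $e:v\to w$, commutativity \eqref{eq:compatibility-1} and triviality of $K'$ give $\pi A^{(0)}_e(\widetilde K'_v)=A^{(1)}_e(K'_v)=0$. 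Hence $A^{(0)}_e|_{\widetilde K'_v}$ lands in $K_w$. It vanishes on $K_v$, so it descends to a map $K'_v\to K_w$, and similarly $B^{(0)}_e$ gives a map $K'_w\to K_v$.
\end{itemize}
These maps glue to a section $s$ of the tautological bundle
$$\mathcal E=\bigoplus_{e:v\to w}\Big(\mathcal{H}om(\CK'_v,\CK_w)\oplus\mathcal{H}om(\CK'_w,\CK_v)\Big),$$
whose rank is exactly $R_2$, and $\mathcal{M}=s^{-1}(0)$ scheme-theoretically. To make this precise I would present everything as $G$-quotients of the pre-quotient varieties $\prequot{Q}{\cdots}$ and check that the equations cut out by $s$ are precisely condition (3) of Definition \ref{def:prequotient}.

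\textbf{Step 2: the fibre product is smooth with excess $\mathcal F$ of rank $R_1$ (part (a)).} This is the main obstacle. Following the proof of Theorem \ref{thm: smooth}, I would write $A\times_C B$ as a free quotient of the locus of tuples $(\widehat X,\widehat Y,a,\widehat b)$ with $D^{(1)}=D^{(0)}/K$, together with a choice of $K'$, by a parabolic group. Its tangent space is then the homology of a three-term complex, whose first map is injective as in Theorem \ref{thm: smooth}. The point is to show that the excess $\mathrm{coker}(TA\oplus TB\to TC)$ has constant rank $R_1=\sum_v k_vk'_v$, and to identify it with the bundle $\bigoplus_v\mathcal{H}om(\CK_v,\CK'_v)^{*}$ (or a similar $\mathrm{Hom}$ between the two kernels).

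My first attempt would dualise via the symplectic form on $C$. By Proposition \ref{prop: isotropic}, the image of $TA$ in $TC$ is coisotropic-like and its symplectic annihilator is computable from the complex \eqref{eq: tangent-space}. The annihilator of $\Imm(TA)+\Imm(TB)$ should then be the space of $\xi\in\bigoplus_v\mathrm{Hom}(D^{(1)}_v,D^{(1)}_v)$ of the form $D^{(1)}\to D^{(1)}/K'\to\cdots$ factoring through $K$-type data. I would reduce this to $\bigoplus_v\mathrm{Hom}(K_v,K'_v)$ using the stability argument from the proof of surjectivity of $d\mu$ in Theorem \ref{thm: smooth}: an orthogonal element vanishing on $a(F)$ propagates to zero along $X,Y$. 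Constant rank of the cokernel gives a clean intersection, hence smoothness, and the dimension formula $\dim A+\dim B-\dim C+R_1$ follows.

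\textbf{Step 3: dimension count (part (c)).} Finally I would compare dimensions using the formulas already established. The dimension of $\pnak{Q}{\dimvec^{(0)},\dimvec^{(1)},\dimvec^{(2)};\framvec}$ is given by the general smoothness corollary with $\ell=2$. The dimensions of $A$ and $B$ come from \eqref{eq: dim BN correspondence new}, and that of $C$ from \eqref{eq: dim nakajima rephrased}, using $\uk^0[\dimvec^{(1)}]=\uk^0[\dimvec^{(0)}]-\Cartan\uk$. Expanding, the quadratic terms should combine so that the difference between $\dim A+\dim B-\dim C+R_1$ and $\dim\mathcal{M}$ is exactly $\uk'^T(2I-\Cartan)\uk=R_2$. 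This is a routine but sign-sensitive computation, and it confirms $\dim\mathcal M=\dim A\times_CB-R_2$, so $s$ is a regular section.
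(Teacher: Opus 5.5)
Your Steps 1 and 3 match the paper. $\mathcal{M}$ is cut out of $\mathcal{Z}=A\times_C B$ by the blocks $A_e^\star,B_e^\star$, which form a section of a rank $R_2$ bundle, and the dimension bookkeeping is Lemma~\ref{lem: dim triple}. The gap is in Step 2, which carries the real content of part (a).

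Knowing that $\mathrm{coker}(T_aA\oplus T_bB\to T_cC)$ has dimension $R_1$ at every point only tells you that all Zariski tangent spaces of $\mathcal{Z}$ have dimension $N:=\dim A+\dim B-\dim C+R_1$. That is only the upper bound $\dim_z\mathcal{Z}\le N$. It does not give a clean intersection or smoothness, since a non-reduced or too-small intersection can have constant-dimensional tangent spaces. For that you also need $\dim_z\mathcal{Z}\ge N$, whereas intersecting with $\Delta_C\subset C\times C$ only gives $\dim_z\mathcal{Z}\ge N-R_1$, and nothing in your sketch supplies the missing $R_1$. The symplectic-duality route is also unsupported. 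Proposition~\ref{prop: isotropic} only says $A$ is isotropic, and for a non-Lagrangian isotropic correspondence the image of $TA$ in $TC$ need not be coisotropic.

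The missing idea is the source of the excess. On the ambient space of block-form matrices \eqref{eq: block form}, $A_e,B_e$ kill $K$ and send $\widetilde K'$ into $K$, and $j$ kills $\widetilde K'$. There the moment map of $D^{(0)}$ vanishes on $\widetilde K'$ identically. Indeed, $A_eB_e$ and $B_eA_e$ kill $\widetilde K'$ because the inner map lands in $K$, and $ij$ kills it because $j$ does. So the $\star$-block of $\mu$, a copy of $\bigoplus_v\Hom(K'_v,K_v)$, never has to be imposed, and $R_1$ of the naive equations are redundant.

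The paper uses exactly this to get $\dim\ge N$ on every component. It then gets $\dim\le N$ from the Brill--Noether stratification of $\nak_Q(\dimvec^{(1)};\framvec)$ (Lemma~\ref{lem: dim fiber product} with Lemma~\ref{lem: dim triple}), not from tangent spaces.

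If you want to keep a tangent-space proof, the correct version is to take the target of $d\mu$ to be $\bigoplus_v\Hom(D^{(0)}_v/\widetilde K'_v,D^{(0)}_v)$ and rerun the orthogonal-complement argument of Theorem~\ref{thm: smooth}. Write the point as $(X,Y,a,b)$ and let $\gamma_v\in\Hom(D^{(0)}_v,D^{(0)}_v/\widetilde K'_v)$ be orthogonal to the image. The conditions become $\gamma_va_v\equiv0$, $\gamma_vY_e\equiv Y_e\gamma_w$ and $\gamma_wX_e\equiv X_e\gamma_v$ modulo $\widetilde K'$. Since $Y_e(\widetilde K'_w)\subset K_v$ and $X_e(\widetilde K'_v)\subset K_w$, the subspaces $\{\zeta:\gamma\zeta\in\widetilde K'\}$ are $(X,Y)$-stable and contain $a(F)$, so stability of $D^{(0)}$ forces $\gamma=0$. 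Hence $d\mu$ is surjective at stable points. This would give smoothness of $\mathcal{Z}$ of dimension $N$, with excess bundle the $\star$-block, which is stronger than what the paper proves.
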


In order to prove Theorem \ref{thm: excess} first we need to compute some dimensions.
\begin{lemma}
\label{lem: dim triple}
We have
$$
\dim \pnak{Q}{\dimvec^{(0)},\dimvec^{(1)},\dimvec^{(2)};\framvec}=\dim \pnak{Q}{\dimvec^{(0)},\dimvec^{(1)};\framvec}+\pnak{Q}{\dimvec^{(1)},\dimvec^{(2)};\framvec}-\dim \nak_Q\left(\dimvec^{(1)};\framvec\right)+R_1-R_2.
$$
\end{lemma}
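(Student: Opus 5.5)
This is a pure dimension count. I will use the closed formulas already established and check that the difference between the two sides equals $R_1-R_2$. Write $\dimvec=\dimvec^{(0)}$, $\uk=\dimvec^{(0)}-\dimvec^{(1)}$, $\uk'=\dimvec^{(1)}-\dimvec^{(2)}$, and $\uk+\uk'=\dimvec^{(0)}-\dimvec^{(2)}$.

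For the triple variety I use the general corollary on $\pnak{Q}{\dimvec^{(0)},\ldots,\dimvec^{(\ell)};\framvec}$ with $\ell=2$. Equivalently, by the remark after Lemma \ref{lem:stable-quotient}, it is the quotient of $\Rep^{\uk+\uk'}_{\double{\framed{Q}}}(\dimvec;\framvec)^{\st}_0$ by the parabolic subgroup of $(k_v,k'_v,d_v-k_v-k'_v)$ block upper triangular matrices. Theorem \ref{thm: smooth} then gives
$$
\dim \nak_Q(\dimvec;\framvec)-\framvec^T(\uk+\uk')+\dimvec^T\Cartan(\uk+\uk')-\sum_v\left(k_v^2+k_vk'_v+k_v'^2\right).
$$
The quadratic term is the parabolic correction: $\dim\Lie(P)$ exceeds $\dim\Hom$ of the $\widehat{D}$-targets by exactly these block sizes. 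I will compute it directly from the complex \eqref{eq: tangent-space} rather than trust the possibly garbled indices in the general corollary.

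For the two pair varieties I use \eqref{eq: dim BN correspondence}:
$$
\dim\pnak{Q}{\dimvec,\dimvec-\uk;\framvec}=\dim\nak_Q(\dimvec;\framvec)-\framvec^T\uk+\dimvec^T\Cartan\uk-\uk^T\uk,
$$
$$
\dim\pnak{Q}{\dimvec^{(1)},\dimvec^{(2)};\framvec}=\dim\nak_Q(\dimvec^{(1)};\framvec)-\framvec^T\uk'+(\dimvec-\uk)^T\Cartan\uk'-\uk'^T\uk'.
$$
Summing these two and subtracting $\dim\nak_Q(\dimvec^{(1)};\framvec)$ cancels the middle quiver variety. The result is
$$
\dim\nak_Q(\dimvec;\framvec)-\framvec^T(\uk+\uk')+\dimvec^T\Cartan(\uk+\uk')-\uk^T\Cartan\uk'-\sum_v(k_v^2+k_v'^2).
$$

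Comparing with the triple formula, the difference (triple minus this) is $\uk^T\Cartan\uk'-\sum_v k_vk'_v$. Expanding the Cartan matrix by \eqref{eq: def Cartan},
$$
\uk^T\Cartan\uk'=2\sum_v k_vk'_v-\sum_{v,w}\left(|Q_1(v,w)|+|Q_1(w,v)|\right)k_wk'_v.
$$
The difference is therefore $\sum_v k_vk'_v-R_2=R_1-R_2$, using \eqref{eq: def R1} and \eqref{eq: def R2} and the symmetry of the summand in $v,w$. This is the claimed identity.

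The main risk is the quadratic parabolic correction for $\ell=2$. I will verify it carefully by counting the dimensions in the complex \eqref{eq: tangent-space}. The middle term is
$$
\sum_e\left[(d_t-\kappa_t)d_s+(d_s-\kappa_s)d_t\right]+\sum_v(2d_v-\kappa_v)f_v,
$$
where $\kappa=\uk+\uk'$ and $s,t$ denote the source and target of $e$. The last term is $\sum_v(d_v-\kappa_v)d_v$. From these, the dimension of the Lie algebra of the three-block parabolic subgroup is subtracted, which is
$$
\sum_v\left(d_v(d_v-\kappa_v)+k_v^2+k_vk'_v+k_v'^2\right).
$$
Surjectivity of $d\mu$ and injectivity of $\jmath$ are proved exactly as in Theorem \ref{thm: smooth}. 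This confirms the triple formula, and the rest is the algebra above.
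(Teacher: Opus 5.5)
Your proof is correct and is essentially the paper's argument: you substitute the closed dimension formulas and the difference collapses to $\uk^T\Cartan\uk'-\sum_v k_vk'_v=R_1-R_2$. The only variation is how you get the triple dimension: the paper adds the Grassmannian fiber dimension $\sum_v k_vk'_v$ of the projection to $\pnak{Q}{\dimvec^{(0)},\dimvec^{(2)};\framvec}$ and works in the half-sum form of Lemma~\ref{lem: less than half}, whereas you read it off the three-block parabolic complex; this is the paper's general corollary with $\ell=2$, whose indices are in fact fine and give the same correction $\sum_v(k_v^2+k_vk'_v+k_v'^2)$.
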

\begin{proof}
By Lemma \ref{lem: less than half} we get
$$
\dim  \pnak{Q}{\dimvec^{(0)},\dimvec^{(1)} ;\framvec}=\frac{1}{2}\left(\dim \nak_{Q}(\dimvec^{(0)};\framvec)+\dim \nak_{Q}(\dimvec^{(1)};\framvec)\right)-\sum_{v,w}|Q_1(v,w)|k_vk_w,
$$
$$
\dim  \pnak{Q}{\dimvec^{(1)},\dimvec^{(2)} ;\framvec}=\frac{1}{2}\left(\dim \nak_{Q}(\dimvec^{(1)};\framvec)+\dim \nak_{Q}(\dimvec^{(2)};\framvec)\right)-\sum_{v,w}|Q_1(v,w)|k'_vk'_w,
$$
and
\begin{multline*}
\dim  \pnak{Q}{\dimvec^{(0)},\dimvec^{(1)},\dimvec^{(2)} ;\framvec}=\sum_{v}k_vk'_v+\frac{1}{2}\left(\dim \nak_{Q}(\dimvec^{(0)};\framvec)+\dim \nak_{Q}(\dimvec^{(2)};\framvec)\right)-\\
\sum_{v,w}|Q_1(v,w)|(k_v+k'_v)(k_w+k'_w).
\end{multline*}
The extra term $\sum_{v}k_vk'_v$ is the dimension of the fiber of the projection $$\pnak{Q}{\dimvec^{(0)},\dimvec^{(1)},\dimvec^{(2)} ;\framvec}\to \pnak{Q}{\dimvec^{(0)},\dimvec^{(2)} ;\framvec}.$$
This means that
\begin{multline*}
\dim  \pnak{Q}{\dimvec^{(0)},\dimvec^{(1)},\dimvec^{(2)} ;\framvec}=\dim  \pnak{Q}{\dimvec^{(0)},\dimvec^{(1)} ;\framvec}+\dim  \pnak{Q}{\dimvec^{(1)},\dimvec^{(2)} ;\framvec}-\dim \nak_Q(\dimvec^{(1)};\framvec)+\\
\sum_{v}k_vk'_v-\sum_{v,w}|Q_1(v,w)|(k_vk'_w+k'_vk_w).
\end{multline*}
Furthermore,
$$
\sum_{v,w}|Q_1(v,w)|(k_vk'_w+k'_vk_w)=\sum_{v,w}(|Q_1(v,w)|+|Q_1(w,v)|)k'_vk_w=R_2,
$$
and the result follows.
\end{proof}

Next, we consider the projection $P$ from the fiber product of $\pnak{Q}{\dimvec^{(0)},\dimvec^{(1)} ;\framvec}$ and $\pnak{Q}{ \dimvec^{(1)},\dimvec^{(2)};\framvec}$ to $\nak_Q\left(\dimvec^{(1)};\framvec\right)$.

\begin{lemma}
\label{lem: dim fiber product}
We have 
$$
\dim P^{-1}\left(\BN^{=\ur}_Q(\dimvec^{(1)};\framvec)\right)\le \dim \pnak{Q}{\dimvec^{(0)},\dimvec^{(1)},\dimvec^{(2)} ;\framvec}+R_2
$$
with equality for $\ur=\max\left(\uk',\uk+\uk^0\left[\dimvec^{(1)}\right]\right)$.
\end{lemma}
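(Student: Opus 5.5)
Stratify $\nak_Q(\dimvec^{(1)};\framvec)$ by the loci $\BN^{=\ur}_Q(\dimvec^{(1)};\framvec)$ and compute, over each stratum, the dimensions of the fibers of both factors of the fiber product. The fiber of $P$ over a point $D\in\BN^{=\ur}_Q(\dimvec^{(1)};\framvec)$ is the product of the fiber of $P_{+}\colon \pnak{Q}{\dimvec^{(0)},\dimvec^{(1)};\framvec}\to\nak_Q(\dimvec^{(1)};\framvec)$ and the fiber of $P_{-}\colon \pnak{Q}{\dimvec^{(1)},\dimvec^{(2)};\framvec}\to\nak_Q(\dimvec^{(1)};\framvec)$ over $D$.

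By Lemma \ref{lem: BN projection 1}, applied with $\dimvec^{(1)}$ and $\uk'$, the $P_{-}$ fiber is $\prod_v\Gr(k'_v,r_v)$. By Lemma \ref{lem: second projection}, with $\dimvec=\dimvec^{(0)}$ and $\dimvec-\uk=\dimvec^{(1)}$, the $P_{+}$ fiber is $\prod_v\Gr\bigl(k_v,r_v-k^0_v[\dimvec^{(1)}]\bigr)$. So the stratum is nonempty in the fiber product only if $r_v\ge k'_v$ and $r_v\ge k_v+k^0_v[\dimvec^{(1)}]$. By Theorem \ref{thm: BN projection 2} (applicable since $\ur\ge \uk^0[\dimvec^{(1)}]$ by Lemma \ref{lem: k bound}), $\dim\BN^{=\ur}=\dim\nak_Q(\dimvec^{(1)};\framvec)-\sum_v r_v(r_v-k^0_v[\dimvec^{(1)}])$. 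Writing $k^0_v:=k^0_v[\dimvec^{(1)}]$, this gives
\[
\dim P^{-1}(\BN^{=\ur})=\dim\nak_Q(\dimvec^{(1)};\framvec)-\sum_v r_v(r_v-k^0_v)+\sum_v k'_v(r_v-k'_v)+\sum_v k_v(r_v-k^0_v-k_v).
\]

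Next I compare this with $\dim\pnak{Q}{\dimvec^{(0)},\dimvec^{(1)},\dimvec^{(2)};\framvec}+R_2$. By Lemma \ref{lem: dim triple}, this equals $\dim\pnak{Q}{\dimvec^{(0)},\dimvec^{(1)};\framvec}+\dim\pnak{Q}{\dimvec^{(1)},\dimvec^{(2)};\framvec}-\dim\nak_Q(\dimvec^{(1)};\framvec)+R_1$. I expand the two correspondence dimensions via \eqref{eq: dim BN correspondence new}: for the second use $\dim\nak_Q(\dimvec^{(1)})-\sum_v k'_v(k'_v-k^0_v)$. For the first, the proof of Lemma \ref{lem: second projection dim preimage} already gives $\dim\pnak{Q}{\dimvec^{(0)},\dimvec^{(1)}}=\dim\nak_Q(\dimvec^{(1)})+\sum_v k_v k^0_v[\dimvec^{(1)}]+\sum_v k_v^2$ after rewriting $\uk^0[\dimvec^{(0)}]=\uk^0[\dimvec^{(1)}]+\Cartan\uk$; I will verify this algebra. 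The target is then $\dim\nak_Q(\dimvec^{(1)})+\sum_v\bigl(k_vk^0_v+k_v^2-k'^2_v+k'_vk^0_v+k_vk'_v\bigr)$.

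Subtracting, the difference (target minus stratum preimage) should simplify to
\[
\sum_v (r_v-k'_v)(r_v-k_v-k^0_v),
\]
and this simplification is the main computational step to check carefully. Granting it, each factor is nonnegative by the nonemptiness constraints above, so the inequality follows. Equality holds exactly when, for every $v$, either $r_v=k'_v$ or $r_v=k_v+k^0_v$; the minimal admissible choice $r_v=\max(k'_v,k_v+k^0_v)$ satisfies this. For equality to be meaningful I must also check that this stratum is nonempty, i.e.\ that the preimage is nonempty. I would argue this via Theorem \ref{thm: BN} and Lemma \ref{lem: second projection dim preimage}: the generic point of $\pnak{Q}{\dimvec^{(0)},\dimvec^{(1)}}$ maps into $\BN^{=\max(\uk+\uk^0,0)}$. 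Intersecting with the open dense locus $\BN^{=\max(\cdot)}$ inside the irreducible (Theorem \ref{thm: irreducible}) locus $\BN^{\uk'}$ should give nonemptiness of the stratum $\max(\uk',\uk+\uk^0)$. This nonemptiness is the step I expect to be the real obstacle, and I would handle it by irreducibility of the relevant Brill–Noether locus together with density of its maximal-generic stratum.
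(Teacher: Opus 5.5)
Your strategy is the paper's. You stratify by $\BN^{=\ur}_Q(\dimvec^{(1)};\framvec)$, multiply the Grassmannian fibers from Lemmas \ref{lem: BN projection 1} and \ref{lem: second projection}, use Theorem \ref{thm: BN projection 2} for the stratum dimension, and show the defect is $\sum_v(r_v-k'_v)(r_v-k_v-k^0_v[\dimvec^{(1)}])\ge 0$. Routing through Lemma \ref{lem: dim triple}, instead of comparing $\pnak{Q}{\dimvec^{(0)},\dimvec^{(1)};\framvec}$ directly with $\pnak{Q}{\dimvec^{(0)},\dimvec^{(1)},\dimvec^{(2)};\framvec}$ as the paper does, is only bookkeeping.

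However, the one formula you wrote for the key step has the wrong sign. The correct identity is
\[
\dim\pnak{Q}{\dimvec^{(0)},\dimvec^{(1)};\framvec}=\dim\nak_Q(\dimvec^{(1)};\framvec)-\sum_v k_v\bigl(k_v+k^0_v[\dimvec^{(1)}]\bigr).
\]
As a check, for the Jordan quiver with $f=1$ and $k=2$ this gives $2d-6$, while your version gives $2d-2$. So the target should be $\dim\nak_Q(\dimvec^{(1)};\framvec)+\sum_v\bigl(k'_vk^0_v+k_vk'_v-k_vk^0_v-k_v^2-k'^2_v\bigr)$. With the target as you wrote it, the difference would carry an extra $2\sum_v k_v(k_v+k^0_v)$, so the simplification you ``grant'' would fail. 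With the corrected sign the difference is exactly $\sum_v(r_v-k'_v)(r_v-k_v-k^0_v)$, and your inequality and equality analysis go through.

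On nonemptiness of the equality stratum, which the paper leaves implicit: your sketch via irreducibility does not work as stated. Dense opens of two different closed subsets need not meet, and irreducibility is not needed anyway. Argue directly instead:
\begin{enumerate}
\item The image of the fiber product in $\nak_Q(\dimvec^{(1)};\framvec)$ is $\BN^{\max(\uk+\uk^0[\dimvec^{(1)}],0)}_Q\cap\BN^{\uk'}_Q=\BN^{\ur}_Q(\dimvec^{(1)};\framvec)$, where $\ur=\max(\uk',\uk+\uk^0[\dimvec^{(1)}])$.
\item This $\ur$ satisfies $\ur\ge\max(\uk^0[\dimvec^{(1)}],0)$.
\item Hence Theorem \ref{thm: BN}(a) gives $\BN^{=\ur}_Q(\dimvec^{(1)};\framvec)\neq\emptyset$ whenever the fiber product is nonempty.
\end{enumerate}
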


\begin{proof}
If the Brill-Noether locus is non-empty then we must have 
$$
r_v\ge k'_v, r_v\ge k_v+k^0_v[\dimvec^{(1)}].
$$
The fiber of $P$ is the product of fibers of $P_+$ and $P_-$, so by Lemmas \ref{lem: dim preimage} and \ref{lem: second projection dim preimage} we get
$$
\dim P^{-1}\left(\BN^{=\ur}_Q(\dimvec^{(1)};\framvec)\right)=\dim \BN^{=\ur}_Q(\dimvec^{(1)};\framvec)+\sum_{v}k'_v(r_v-k'_v)+\sum_v k_v\left(r_v-k_v-k^0_v\left[\dimvec^{(1)}\right]\right)=
$$
\begin{equation}
\label{eq:  fiber product step 1}
\dim \pnak{Q}{\dimvec^{(0)},\dimvec^{(1)};\framvec}-\sum_v r_v\left(r_v-k_v-k^0_v\left[\dimvec^{(1)}\right]\right)+\sum_{v}k'_v(r_v-k'_v).
\end{equation}
The last equation follows from the proof of Lemma \ref{lem: second projection dim preimage}.
Next, we need to relate the dimensions of $\pnak{Q}{\dimvec^{(0)},\dimvec^{(1)};\framvec}$ and $\pnak{Q}{\dimvec^{(0)},\dimvec^{(1)},\dimvec^{(2)};\framvec}$. Recall that
$$
\dim \pnak{Q}{\dimvec^{(0)},\dimvec^{(1)};\framvec}=\dim \nak_Q(\dimvec^{(0)};\framvec)-\sum_v k_v\left(k_v-k^0_v\left[\dimvec^{(0)}\right]\right),
$$
$$
\dim \pnak{Q}{\dimvec^{(0)},\dimvec^{(1)},\dimvec^{(2)};\framvec}=\dim \nak_Q(\dimvec^{(0)};\framvec)-\sum_v (k_v+k'_v)\left(k_v+k'_v-k^0_v\left[\dimvec^{(0)}\right]\right)+\sum_v k_vk'_v,
$$
therefore
$$
\dim \pnak{Q}{\dimvec^{(0)},\dimvec^{(1)},\dimvec^{(2)};\framvec}=\dim \pnak{Q}{\dimvec^{(0)},\dimvec^{(1)};\framvec}-\sum_v k'_v\left(k'_v-k^0_v\left[\dimvec^{(0)}\right]\right)-\sum_v k_vk'_v.
$$
This means that we can rewrite \eqref{eq:  fiber product step 1} as 
\begin{multline*}
\dim \pnak{Q}{\dimvec^{(0)},\dimvec^{(1)},\dimvec^{(2)};\framvec}+\sum_v k'_v\left(k'_v-k^0_v\left[\dimvec^{(0)}\right]\right)+\sum_v k_vk'_v+\\
\sum_{v}k'_v(r_v-k'_v)-\sum_v r_v\left(r_v-k_v-k^0_v\left[\dimvec^{(1)}\right]\right)=
\end{multline*}
\begin{equation}
\label{eq: fiber product step 2}
\dim \pnak{Q}{\dimvec^{(0)},\dimvec^{(1)},\dimvec^{(2)};\framvec}-\sum_v (r_v-k'_v)\left(r_v-k_v-k^0_v\left[\dimvec^{(1)}\right]\right)+\sum_v k'_v\left(2k_v-k^0_v\left[\dimvec^{(0)}\right]+k^0_v\left[\dimvec^{(1)}\right]\right).
\end{equation}
Now $\uk^0\left[\dimvec^{(1)}\right]=\uk^0\left[\dimvec^{(0)}\right]-\Cartan \uk$, so 
$$
\sum_v k'_v\left(2k_v-k^0_v\left[\dimvec^{(0)}\right]+k^0_v\left[\dimvec^{(1)}\right]\right)=\sum_{v,w}\left(|Q_1(v,w)|+|Q_1(w,v)|\right)k'_vk_w=R_2,
$$
so we get
$$
\dim \pnak{Q}{\dimvec^{(0)},\dimvec^{(1)},\dimvec^{(2)};\framvec}+R_2-\sum_v (r_v-k'_v)\left(r_v-k_v-k^0_v\left[\dimvec^{(1)}\right]\right)
$$
and the result follows.
\end{proof}

We are now ready to prove Theorem \ref{thm: excess}.

\begin{proof}[Proof of Theorem \ref{thm: excess}]
Let $\mathcal{Z}$ denote the fiber product $\pnak{Q}{\dimv{0}, \dimv{1}; \framvec} \times_{\pnak{Q}{\dimv{1}; \framvec}} \pnak{Q}{\dimv{1}, \dimv{2}; \framvec}$. It parametrizes flags of representations
$$
D^{(0)}\xrightarrow{\pi^{(0)}}D^{(1)}\xrightarrow{\pi^{(1)}} D^{(2)}
$$
such that $\pi^{(0)},\pi^{(1)}$ are surjective and commute with $A_e,B_e,i_v,j_v$ in appropriate sense. Furthermore, we require that $A_e,B_e,j_v$ vanish on $\ker\pi^{(0)}$ (as endomorphisms of $D^{(0)}$) and on $\ker \pi^{(1)}$ (as endomorphisms of $D^{(1)}$).   
This means that we have maps $A_e,B_e:D^{(1)}\to D^{(0)}$ which send $\ker \pi^{(1)}$ to $\ker \pi^{(0)}$. 

Note that $\pnak{Q}{\dimv{0}, \dimv{1}; \framvec}$ is cut out inside $\mathcal{Z}$ by the condition that $A_e, B_e, j_v$ vanish on $\ker(\pi^{(1)}\circ \pi^{(0)})$ (as endomorphisms of $D^{(0)}$). We note that $j_v$ automatically vanishes on $\ker(\pi^{(1)}\circ \pi^{(0)})$. Indeed, we have the following commutative diagram
\begin{center}
\begin{tikzcd}
D^{(0)}_v \arrow{rr}{j^{(0)}_v} \arrow[twoheadrightarrow, swap]{d}{\pi^{(0)}_v} & & F_v \\
D^{(1)}_v \arrow{urr}{j^{(1)}_v} \arrow[twoheadrightarrow, swap]{d}{\pi^{(1)}_v}& & \\
D^{(2)}_v \arrow{uurr}{j^{(2)}_v} & &
\end{tikzcd}
\end{center}
and it follows immediately that $j^{(0)}_v(\ker(\pi^{(1)}_v\circ \pi_v^{(0)})) = 0$. Thus, $\pnak{Q}{\dimv{0}, \dimv{1}; \framvec}$ is cut out inside $\mathcal{Z}$ only by the condition that $A_e, B_e$ vanish on $\ker(\pi^{(1)}\circ \pi^{(0)})$. 

As in Section \ref{sec: geometry}, we can describe $\mathcal{Z}$ explicitly by matrices $A_e,B_e$ which have the following block form:
\begin{equation}
\label{eq: block form}
\left(
\begin{tabular}{c|c|c} 
\quad \;\;\;\;  ? \qquad \qquad& \qquad 0 \qquad \qquad & \qquad 0 \qquad  \qquad\\
 \hline ?
 & \qquad 0 \qquad \qquad &  \qquad 0 \qquad \qquad\\
 \hline ?
  & \qquad $\star$ \qquad \qquad& \qquad 0 \qquad \qquad\\
\end{tabular}
\right)
\end{equation}
where columns are grouped in blocks of size $d^{(2)}_v,k'_v,k_v$ and rows are grouped in blocks of size $d^{(2)}_w,k'_w,k_w$ respectively. The $(k'_v\times k_w)$ block denoted by $\star$ defines a map $A_e^{\star}:\ker \pi^{(1)}\to \ker \pi^{(0)}$ (respectively $B_e^{\star}$).

The matrices $A_e,B_e, j_v, i_v$ should satisfy the moment map equation \eqref{eq:moment-map-vertex}
which also has the block form \eqref{eq: block form} but one can check, using the fact that $j_v$ already vanishes on $\ker(\pi^{(1)}\circ \pi^{(0)})$, that the block marked by $\star$ already vanishes. 

As a result, for each vertex $v$ we have $k_vk'_v$ fewer moment map equations than what one would expect from combining moment map equations for  $\pnak{Q}{\dimvec^{(0)},\dimvec^{(1)} ;\framvec}$ and $\pnak{Q}{ \dimvec^{(1)},\dimvec^{(2)};\framvec}$. In other words, the $\star$-marked block of the moment map equation forms an excess bundle $F$ of rank $\sum_v k_vk'_v=R_1$.

This implies that all components of $\mathcal{Z}$ have dimension at least 
$$
\dim \pnak{Q}{\dimvec^{(0)},\dimvec^{(1)};\framvec}+\pnak{Q}{\dimvec^{(1)},\dimvec^{(2)};\framvec}-\dim \nak_Q\left(\dimvec^{(1)};\framvec\right)+R_1.
$$
On the other hand, by Lemmas \ref{lem: dim triple} and \ref{lem: dim fiber product} there can be no components of larger dimension. 

Next, we note that $\pnak{Q}{\dimvec^{(0)},\dimvec^{(1)};\framvec}$ is cut out inside $\mathcal{Z}$ by the equations $A_e^{\star}=B_e^{\star}=0$, which is indeed equivalent to $A_e, B_e$ vanishing on $\ker(\pi^{(1)}\circ \pi^{(0)})$. We can write $\ker\pi^{(0)}=\bigoplus_v K_v, \ker\pi^{(1)}=\bigoplus_v K'_v$  and regard $K_v$ and $K'_v$ as vector bundles on the fiber product. Then the maps $A_e^{\star},B_e^{\star}$ define sections of the vector bundle $(K'_v)^{\vee}\otimes K_w$, and altogether these add up to a section of a vector bundle $\mathcal{E}$ of rank $R_2$ which cuts out  $\pnak{Q}{\dimvec^{(0)},\dimvec^{(1)},\dimvec^{(2)};\framvec}$ from $\mathcal{Z}$.  By applying Lemmas \ref{lem: dim fiber product} and \ref{lem: dim triple} again, we conclude that $\pnak{Q}{\dimvec^{(0)},\dimvec^{(1)},\dimvec^{(2)};\framvec}$ has the correct codimension $R_2$ in $\mathcal{Z}$.
\end{proof}

\begin{example}
Let $Q$ be the Jordan quiver, $d^{(0)}=d,d^{(1)}=d-1$ and $d^{(2)}=d-2$. Then, 
$$\dim \pnak{Q}{d,d-1;1}=2d-2 \;\; \text{and} \;\; \dim \pnak{Q}{d-1,d-2;1}=2d-4.$$
The fiber product over $\nak_Q(d-1;1)$ parametrizes triples of ideals $(I_d\subset I_{d-1}\subset I_{d-2}\subset \C[x,y])$ such that $I_{d-1}/I_d$ and $I_{d-2}/I_{d-1}$ are supported at the origin.  

The fiber product then has ``naive" expected dimension $(2d-2)+(2d-4)-(2d-2)=2d-4$.
On the other hand, on the fiber product the matrices $X$ and $Y$ are block triangular, with $(n-2)\times (n-2)$ square block and $2\times 2$ triangular block. Both of these  $2\times 2$ triangular blocks have zeros on diagonal, which gives 4 equations. However, $2\times 2$ triangular nilpotent matrices automatically commute, so we lose one equation from the moment map. This means that the true (virtual) dimension of the fiber product is $2d-3$.
 
On the other hand, $\dim \pnak{Q}{d,d-2;1}=2d-6$, and $\pnak{Q}{d,d-1,d-2;1}$ is a $\mathbb{P}^1$ bundle over $\pnak{Q}{d,d-2;1}$ hence $\dim  \pnak{Q}{d,d-1,d-2;1}=2d-5$. The variety $\pnak{Q}{d,d-1,d-2;1}$ is cut out in the fiber product by two equations corresponding to the off-diagonal terms in $2\times 2$ blocks.
\end{example}

\end{definition}

\bibliographystyle{plain}
\bibliography{bibliography.bib}

\end{document}